\newtheorem{thm}{Theorem}[section]
\newtheorem{cor}[thm]{Corollary}
\newtheorem{lem}[thm]{Lemma}
\newtheorem{prop}[thm]{Proposition}
\theoremstyle{definition}
\theoremstyle{remark}
\newtheorem{rem}[thm]{Remark}
\numberwithin{equation}{section}
\newcommand{\R}{\mathbb R}
\newcommand{\Na}{\mathbb N}
\newcommand{\He}{\mathbb H}
\newcommand{\si}{\sigma}
\newcommand{\al}{\alpha}
\newcommand{\la}{\lambda}
\newcommand{\C}{{\mathbb C}}
\newcommand{\pa}{\partial }
\newcommand{\La}{\langle}
\newcommand{\Ra}{\rangle}
\newcommand{\Hc}{\mathcal H}
\newcommand{\sn}{{\mathbb S}^{n-1}}
\newcommand{\sd}{{\mathbb S}^{d-1}}
\newcommand{\om}{ \omega}
\newcommand{\D}{\Delta}
\newcommand{\Dk}{\Delta_{\kappa}}
\newcommand{\dl}{\delta }
\newcommand{\N}{\nabla }
\newcommand{\K}{\kappa }
\newcommand{\gm}{\gamma}
\newcommand{\Gm}{\Gamma}
\renewcommand{\Re}{\operatorname{Re}}
\title[Hardy inequalities for Dunkl--Hermite operators]{Hardy-type inequalities for fractional powers of the\\ Dunkl--Hermite operator}
\author[\'O. Ciaurri, L. Roncal, and S. Thangavelu]{\'Oscar Ciaurri, Luz Roncal and Sundaram Thangavelu}
\address[\'O. Ciaurri]{Departamento de Matem\'aticas y Computaci\'on\\
    Universidad de La Rioja\\
    26006 Logro\~no, Spain}
\email{oscar.ciaurri@unirioja.es}
\address[L. Roncal]{Departamento de Matem\'aticas y Computaci\'on\\
    Universidad de La Rioja\\
    26006 Logro\~no, Spain and BCAM - Basque Center for Applied Mathematics\\
 48009 Bilbao, Spain}
\email{lroncal@bcamath.org}
\address[S. Thangavelu]{Department of Mathematics\\
 Indian Institute of Science\\
560 012 Bangalore, India}
\email{veluma@math.iisc.ernet.in}
\keywords{Hardy inequality, Dunkl harmonic oscillator, fractional order operator, Laguerre expansions, heat semigroup}
\subjclass[2010]{Primary: 26A33.  Secondary: 33C45, 35A08, 42C10, 43A90}
\thanks{The first and second authors were
supported by grant MTM2015-65888-C04-4-P from Spanish Government. The second author is supported by the Basque Government through the BERC 2014-2017 program and by Spanish Ministry of Economy and Competitiveness MINECO: BCAM Severo Ochoa excellence accreditation SEV-2013-0323. The third author is supported by J.C. Bose Fellowship from D.S.T., Government of India}
\begin{document}

\maketitle

\begin{abstract}
We prove Hardy-type inequalities for a fractional Dunkl--Hermite operator which incidentally give  Hardy inequalities for the fractional harmonic oscillator as well. The idea is to use  $h$-harmonic expansions  to reduce the problem in  the Dunkl--Hermite context to the Laguerre setting. Then, we push forward a technique based on a non-local ground representation, initially developed by R. L. Frank, E. H. Lieb and R. Seiringer in the Euclidean setting, to get a Hardy inequality for the fractional-type Laguerre operator. The above-mentioned method is shown to be adaptable to an abstract setting, whenever there is a ``good'' spectral theorem and an integral representation for the fractional operators involved.
\end{abstract}
\section{Introduction and main results}

The original Hardy's inequality for the Laplacian on $ \R^d $ says that
$$
\frac{(d-2)^2}{4} \int_{\R^d} \frac{|f(x)|^2}{|x|^{2}} \, dx \leq  \int_{\R^d} |\N f(x)|^2 \, dx, \qquad d\ge3,
$$
where $\nabla$ stands for the gradient. The statement of Hardy's inequality can be generalized to other contexts and operators. Let $(X,d\eta)$ be a measure space, where $d\eta$ is a positive measure on $X$. Given $0<\si<1$, let us denote by $L^{\sigma}$ the fractional powers of a non-negative, self-adjoint operator $L$ on $L^2(X)$. We are interested in Hardy-type inequalities of the form
\begin{equation*}
\int_{X} \frac{|f(x)|^2}{(1+|x|^2)^\si} \, d\eta(x) \leq B_{\si}  \langle L^{\si}f, f\rangle,
\end{equation*}
for certain constant $B_{\si}$, or Hardy-type inequalities where the potential involved is homogeneous. Namely,
for $0<\si<1$, $f \in C_0^\infty(X) $ we are also concerned with inequalities of the type
\begin{equation}
\label{eq:HardyHomo}
\int_{X} \frac{|f(x)|^2}{|x|^{2\si}} \, d\eta(x) \leq C_{d,\si}   \langle L^{\si}f, f\rangle.
\end{equation}
When $(X,d\eta)=(\R^d,dx)$ and $L=-\Delta$, the sharp constant $ C_{d,\si} $ in \eqref{eq:HardyHomo} is already known \cite{B4, Herbst, Yafaev}. Nevertheless, R. L. Frank, E. H. Lieb, and R. Seiringer \cite{FLS} found a different proof of the inequality \eqref{eq:HardyHomo} in the Euclidean setting when $ 0 < \si < \min\{1, d/2\} $ by using a non-local version of the \textit{ground state representation}. Such representation improved the previous results in the sense that it provided quantitative information on the error by delivering a remainder term, see e.g. Theorem~\ref{thm:groundstate} in the general setting. Hardy inequalities and their generalizations are of relevant importance in mathematical analysis. They may be applied, among others, to problems concerning mathematical physics, partial differential equations, spectral theory, harmonic analysis, and potential theory.

The aim of this paper is two-fold. First, we revisit the technique developed by Frank--Lieb--Seiringer \cite{FLS} (see also \cite{FS}). We present an exposition of  such procedure in a systematic and unified way so that the method can be applied in more general settings. This description will be developed in Section \ref{sec:generalMethod}. Secondly, we will apply this method to get Hardy inequalities in several settings. In particular, we will prove Hardy-type inequalities for certain fractional Laguerre operators $L_{\al,\si}$, and from these we will deduce the Hardy inequalities for the corresponding fractional Dunkl--Hermite operator $\mathbf{H}_{\K,\si}$. The definitions of these operators can be found, respectively, in \eqref{eq:spectralLsi} and Subsection~\ref{subse:HardyHK} below. As an immediate consequence, we will also obtain Hardy inequalities for the fractional harmonic oscillator. Observe that in \cite{RT} the adapted method was already applied to obtain Hardy inequalities on the Heisenberg group.

We notice that a sharp Pitt's inequality for the fractional powers of the Dunkl operator in $L^2(\R^d)$ was recently proved by D. V. Gorbachev, V. I. Ivanov, and S. Yu. Tikhonov in \cite{GIT}. Such Pitt's inequality, which is a weighted norm inequality for the Dunkl transform on $ \R^d ,$ can be rewritten as a Hardy type inequality for fractional powers of the Dunkl-Laplacian $ \Delta_{\K}.$ Thus the inequality takes the form
$$ \int_{\R^d} \frac{|f(x)|^2}{|x|^{2\si}} \, h_\K(x) d(x) \leq B_{\si}  \langle (-\Delta_\K)^{\si}f, f\rangle$$
with a sharp constant $ B_\si.$
Our guess is that an expression for the error in such an inequality could be accomplished by using the ground state representation.

We will not work with the pure fractional powers of the operators under consideration in this paper, but with their \textit{conformally invariant} fractional powers, which will suit better for our purposes. As a consequence of the Hardy inequalities obtained for these operators, we will be able to deduce Hardy inequalities for the corresponding pure fractional powers. We borrow the terminology ``conformally invariant fractional powers'' from the context of  sublaplacian on Heisenberg groups, see \cite{RT}. As this group arises as the boundary of Siegel's upper half space, conformally invariant operators make perfect sense, see \cite{CG, GZ} for the exact definition of this invariance. The spectral theory of sublaplacian $ \mathcal{L} $ on the Heisenberg grop $ \He^d $  is closely related to Laguerre operators  of type $ (d-1) $. More precisely, the action of $ \mathcal{L} $ on functions of the form $ e^{it}f(|z|) $ is given by the action of the Laguerre operator $ L_{d-1}$ on $ f.$  Consequently,  when we consider the conformally invariant fractional power $ \mathcal{L}_\sigma $ acting on such functions, we obtain certain fractional powers of the Laguerre operator which we call conformally invariant.  The analogues of such operators for any $ \alpha > -\frac{1}{2}$  will be defined via spectral theorem and they will have a superficial resemblance with conformally invariant fractional powers of the sublaplacian on the Heisenberg group.

As it will be explained in Section \ref{sec:generalMethod}, some of the main features of the general way to proceed consist of having at hand both a ``good'' spectral theorem and an integral representation for the ground state representation. In the case of Dunkl--Hermite setting we do not have a convolution structure (neither we have an explicit formula for the fundamental solution, which seems to be playing an important role). So our idea is to use $h$-harmonics to reduce the Dunkl--Hermite setting to the Laguerre case, where we have a convolution structure, viz., the Laguerre convolution is at our disposal.  Moreover, in the latter setting we have ``good'' spectral theorem (in our case this means to have analogous results to those ones proved by M. Cowling and U. Haagerup in \cite[Section 3]{CH}, and slightly generalized by the second and third authors in \cite[Section 3]{RT}) and explicit fundamental solutions. This is why we will focus on the Laguerre differential operator and as an application we will deduce the results for the Dunkl--Hermite operator (which includes the ordinary Hermite operator as a particular case).

Now we state the Hardy inequalities for our operators. For $ \alpha > -1/2,$ the Laguerre differential operator defined by
$$
L_{\al}=-\frac{d^2}{dr^2}+r^2-\frac{2\al+1}{r}\frac{d}{dr}
$$
 is symmetric on $ L^2(0,\infty)$ taken with respect to the measure $d\mu_\al(r) = r^{2\al+1} dr$. For $ 0 < \sigma < 1 $ we use spectral theorem to define the conformally invariant fractional powers $ L_{\al,\sigma}$, see Section \ref{sec:HardyLaguerre} for the precise definition. For  $\dl>0$ and $\al>-1/2$, we let
\begin{equation}
\label{eq:weight}
w_{\al,\si}^{\dl}(r):=c_{\al,\si}(\dl+r^2)^{-(\al+1+\si)/2}K_{(\al+1+\si)/2}\big((\dl+r^2)/2\big),
\end{equation}
where $K_{\nu}$ is the Macdonald's function of order $\nu$ (see (\cite[Chapter 5, Section 5.7]{Lebedev} for the definition of such function), and $c_{\al,\si}$ to be the constant
\begin{equation}
\label{eq:calsi}
c_{\al,\si}:=\frac{\sqrt{\pi}2^{1-\si}}
{\Gamma[(\al+2+\si)/2]}.
\end{equation}
The first Hardy inequality concerns the fractional Laguerre operators $ L_{\al,\sigma}$. Let us define the constant $B_{\al,\si}^{\dl}$ by
\begin{equation}
\label{eq:Bconstant}
B_{\al,\si}^{\dl}:= \dl^{\si}\frac{\Gamma\big(\frac{\al+2+\si}{2}\big)}
{\Gamma\big(\frac{\al+2-\si}{2}\big)}.
\end{equation}
\begin{thm}[Hardy inequality for the Laguerre operator]
\label{thm:HardyLaguerre}
Let $0<\si<1$, $\dl>0$, and $\al>-1/2$. Then
$$
B_{\al,\si}^{\dl}\int_{0}^{\infty}\frac{|f(r)|^2}{(\dl+r^2)^{\si}}\,d\mu_{\al}(r)\le \frac{4^{\si}}{\dl^{\si}}(B^{\dl}_{\al,\si})^2 \int_{0}^{\infty}|f(r)|^2\frac{w_{\al,\si}^{\dl}(r)}{w_{\al,-\si}^{\dl}(r)}\,d\mu_{\al}(r)\le \langle L_{\al,\si} f,f\rangle_{d\mu_{\al}}
$$
for all functions $f\in C_0^{\infty}(0,\infty)$.
\end{thm}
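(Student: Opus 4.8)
The plan is to deduce both inequalities from the abstract ground state representation of Theorem~\ref{thm:groundstate}: the right-hand inequality will be an instance of that representation once we identify $w_{\al,\si}^{\dl}$ as a (virtual) ground state of $L_{\al,\si}$, while the left-hand inequality is a purely pointwise comparison of the two weights. I would treat the left-hand inequality first, since it is elementary and reveals why the seemingly unwieldy constants have been chosen.

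For the left-hand inequality, note that after dividing by $B_{\al,\si}^{\dl}$ the claim is equivalent to the pointwise bound
$$
\frac{1}{(\dl+r^2)^{\si}}\le \frac{4^{\si}}{\dl^{\si}}\,B_{\al,\si}^{\dl}\,\frac{w_{\al,\si}^{\dl}(r)}{w_{\al,-\si}^{\dl}(r)},\qquad r>0.
$$
Substituting the definition \eqref{eq:weight} of the weights and that of $c_{\al,\si}$ in \eqref{eq:calsi}, the ratio $w_{\al,\si}^{\dl}/w_{\al,-\si}^{\dl}$ carries a factor $(\dl+r^2)^{-\si}$ together with the ratio of Macdonald's functions $K_{(\al+1+\si)/2}/K_{(\al+1-\si)/2}$ of the common argument $(\dl+r^2)/2$. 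A direct computation shows that the numerical prefactor collapses,
$$
\frac{4^{\si}}{\dl^{\si}}\,B_{\al,\si}^{\dl}\,\frac{c_{\al,\si}}{c_{\al,-\si}}=1,
$$
so that after cancelling the common factor $(\dl+r^2)^{-\si}$ the inequality reduces to
$$
K_{(\al+1+\si)/2}\big((\dl+r^2)/2\big)\ge K_{(\al+1-\si)/2}\big((\dl+r^2)/2\big).
$$
This is immediate from the monotonicity of $K_{\nu}(t)$ in $\nu\ge0$ and the symmetry $K_{-\nu}=K_{\nu}$, since $|\al+1+\si|\ge|\al+1-\si|$ for $\al>-1/2$ and $0<\si<1$; the representation $K_{\nu}(t)=\int_0^{\infty}e^{-t\cosh s}\cosh(\nu s)\,ds$ makes this transparent.

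For the right-hand inequality I would invoke Theorem~\ref{thm:groundstate}. The essential step is to verify that $w:=w_{\al,\si}^{\dl}$ is a virtual ground state for $L_{\al,\si}$ with potential
$$
V(r)=\frac{4^{\si}}{\dl^{\si}}(B^{\dl}_{\al,\si})^2\,\frac{w_{\al,\si}^{\dl}(r)}{w_{\al,-\si}^{\dl}(r)},
$$
that is, $L_{\al,\si}w=V\,w$ in the appropriate (pointwise/weak) sense. Granting this, the ground state representation expresses $\langle L_{\al,\si}f,f\rangle_{d\mu_{\al}}-\int_0^{\infty}V|f|^2\,d\mu_{\al}$ as a manifestly nonnegative quantity, namely a weighted double integral of the squared difference of $f/w$ against the positive kernel supplied by the integral representation of $L_{\al,\si}$, weighted by $w(r)w(s)$; dropping this remainder yields exactly the right-hand inequality. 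To establish the ground state equation $L_{\al,\si}w=V\,w$ I would use the subordination (heat-semigroup) representation of $L_{\al,\si}$ afforded by the ``good'' spectral theorem, expressing $L_{\al,\si}$ through the Mehler-type kernel of $e^{-tL_{\al}}$ integrated against the appropriate power of $t$; the $\dl$-shifted Gaussian integrals then produce precisely the Macdonald functions appearing in \eqref{eq:weight}, and the ratio-of-Gamma normalisation \eqref{eq:Bconstant} accounts for the factor $B^{\dl}_{\al,\si}$.

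The main obstacle is exactly this identification of $w_{\al,\si}^{\dl}$ as the virtual ground state: one must compute the action of the conformally invariant fractional operator $L_{\al,\si}$ on the explicit Macdonald-function weight and recognise the outcome as $V\,w$. This is where the explicit fundamental solution and nontrivial identities for integrals of products of Macdonald's functions (together with their differentiation and recurrence relations) are needed, and where one must also be careful about convergence and the interchange of the operator with the subordination integral, since $w$ is smooth and positive but not compactly supported while the test functions $f$ lie in $C_0^{\infty}(0,\infty)$. Once the ground state equation is in place, both the representation of the quadratic form and the nonnegativity of the remainder follow from Theorem~\ref{thm:groundstate}, completing the proof.
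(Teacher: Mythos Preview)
Your treatment of the left-hand inequality is correct and is exactly what the paper does: reduce to the pointwise bound and use the monotonicity of $K_\nu(t)$ in $\nu\ge0$.

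For the right-hand inequality there is a genuine mis-identification. You take the ground state to be $w:=w_{\al,\si}^{\dl}$ and seek $L_{\al,\si}w=Vw$ with $V=A_{\al,\si}^{\dl}\,w_{\al,\si}^{\dl}/w_{\al,-\si}^{\dl}$. That identity is false. The function one must feed into Theorem~\ref{thm:groundstate} is $w_{\al,-\si}^{\dl}$ (note the sign): the paper shows
\[
L_{\al,\si}\,w_{\al,-\si}^{\dl}
=(4\dl)^{\si}\,\frac{\Gamma\!\big(\tfrac{\al+2+\si}{2}\big)^2}{\Gamma\!\big(\tfrac{\al+2-\si}{2}\big)^2}\,w_{\al,\si}^{\dl},
\]
so that $\widetilde{w}/w=A_{\al,\si}^{\dl}\,w_{\al,\si}^{\dl}/w_{\al,-\si}^{\dl}$, which is the weight in the Hardy inequality. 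If instead you apply $L_{\al,\si}$ to $w_{\al,\si}^{\dl}$, its Laguerre coefficients become $4^{\si}S_n^{\al,\si}c_{n,\dl}^{1/2}(\si)$, and using the relation between $c_{n,\dl}^{1/2}(\pm\si)$ one sees this is proportional to $(S_n^{\al,\si})^2 c_{n,\dl}^{1/2}(-\si)$, i.e.\ essentially $L_{\al,\si}^2 w_{\al,-\si}^{\dl}$, not $V\,w_{\al,\si}^{\dl}$. So the ground-state equation you propose cannot be verified.

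A second point concerns how the ground-state identity is actually established. Your plan is to push the heat-semigroup/subordination integral against the Macdonald weight and evaluate ``$\dl$-shifted Gaussian integrals''. The paper does \emph{not} compute $e^{-tL_\al}$ on $w_{\al,\pm\si}^{\dl}$ directly; rather it obtains the Laguerre expansion $w_{\al,\pm\si}^{\dl}=\sum_n c_{n,\dl}^{1/2}(\pm\si)\,\varphi_n^{\al}$ by recognising $w_{\al,\si}^{\dl}$ as the partial Fourier transform in $t$ of $u_{\si,\dl}(r,t)=((\dl+r^2)^2+t^2)^{-(\al+2+\si)/2}$ and reading off the coefficients from a Cowling--Haagerup type computation (Lemma~\ref{lem:CowHag}, Proposition~\ref{eq:LagseriesK}). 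The crucial step linking $c_{n,\dl}^{1/2}(\si)$ and $c_{n,\dl}^{1/2}(-\si)$ is an identity for confluent hypergeometric $U$-functions, $U(a,b,x)=x^{1-b}U(a-b+1,2-b,x)$ (Lemma~\ref{lem:connecting}), not a Macdonald-function integral. Once these coefficients are in hand, $L_{\al,\si}w_{\al,-\si}^{\dl}$ is computed purely spectrally and the ground-state representation (Theorem~\ref{thm:groundstateLag}) follows from the polarised form of Proposition~\ref{prop:integRep}, with $G=w_{\al,-\si}^{\dl}$ and $F=|f|^2/w_{\al,-\si}^{\dl}$.
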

The constant just after the first inequality above turns out to be sharp. Actually, the function $w_{\al,\si}^{\dl}$ is an optimizer, in view of Theorem \ref{prop:Lsweight}.

We use the results of the above theorem in order to prove the following Hardy inequality for fractional powers of the Dunkl-Hermite operators $ \mathbf{H}_{\K} = -\Delta_{\K} +|x|^2 $ on $ \R^d.$ Here $ \K $ is the multiplicity function defined on the  Coxeter group associated to a given root system and $ \Delta_{\K} $ is the Dunkl Laplacian on $ \R^d.$  We refer to Section \ref{sec:hardyDH} for the precise definition of these operators and their (conformally invariant) fractional powers $ \mathbf{H}_{\K,\si}.$
For a given Coxeter group $ W $  and a non-negative multiplicity function $ \K $ let $ \gm = \sum_{w \in W}  \K(w)$  and define the following constant
\begin{equation}
\label{eq:lambda0}
\lambda:=\frac{d}{2}+\gm-1.
\end{equation}
Details  about these notations and definitions for the theorem below can be found in Section~\ref{sec:hardyDH}.
\begin{thm}[Hardy inequality for the Dunkl--Hermite operator]
\label{thm:HardyDunkl-Hermite}
Let $0<\si<1$, $\dl>0$, and $\la$ be as in \eqref{eq:lambda0}. Then
$$
B_{\la,\si}^{\dl} \int_{\R^d}\frac{|f(x)|^2}{(\dl+|x|^2)^{\si}}\,h_{\K}(x)\,dx\le \langle \mathbf{H}_{\K,\si} f,f\rangle_{L^2(\R^d,h_{\K}) }
$$
for all functions $f\in C_0^{\infty}(\R^d)$, where $B_{\la,\si}^{\dl}$ is as in \eqref{eq:Bconstant}.
\end{thm}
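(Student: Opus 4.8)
The plan is to use the expansion of $f$ into $h$-harmonics to split $\mathbf{H}_{\K}$ into a family of radial Laguerre operators and then apply Theorem~\ref{thm:HardyLaguerre} in each sector. Writing $x = r\om$ with $r=|x|$ and $\om \in \sd$, recall that in polar coordinates the Dunkl Laplacian takes the form
$$
\Dk = \frac{\pa^2}{\pa r^2} + \frac{2\la+1}{r}\,\frac{\pa}{\pa r} + \frac{1}{r^2}\,\Dk^{0},
$$
where $\Dk^{0}$ is the spherical part and $2\la+1 = d-1+2\gm$, so that $\la$ is exactly the constant in \eqref{eq:lambda0}. Let $\{Y_{m,j}\}_j$ be an orthonormal basis, with respect to $h_\K(\om)\,d\om$ on $\sd$, of the $h$-harmonics of degree $m$, so that $\Dk^{0}Y_{m,j} = -m(m+2\la)Y_{m,j}$. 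Every $f\in C_0^\infty(\R^d)$ then admits the expansion
$$
f(r\om) = \sum_{m\ge 0}\sum_j r^m g_{m,j}(r)\,Y_{m,j}(\om).
$$

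The reason for factoring out $r^m$ is that, substituting $r^m g_{m,j}$ into $-\Dk + r^2$ and using the eigenvalue $-m(m+2\la)$, a direct computation shows that the singular term $m(m+2\la)/r^2$ cancels the lower-order contributions and yields
$$
\mathbf{H}_{\K}\big(r^m g_{m,j}(r)\,Y_{m,j}(\om)\big) = r^m\big(L_{\al_m}g_{m,j}\big)(r)\,Y_{m,j}(\om), \qquad \al_m := \la+m.
$$
Thus each sector reduces $\mathbf{H}_\K$ to the Laguerre operator $L_{\al_m}$ with the shifted parameter $\al_m = \la+m \ge \la$. Moreover, since $h_\K$ is homogeneous of degree $2\gm$, one has $h_\K(x)\,dx = r^{2\gm+d-1}\,dr\,h_\K(\om)\,d\om$, and because $2m+2\gm+d-1 = 2\al_m+1$ the radial measure of the $m$-th sector is precisely $d\mu_{\al_m}(r) = r^{2\al_m+1}\,dr$. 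Together with the orthonormality of the $Y_{m,j}$ this gives the Plancherel-type identities
$$
\int_{\R^d}|f(x)|^2\,h_\K(x)\,dx = \sum_{m,j}\|g_{m,j}\|_{d\mu_{\al_m}}^2, \qquad \int_{\R^d}\frac{|f(x)|^2}{(\dl+|x|^2)^\si}\,h_\K(x)\,dx = \sum_{m,j}\int_0^\infty \frac{|g_{m,j}(r)|^2}{(\dl+r^2)^\si}\,d\mu_{\al_m}(r).
$$

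Next I would use that the conformally invariant fractional power respects this decomposition. Since $\mathbf{H}_\K$ and $L_{\al_m}$ carry the same eigenvalues on corresponding eigenfunctions (total degree $n=2k+m$ produces the eigenvalue $4k+2\al_m+2$ in both pictures), the spectral definition of $\mathbf{H}_{\K,\si}$ acts on the $m$-th sector exactly as $L_{\al_m,\si}$, whence
$$
\langle \mathbf{H}_{\K,\si}f,f\rangle_{L^2(\R^d,h_\K)} = \sum_{m,j}\langle L_{\al_m,\si}\,g_{m,j},g_{m,j}\rangle_{d\mu_{\al_m}}.
$$
Applying the outer bound of Theorem~\ref{thm:HardyLaguerre} with $\al=\al_m$ to each summand bounds this below by $\sum_{m,j}B_{\al_m,\si}^{\dl}\int_0^\infty \frac{|g_{m,j}(r)|^2}{(\dl+r^2)^\si}\,d\mu_{\al_m}(r)$. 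Finally, the constant $B_{\al,\si}^\dl = \dl^\si\,\Gamma(\tfrac{\al+2+\si}{2})/\Gamma(\tfrac{\al+2-\si}{2})$ is nondecreasing in $\al$: its logarithmic derivative equals $\tfrac12\big(\psi(\tfrac{\al+2+\si}{2})-\psi(\tfrac{\al+2-\si}{2})\big)\ge 0$, since the digamma function $\psi$ is increasing on $(0,\infty)$. Hence $B_{\al_m,\si}^\dl \ge B_{\la,\si}^\dl$ for every $m\ge 0$, and inserting the second Plancherel identity produces the asserted inequality with the constant $B_{\la,\si}^\dl$.

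The main obstacle is the compatibility of the fractional operator with the $h$-harmonic decomposition: one must check that $\mathbf{H}_{\K,\si}$, as defined in Section~\ref{sec:hardyDH}, genuinely restricts to $L_{\al_m,\si}$ on each sector, i.e. that the gamma-ratio prescription defining the conformally invariant powers is consistent under the parameter shift $\la\mapsto \al_m=\la+m$ and the identification of the eigenvalues $4k+2\al_m+2$. A secondary technical point is that the radial components $g_{m,j}$ need not lie in $C_0^\infty(0,\infty)$, because the low-degree pieces need not vanish at $r=0$, so Theorem~\ref{thm:HardyLaguerre} must first be extended to the natural form domain by a density argument before it is applied termwise.
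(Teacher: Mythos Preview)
Your proposal is correct and follows essentially the same route as the paper: decompose $f$ into $h$-harmonics, reduce $\mathbf{H}_{\K,\si}$ on the $m$-th sector to $L_{\la+m,\si}$ acting on $g_{m,j}=r^{-m}f_{m,j}$, apply Theorem~\ref{thm:HardyLaguerre} termwise, and then use monotonicity of $B_{\al,\si}^{\dl}$ in $\al$ (the paper cites Yafaev's observation $\Gamma(m+y)/\Gamma(m+x)\ge\Gamma(y)/\Gamma(x)$ for $0<x\le y$, which is equivalent to your digamma argument). The ``main obstacle'' you flag is exactly what the paper addresses through its Hecke--Bochner identity for the Dunkl--Hermite projections (Theorem~\ref{HB-DHProjections}, proved via Mehler's formula and the Funk--Hecke formula) and the resulting heat-semigroup relation $T_t^{\K}f=\sum_{m,j}r^m T_{\la+m,t}((\cdot)^{-m}f_{m,j})Y_{m,j}^h$; plugging this into the integral representation coming from Lemma~\ref{lem:numerical1} yields the identification $\mathbf{H}_{\K,\si}f=\sum_{m,j}r^m\,L_{\la+m,\si}g_{m,j}\,Y_{m,j}^h$ that you anticipated from eigenvalue matching.
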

When $ \K = 0 $ the Dunkl-Hermite operator $ \mathbf{H}_{\K} $ reduces to the standard Hermite operator and hence  from Theorem \ref{thm:HardyDunkl-Hermite}, we immediately obtain the following result for  $H_{\si},$  the conformally invariant fractional power of the Hermite operator (or harmonic oscillator) $ H$.
\begin{cor}[Hardy inequality for the harmonic oscillator]
\label{cor:HardyHO}
Let $0<\si<1$ and $\dl>0$. Then
$$
B_{(d/2-1), \si}^{\dl}\int_{\R^d}\frac{|f(x)|^2}{(\dl+|x|^2)^{\si}}\,dx\le \langle H_\si f,f\rangle_{L^2(\R^d) }
$$
for all functions $f\in C_0^{\infty}(\R^d)$, where $B_{(d/2-1), \si}^{\dl}$ is as in \eqref{eq:Bconstant}.
\end{cor}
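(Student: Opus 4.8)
The plan is to specialize Theorem~\ref{thm:HardyDunkl-Hermite} to the trivial multiplicity function $\K = 0$ and to track the resulting simplification of every object appearing in that inequality. The crucial fact is that the Dunkl Laplacian $\Delta_{\K}$ degenerates to the ordinary Laplacian $\Delta$ when $\K$ vanishes, since the difference-differential operators defining $\Delta_{\K}$ reduce to the usual partial derivatives once all reflection terms are weighted by $\K = 0$. Consequently the Dunkl--Hermite operator $\mathbf{H}_{\K} = -\Delta_{\K} + |x|^2$ collapses to the standard harmonic oscillator $H = -\Delta + |x|^2$, and the Dunkl weight $h_{\K}$, being a product of powers $|\langle \alpha, x\rangle|^{2\K(\alpha)}$ over the positive roots, becomes identically $1$. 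Thus the measure $h_{\K}(x)\,dx$ reduces to Lebesgue measure $dx$, and $L^2(\R^d, h_{\K})$ becomes $L^2(\R^d)$.

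Next I would compute the parameter $\la$ under this specialization. By definition $\gm = \sum_{w \in W} \K(w) = 0$ when $\K = 0$, so \eqref{eq:lambda0} gives $\la = d/2 + \gm - 1 = d/2 - 1$. This is exactly the index appearing in the constant $B_{(d/2-1),\si}^{\dl}$ of the corollary, evaluated through formula \eqref{eq:Bconstant}; no separate recomputation of the constant is needed, as it is obtained simply by substituting $\la = d/2 - 1$ into the same closed-form expression.

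The remaining point to verify is that the conformally invariant fractional power $\mathbf{H}_{\K,\si}$ reduces, under $\K = 0$, to $H_{\si}$, the corresponding fractional power of the ordinary harmonic oscillator. Since both operators are defined via the spectral theorem (see Section~\ref{sec:hardyDH}), and since $\mathbf{H}_{\K}$ and $H$ literally coincide as operators when $\K = 0$, their spectral resolutions coincide, and hence so do the functional-calculus expressions defining the fractional powers. Therefore $\langle \mathbf{H}_{\K,\si} f, f\rangle_{L^2(\R^d, h_{\K})} = \langle H_{\si} f, f\rangle_{L^2(\R^d)}$ for every $f \in C_0^{\infty}(\R^d)$. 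Substituting all of these identifications into the inequality of Theorem~\ref{thm:HardyDunkl-Hermite} yields precisely the asserted inequality of the corollary. No genuine obstacle arises; the only care required is the bookkeeping of the spectral definitions to confirm that the fractional calculus is stable under the degeneration $\K \to 0$, so that the abstract construction of Section~\ref{sec:hardyDH} specializes consistently to the classical harmonic oscillator.
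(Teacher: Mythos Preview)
Your proposal is correct and follows exactly the paper's own reasoning: the paper states the corollary as an immediate consequence of Theorem~\ref{thm:HardyDunkl-Hermite} upon setting $\K=0$, which forces $h_\K\equiv 1$, $\gm=0$, $\la=d/2-1$, and $\mathbf{H}_{\K,\si}=H_\si$. There is nothing to add; your bookkeeping of the specialization matches the paper's one-line justification.
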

From the results just stated, we can deduce Hardy inequalities for pure fractional powers of the underlying operators. Define
\begin{equation}
\label{eq:Salpha}
S^{\al,\si}_n:=\frac{\Gamma\big(\frac{4n+2\al+2}{4}
+\frac{1+\si}{2}
\big)}
{\Gamma\big(\frac{4n+2\al+2}{4}+\frac{1-\si}{2}\big)}.
\end{equation}
With the same notation as in the previous theorems and corollary, let $U_{\si}$ be the operator defined as $U_{\si}:=L_{\al,\si}L_{\al}^{-\si}$. It can be shown that this operator is bounded, \and the operator norm $\|U_{\si}\|$ can be expressed explicitly.  Indeed, it is easy to see that
$$
\|U_\si\| = \sup_{n\geq 0} \big( (4n+2\al+2)^{-\si} 4^{\si}S_n^{\al,\si} \big).
$$
Further, by using Stirling's formula for the Gamma function, one can check that $\|U_{\si}\|\sim 1$, for $\alpha$ large enough.
Then we have a Hardy inequality for $L_{\al}^{\si}$ in the following corollary.
\begin{cor}
\label{cor:HardyLaguerre}
Let $0<\si<1$, $\dl>0$, and $\al>-1/2$. Then
$$
B_{\al,\si}^{\dl}\int_{0}^{\infty}\frac{|f(r)|^2}{(\dl+r^2)^{\si}}\,d\mu_{\al}(r)\le \frac{4^{\si}}{\dl^{\si}}(B^{\dl}_{\al,\si})^2 \int_{0}^{\infty}|f(r)|^2\frac{w_{\al,\si}^{\dl}(r)}{w_{\al,-\si}^{\dl}(r)}\,d\mu_{\al}(r)\le \|U_{\si}\| \langle L_{\al}^{\si} f,f\rangle_{d\mu_{\al}}
$$
for all functions $f\in C_0^{\infty}(0,\infty)$, where $B_{\al, \si}^{\dl}$ is as in \eqref{eq:Bconstant}.
\end{cor}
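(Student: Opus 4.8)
The plan is to deduce this corollary immediately from Theorem~\ref{thm:HardyLaguerre}. The first two members of the chain of inequalities are literally identical to those appearing there, so the only new ingredient required is the one-sided operator comparison
\[
\langle L_{\al,\si} f, f\rangle_{d\mu_{\al}} \le \|U_{\si}\|\,\langle L_{\al}^{\si} f, f\rangle_{d\mu_{\al}}, \qquad f\in C_0^{\infty}(0,\infty),
\]
after which the result follows by concatenating it onto the last inequality of Theorem~\ref{thm:HardyLaguerre}.

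To obtain this comparison I would exploit that both $L_{\al,\si}$ and $L_{\al}^{\si}$ are, by construction, functions of the single self-adjoint operator $L_{\al}$ via the spectral theorem. Hence they commute and are simultaneously diagonalized by the Laguerre functions $\{\varphi_n^{\al}\}_{n\ge 0}$, the eigenfunctions of $L_{\al}$ with eigenvalues $4n+2\al+2$. On the $n$-th eigenspace $L_{\al}^{\si}$ acts as multiplication by $(4n+2\al+2)^{\si}$, while $L_{\al,\si}$ acts as multiplication by $4^{\si}S_n^{\al,\si}$, in accordance with the definition of the conformally invariant power recalled in Section~\ref{sec:HardyLaguerre} and the Gamma-quotient \eqref{eq:Salpha}. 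Consequently $U_{\si}=L_{\al,\si}L_{\al}^{-\si}$ is diagonal in the same basis with eigenvalues $(4n+2\al+2)^{-\si}4^{\si}S_n^{\al,\si}$, so it is a bounded, non-negative, self-adjoint operator whose norm is precisely the supremum displayed just before the corollary.

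The core of the argument is then the elementary fact that a bounded non-negative self-adjoint operator obeys $U_{\si}\le \|U_{\si}\|\,I$ as quadratic forms. Writing $g:=L_{\al}^{\si/2}f$, which is legitimate because $f\in C_0^{\infty}(0,\infty)$ has rapidly decaying Laguerre coefficients and therefore belongs to the domain of every power of $L_{\al}$, and using that $U_{\si}$ commutes with $L_{\al}^{\si/2}$, I would compute
\[
\langle L_{\al,\si} f, f\rangle_{d\mu_{\al}} = \langle U_{\si} L_{\al}^{\si} f, f\rangle_{d\mu_{\al}} = \langle U_{\si} g, g\rangle_{d\mu_{\al}} \le \|U_{\si}\|\,\|g\|_{d\mu_{\al}}^{2} = \|U_{\si}\|\,\langle L_{\al}^{\si} f, f\rangle_{d\mu_{\al}}.
\]
Inserting this at the right end of the chain in Theorem~\ref{thm:HardyLaguerre} gives the corollary.

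I do not expect a serious obstacle here, since the boundedness of $U_{\si}$ and the explicit value of $\|U_{\si}\|$ are already supplied in the statement. The only points demanding a line of justification are the identification of the eigenvalues of $L_{\al,\si}$ with $4^{\si}S_n^{\al,\si}$, which is immediate from its spectral definition, and the verification that the spectral calculus above is valid on $C_0^{\infty}(0,\infty)$; both are routine. The mild point worth flagging is that $\|U_{\si}\|$ is the best possible constant achievable by this reduction scheme, being exactly the operator norm of $U_{\si}$: replacing it by anything smaller would fail on functions spectrally concentrated near the eigenspace that attains, or approaches, the supremum defining $\|U_{\si}\|$.
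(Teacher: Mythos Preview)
Your proposal is correct and matches the paper's approach: the corollary is stated there without a separate proof, being understood as immediate from Theorem~\ref{thm:HardyLaguerre} together with the definition $U_\si=L_{\al,\si}L_\al^{-\si}$ and its boundedness. Your spectral-theorem justification of the quadratic-form inequality $\langle L_{\al,\si}f,f\rangle\le\|U_\si\|\langle L_\al^\si f,f\rangle$ is exactly the intended one-line step.
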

Analogously, if we consider the operator $V_{\K,\si}:=\mathbf{H}_{\K,\si} \mathbf{H}_{\K}^{-\si} $, we get the corresponding result for the fractional powers of the Hermite--Dunkl operator
\begin{cor}
\label{cor:HardyDunkl-Hermite}
Let $0<\si<1$, $\dl>0$, and $\la$ be as in \eqref{eq:lambda0}. Then
$$
B_{\la,\si}^{\dl} \int_{\R^d}\frac{|f(x)|^2}{(\dl+|x|^2)^{\si}}\,h_{\K}(x)\,dx\le \|V_{\K,\si}\|\langle \mathbf{H}_{\K}^{\si} f,f\rangle_{L^2(\R^d,h_{\K}) }
$$
for all functions $f\in C_0^{\infty}(\R^d)$, where $B_{\la, \si}^{\dl}$ is as in \eqref{eq:Bconstant}.
\end{cor}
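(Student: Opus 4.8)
The plan is to deduce this corollary from Theorem~\ref{thm:HardyDunkl-Hermite} in exactly the way Corollary~\ref{cor:HardyLaguerre} is deduced from Theorem~\ref{thm:HardyLaguerre}, namely by trading the conformally invariant power $\mathbf{H}_{\K,\si}$ for the pure power $\mathbf{H}_{\K}^{\si}$ at the cost of the operator norm of $V_{\K,\si}=\mathbf{H}_{\K,\si}\mathbf{H}_{\K}^{-\si}$. The starting observation is that $\mathbf{H}_{\K,\si}$, $\mathbf{H}_{\K}^{\si}$, and hence $V_{\K,\si}$ are all defined through the spectral theorem as functions of the single self-adjoint operator $\mathbf{H}_{\K}$. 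Consequently they mutually commute, each is self-adjoint, and $V_{\K,\si}$ is given by the ratio of the two fractional-power multipliers evaluated on the spectrum of $\mathbf{H}_{\K}$, which is a positive quantity; in particular $V_{\K,\si}$ is a positive bounded operator.

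The core of the argument is then a one-line spectral estimate. Since $V_{\K,\si}$ and $\mathbf{H}_{\K}^{\si/2}$ commute and $\mathbf{H}_{\K}^{\si/2}$ is self-adjoint, I would write
$$
\langle \mathbf{H}_{\K,\si}f,f\rangle = \langle V_{\K,\si}\mathbf{H}_{\K}^{\si}f,f\rangle = \langle V_{\K,\si}\mathbf{H}_{\K}^{\si/2}f,\mathbf{H}_{\K}^{\si/2}f\rangle \le \|V_{\K,\si}\|\,\|\mathbf{H}_{\K}^{\si/2}f\|^2 = \|V_{\K,\si}\|\,\langle \mathbf{H}_{\K}^{\si}f,f\rangle,
$$
where the inequality is just the bound $\langle V g,g\rangle\le \|V\|\,\|g\|^2$ for the positive bounded operator $V=V_{\K,\si}$ applied to $g=\mathbf{H}_{\K}^{\si/2}f$. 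Chaining this with the conclusion of Theorem~\ref{thm:HardyDunkl-Hermite},
$$
B_{\la,\si}^{\dl}\int_{\R^d}\frac{|f(x)|^2}{(\dl+|x|^2)^{\si}}\,h_{\K}(x)\,dx \le \langle \mathbf{H}_{\K,\si}f,f\rangle \le \|V_{\K,\si}\|\,\langle \mathbf{H}_{\K}^{\si}f,f\rangle,
$$
gives the asserted inequality for every $f\in C_0^\infty(\R^d)$.

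The step that requires genuine work is establishing that $V_{\K,\si}$ is bounded with an explicitly controllable norm, and this is where the $h$-harmonic reduction underlying the whole paper enters. Expanding $f$ in $h$-harmonics decomposes $L^2(\R^d,h_{\K})$ into subspaces on which $\mathbf{H}_{\K}$ acts as a Laguerre operator $L_{\al}$ with parameter $\al$ determined by the degree of the $h$-harmonic together with $\gm$ and $d$; on each such subspace $V_{\K,\si}$ reduces to the operator $U_{\si}=L_{\al,\si}L_{\al}^{-\si}$ already analyzed. Hence $\|V_{\K,\si}\|$ equals the supremum over the admissible parameters of the norms $\|U_{\si}\| = \sup_{n\ge0}\big((4n+2\al+2)^{-\si}4^{\si}S_n^{\al,\si}\big)$, which is finite and, by the Stirling estimate quoted just before Corollary~\ref{cor:HardyLaguerre}, comparable to $1$. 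I would also record the routine but necessary domain remarks: for $f\in C_0^\infty(\R^d)$ the vector $\mathbf{H}_{\K}^{\si/2}f$ lies in $L^2(\R^d,h_{\K})$ and in the domain of $V_{\K,\si}$, so that every pairing above is finite and the formal manipulations with functions of $\mathbf{H}_{\K}$ are legitimate.
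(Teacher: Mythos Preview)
Your proposal is correct and follows exactly the route the paper intends: the corollary is stated immediately after introducing $V_{\K,\si}:=\mathbf{H}_{\K,\si}\mathbf{H}_{\K}^{-\si}$ as the analogue of $U_\si$, and the (implicit) proof is precisely the spectral comparison $\langle \mathbf{H}_{\K,\si}f,f\rangle\le \|V_{\K,\si}\|\langle \mathbf{H}_{\K}^{\si}f,f\rangle$ combined with Theorem~\ref{thm:HardyDunkl-Hermite}. Your additional remarks on boundedness of $V_{\K,\si}$ via the $h$-harmonic decomposition (or, equivalently, directly via Stirling on the spectral multiplier over the eigenvalues $2n+d+2\gm$) and on domain considerations simply make explicit what the paper leaves to the reader.
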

From the above, we also obtain the result for the fractional powers of the harmonic oscillator.
\begin{cor}
\label{corcor:HardyHO}
Let $0<\si<1$ and $\dl>0$. Then
$$
B_{(d/2-1), \si}^{\dl}\int_{\R^d}\frac{|f(x)|^2}{(\dl+|x|^2)^{\si}}\,dx\le \|V_{0,\si}\|\langle H^\si f,f\rangle_{L^2(\R^d,dx) }
$$
for all functions $f\in C_0^{\infty}(\R^d)$, where $B_{(d/2-1), \si}^{\dl}$ is as in \eqref{eq:Bconstant}.
\end{cor}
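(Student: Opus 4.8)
The plan is to obtain this statement as the special case $\K = 0$ of Corollary~\ref{cor:HardyDunkl-Hermite}, exactly in the same way that Corollary~\ref{cor:HardyHO} was deduced from Theorem~\ref{thm:HardyDunkl-Hermite}. First I would recall that when the multiplicity function vanishes identically, the Dunkl Laplacian $\Dk$ reduces to the ordinary Laplacian $\D$, so that the Dunkl--Hermite operator $\mathbf{H}_{\K} = -\Dk + |x|^2$ becomes the harmonic oscillator $H = -\D + |x|^2$. Consequently the spectral definition of the conformally invariant fractional power gives $\mathbf{H}_{\K,\si} = H_\si$ and $\mathbf{H}_{\K}^{\si} = H^{\si}$ when $\K = 0$, whence the operator $V_{\K,\si} = \mathbf{H}_{\K,\si}\mathbf{H}_{\K}^{-\si}$ specializes to $V_{0,\si} = H_\si H^{-\si}$, and its operator norm $\|V_{0,\si}\|$ is the one appearing on the right-hand side.

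Next I would track the remaining parameters. With $\K = 0$ one has $\gm = \sum_{w \in W} \K(w) = 0$, so the quantity in \eqref{eq:lambda0} becomes $\la = d/2 - 1$. Moreover the Dunkl weight satisfies $h_{\K} \equiv 1$ in this case, so that the measure $h_{\K}(x)\,dx$ collapses to Lebesgue measure $dx$ and the space $L^2(\R^d, h_{\K})$ coincides with $L^2(\R^d, dx)$. Substituting $\la = d/2 - 1$ into the constant $B_{\la,\si}^{\dl}$ of \eqref{eq:Bconstant} produces exactly $B_{(d/2-1),\si}^{\dl}$.

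Finally I would insert these specializations directly into the inequality of Corollary~\ref{cor:HardyDunkl-Hermite}, namely
$$
B_{\la,\si}^{\dl} \int_{\R^d}\frac{|f(x)|^2}{(\dl+|x|^2)^{\si}}\,h_{\K}(x)\,dx\le \|V_{\K,\si}\|\langle \mathbf{H}_{\K}^{\si} f,f\rangle_{L^2(\R^d,h_{\K}) },
$$
which upon setting $\K = 0$ becomes precisely the asserted bound with constant $B_{(d/2-1),\si}^{\dl}$ and operator norm $\|V_{0,\si}\|$. Since the parent statement holds for every $f \in C_0^\infty(\R^d)$ and the class $C_0^\infty(\R^d)$ is unaffected by the specialization, the conclusion follows for all such $f$.

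Because this is a pure specialization of an already-established inequality, there is no genuine analytic obstacle. The only point requiring care is the bookkeeping verification that each $\K$-dependent object — the operator $\mathbf{H}_{\K}$, its conformally invariant fractional power $\mathbf{H}_{\K,\si}$, the weight $h_{\K}$, and the constant $\la$ — degenerates to its Euclidean counterpart in a mutually compatible way, so that both sides of the inequality pass to the $\K = 0$ case simultaneously and the constants match up as claimed.
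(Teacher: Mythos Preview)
Your proposal is correct and matches the paper's own approach: the paper simply states that Corollary~\ref{corcor:HardyHO} follows ``from the above,'' i.e., by taking $\K=0$ in Corollary~\ref{cor:HardyDunkl-Hermite}. Your careful bookkeeping of how $\mathbf{H}_{\K}$, $h_{\K}$, $\la$, and $V_{\K,\si}$ specialize is exactly what is needed to justify this.
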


\begin{rem}
Observe that all the results above are stated for smooth functions with compact support in the corresponding spaces. However, they could be extended to suitable Sobolev spaces, as in \cite{FLS} or \cite{RT}. Since the procedure is quite standard we leave details to the interested reader.
\end{rem}

\begin{rem}
Uncertainty principles could also be deduced from the Hardy inequalities as in \cite[Corollary 1.7]{RT}.
\end{rem}

\begin{rem} In \cite{RT} the second and third authors have also obtained Hardy-type inequalities where the weight functions are homogeneous. We can therefore follow the same method in obtaining such inequalities for Laguerre and Dunkl--Hermite operators. However, as the resulting inequalities are not known to be sharp even in the case of the Heisenberg group we do not pursue them here. Definitively, a quick inspection shows that a Hardy-type inequality with homogeneous weight cannot be obtained simply by letting $\delta$ go to zero in the Hardy inequalities stated above. On the other hand, the Laguerre setting can be viewed as arising from the Heisenberg setting when the functions considered are radial in the first variable. In this way, the weight $w_{\alpha,\sigma}^{\delta}$ appears in connection with certain function $u_{\si,\dl}$ (see Subsection \ref{subsec:CH}) which is closely related to the function $u_{\lambda,\dl}(v,z)$ in \cite[p.530]{CH}. When $\delta$ goes to zero, $u_{\lambda,\dl}$ tends (in a distributional sense) to the kernel of the intertwining operator of A. W. Knapp and E. M. Stein \cite{KS}. In its turn, the non-homogeneous function $u_{\lambda,1}$ is connected with a Poisson-type kernel when considering groups of type $H$ which include the nilpotent components of the Iwasawa decomposition, see \cite[Section 3]{CH} (also \cite[Ch. IX Theorem 3.8]{Hel}).
\end{rem}

The outline of the paper is as follows. In Section \ref{sec:generalMethod} we describe  the general method to get a Hardy-type  inequality for  fractional powers of  operators in an abstract setting. Such a procedure is then applied to  Laguerre operators (see the proof of Theorem \ref{thm:HardyLaguerre} at the end of Section \ref{sec:HardyLaguerre}). We will then use the Hardy inequality in the Laguerre context to obtain a Hardy inequality for the corresponding fractional-type Dunkl--Hermite operator in Section \ref{sec:hardyDH}. As explained earlier, the idea is to use expansions in $h$-harmonics. Moreover, we will prove a Hecke--Bochner identity that allows us to write the Dunkl--Hermite projections in terms of Laguerre convolutions which plays a crucial role in the proof. Such identity is of independent interest. Once the reduction is achieved,  Theorem~\ref{thm:HardyDunkl-Hermite} will be proved at the end of Section \ref{sec:hardyDH}.

\section{The general method}
\label{sec:generalMethod}

In this section we consider a measure space $(X,d\eta)$, where $d\eta$ is a positive measure on a smooth Riemannian manifold $X$. The function spaces $L^p(X)$ are understoood to be taken with respect to $ d\eta.$ Let $A$ be a non-negative, self-adjoint operator on $L^2(X)$. Then, there is a unique resolution $E$ of the identity, supported on the spectrum of $A$ (which is a subset of $[0,\infty)$), so that the spectral resolution of $A$ is given by
$$
A = \int_0^\infty \lambda dE(\lambda),
$$
or, equivalently,
$$
\langle Af,g\rangle = \int_0^{\infty} \lambda  dE_{f,g}(\lambda), \quad f\in \operatorname{Dom}A, \,\,\, g\in L^2(X).
$$
Here, $dE_{f,g}(\lambda)$ is a regular  complex Borel measure of bounded variation concentrated on the spectrum of $A$, and we use the notation $\langle f,g\rangle=\int_Xf(x)\overline{g(x)}\,d\eta(x)$. The role of Plancherel formula will be played by
$$
\langle Af, Af\rangle = \int_0^\infty \lambda^2  dE_{f,f}(\lambda ).
$$

Let $0<\si<1$. In the most general formulation of the method, we will denote by $\Lambda_{\si}$ a fractional-type operator related to $A$. Our aim is to prove a Hardy´-type inequality of the form
\begin{equation}
\label{eq:hardy}
\langle\Lambda_\si f,f\rangle \ge C_{\si}\int_X h_\si(x)|f(x)|^2\,d\eta(x),
\end{equation}
where $h_{\si}(x)$ is an appropriate positive function, and we look for the explicit positive constant $C_{\si}$ to be sharp.

There are two steps to be furnished in the method we present to obtain the Hardy-type inequality:

\begin{enumerate}
\item To get and integral representation for $\Lambda_\si f$ and an explicit form for $\langle\Lambda_\si f,f\rangle$.
\item To write the ground state representation and use the expression for  $\langle\Lambda_\si f,f\rangle$ obtained previously.
\end{enumerate}

\subsection{Integral representation and the expression for  $\langle\Lambda_{\si}f,f\rangle$}

Let us assume that we have an integral representation of the following form for the operator $\Lambda_\si$, which is valid for all $f\in C_0^{\infty}(X)$:
\begin{equation}
\label{eq:integral-rep}
\Lambda_\si f(x)=a_{\si}\int_X\big(f(x)-f(y)\big)K_\si(x,y)\,d\eta(y)+f(x)B_{\si}(x),
\end{equation}
where  $K_\si(x,y)$ is a symmetric (in the sense that $K_\si(x,y)=K_\si(y,x)$), positive kernel not necessarily known  explicitly and  $a_{\si}$ is a positive constant, that depends on the kernel  and the dimension of the underlying  manifold $X$. Furthermore, $B_{\si}(x)$ is a continuous non-negative bounded function.

The next expression follows from the integral representation. We will also assume that the kernel $K_\si(x,y)$ is measurable and $\int_X\int_X|x-y|^2K_{\si}(x,y)\,dx\,dy<\infty$ to allow the interchange of the order of integration in the following computations.

\begin{lem}
\label{lem:bilinear}
Let $0<\si<1$ and assume that the representation \eqref{eq:integral-rep} is valid.  Then, for all $f\in C_0^{\infty}(X)$
\begin{equation}
\label{eq:bilinear-form}
\langle\Lambda_\si f,f\rangle = \frac{a_{\si}}{2}\int_X\int_X|f(x)-f(y)|^2K_\si(x,y)\,d\eta(x)\,d\eta(y)
+B_{\si}(x)\langle f,f\rangle.
\end{equation}

\end{lem}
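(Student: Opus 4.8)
The plan is to start directly from the integral representation \eqref{eq:integral-rep}, pair it against $f$, and then exploit the symmetry $K_\si(x,y)=K_\si(y,x)$ via a symmetrization (relabeling) trick. First I would substitute \eqref{eq:integral-rep} into the inner product and split off the multiplication term, obtaining
\[
\langle\Lambda_\si f,f\rangle = a_\si\int_X\int_X\big(f(x)-f(y)\big)\overline{f(x)}\,K_\si(x,y)\,d\eta(y)\,d\eta(x)+\int_X B_\si(x)|f(x)|^2\,d\eta(x),
\]
where the second term is precisely the quantity denoted $B_\si(x)\langle f,f\rangle$ in \eqref{eq:bilinear-form} (i.e. $\langle B_\si f,f\rangle$). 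Write $I$ for the double integral in the first term; the whole task reduces to identifying $I$ with the quadratic-difference expression.

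The heart of the argument is the symmetrization. After interchanging the order of integration (justified below) and swapping the dummy labels $x\leftrightarrow y$, the symmetry of $K_\si$ lets me rewrite
\[
I=a_\si\int_X\int_X\big(f(y)-f(x)\big)\overline{f(y)}\,K_\si(x,y)\,d\eta(x)\,d\eta(y).
\]
Averaging this with the original form of $I$ and collecting the integrand gives the purely algebraic identity
\[
\big(f(x)-f(y)\big)\overline{f(x)}+\big(f(y)-f(x)\big)\overline{f(y)}=\big(f(x)-f(y)\big)\big(\overline{f(x)}-\overline{f(y)}\big)=|f(x)-f(y)|^2,
\]
whence $I=\frac{a_\si}{2}\int_X\int_X|f(x)-f(y)|^2K_\si(x,y)\,d\eta(x)\,d\eta(y)$, which is exactly \eqref{eq:bilinear-form}.

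The only genuine obstacle is to license the use of Fubini's theorem, i.e. the interchange of the order of integration and the absolute convergence of the double integrals, on which the relabeling step rests. Here I would invoke the standing hypotheses: since $f\in C_0^\infty(X)$ is bounded with compact support, the mean value inequality makes $|f(x)-f(y)|$ comparable to $|x-y|$ near the diagonal and bounded elsewhere, while the assumption $\int_X\int_X|x-y|^2K_\si(x,y)\,dx\,dy<\infty$ controls the local singularity of the kernel along the diagonal. Together these ensure that $\big(f(x)-f(y)\big)\overline{f(x)}\,K_\si(x,y)$ is absolutely integrable on $X\times X$, so that Tonelli and Fubini apply and the symmetrization is legitimate; boundedness of $B_\si$ together with the compact support of $f$ makes the remaining term finite. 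Once this integrability is secured, no delicate cancellation is required and the identity follows from the algebra above.
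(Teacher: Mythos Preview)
Your proof is correct and follows essentially the same route as the paper: substitute the integral representation into the inner product, use Fubini and the symmetry $K_\si(x,y)=K_\si(y,x)$ to rewrite the double integral with $x$ and $y$ interchanged, and average the two expressions to produce $|f(x)-f(y)|^2$. Your justification of Fubini via the standing hypothesis $\int_X\int_X|x-y|^2K_\si(x,y)\,dx\,dy<\infty$ is in fact more explicit than what the paper records.
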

\begin{proof}
From the integral representation \eqref{eq:integral-rep} and Fubini we have
$$
\langle\Lambda_\si f,f\rangle=\int_X\Lambda_\si f(x)\overline{f(x)}\,d\eta(x)
= a_{\si}\int_X\int_X\big(f(x)-f(y)\big)\overline{f(x)}K_\si(x,y)\,d\eta(x)\,d\eta(y)
+B_{\si}(x)\langle  f,f\rangle.
$$
Since the kernel $K_\si(x,y)$ is symmetric, the above also equals to
$$
\langle\Lambda_\si f,f\rangle=\int_X\Lambda_\si f(x)\overline{f(x)}\,d\eta(x)
= a_{\si}\int_X\int_X\big(f(y)-f(x)\big)\overline{f(y)}K_\si(x,y)\,d\eta(x)\,d\eta(y)
+B_{\si}(x)\langle  f,f\rangle.
$$
Adding them up we get the announced statement.
\end{proof}

Even though we have proved the above lemma under the assumption that $ f  \in C_0^\infty(X)$, in order to make use of the representation \eqref{eq:bilinear-form} it is necessary to assume its validity for a suitable Sobolev space which depends on the operator $ A $ and $ \si$.  Let us denote such a Sobolev space by $ W_A^\si(X) $ and assume that \eqref{eq:integral-rep} and hence the result of Lemma \ref{lem:bilinear} are valid for all $ f \in W_A^\si(X)$.

\subsection{The ground state representation and a Hardy-type inequality}

Let $0<\si<1$. For a suitable positive function $w_\si(x)$  to be specified  later, let us set
$$
\mathcal{H}_\si[f]:=\langle\Lambda_\si f,f\rangle-C_{\si}\int_X \frac{\widetilde{w}_\si(x)}{w_\si(x)}|f(x)|^2\,d\eta(x),
$$
where $C_{\si}$ is a positive constant and $\widetilde{w}_\si(x) := \Lambda_\si w_\si(x)$. If we can show that $\mathcal{H}_\si[f]$ is nonnegative, then we get a version of  Hardy's inequality for the operator $ \Lambda_\si$.

\begin{thm}
\label{thm:groundstate}
Let $0<\si<1$ and assume the validity of Lemma \ref{lem:bilinear} for all $ f \in W_A^\si(X)$.  Given $f \in C_0^\infty(X) $ and a  positive function $ w_\si \in W_A^\si(X) $ set $g(x)=f(x)(w_\si(x))^{-1}$. Then
$$
\mathcal{H}_\si[f]
=\frac{a_{\si}}{2}\int_X\int_X|g(x)-g(y)|^2K_\si(x,y)w_\si(x)w_\si(y)\,d\eta(x)\,d\eta(y),
$$
where $a_{\si}$ is the positive constant in Lemma \ref{lem:bilinear}.
\end{thm}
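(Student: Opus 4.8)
The plan is to carry out the non-local ground state substitution of Frank--Lieb--Seiringer: set $f = g\,w_\si$ and reorganise the bilinear form of Lemma~\ref{lem:bilinear} so that the ``diagonal'' potential separates off. First I would invoke \eqref{eq:bilinear-form} to write
$$
\langle\Lambda_\si f,f\rangle = \frac{a_\si}{2}\int_X\int_X |f(x)-f(y)|^2 K_\si(x,y)\,d\eta(x)\,d\eta(y) + \int_X B_\si(x)|f(x)|^2\,d\eta(x),
$$
and then insert $f(x)=g(x)w_\si(x)$ throughout.

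The heart of the computation is the pointwise identity, valid for complex $g$ and positive $w_\si$,
$$
|f(x)-f(y)|^2 = |g(x)-g(y)|^2\,w_\si(x)w_\si(y) + |g(x)|^2 w_\si(x)\big(w_\si(x)-w_\si(y)\big) + |g(y)|^2 w_\si(y)\big(w_\si(y)-w_\si(x)\big),
$$
which one checks by expanding both sides: the cross terms $-2\Re[g(x)\overline{g(y)}]w_\si(x)w_\si(y)$ agree, and the remaining quadratic terms reorganise. Substituting this into the double integral, the first summand on the right reproduces verbatim the claimed ground state expression $\frac{a_\si}{2}\int_X\int_X |g(x)-g(y)|^2 K_\si(x,y) w_\si(x)w_\si(y)\,d\eta(x)\,d\eta(y)$.

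For the two leftover summands I would use the symmetry $K_\si(x,y)=K_\si(y,x)$: swapping $x$ and $y$ shows the corresponding integrals coincide, so together they contribute
$$
a_\si\int_X |g(x)|^2 w_\si(x)\Big(\int_X\big(w_\si(x)-w_\si(y)\big)K_\si(x,y)\,d\eta(y)\Big)d\eta(x).
$$
The inner integral is recognised through \eqref{eq:integral-rep} applied to $w_\si$ (legitimate precisely because $w_\si\in W_A^\si(X)$): namely $a_\si\int_X (w_\si(x)-w_\si(y))K_\si(x,y)\,d\eta(y) = \Lambda_\si w_\si(x) - w_\si(x)B_\si(x) = \widetilde{w}_\si(x) - w_\si(x)B_\si(x)$. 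Since $|g(x)|^2 w_\si(x)^2 = |f(x)|^2$, these terms contribute $\int_X \frac{\widetilde{w}_\si(x)}{w_\si(x)}|f(x)|^2\,d\eta(x) - \int_X B_\si(x)|f(x)|^2\,d\eta(x)$, and the last integral cancels the $B_\si$ term carried over from \eqref{eq:bilinear-form}. This leaves $\langle\Lambda_\si f,f\rangle = \frac{a_\si}{2}\int_X\int_X|g(x)-g(y)|^2 K_\si(x,y) w_\si(x)w_\si(y)\,d\eta(x)\,d\eta(y) + \int_X \frac{\widetilde{w}_\si}{w_\si}|f|^2\,d\eta$, so that taking $C_\si=1$ in the definition of $\mathcal{H}_\si[f]$ yields the theorem.

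The main obstacle is analytic rather than algebraic: one must justify breaking the double integral into three pieces and the interchange of the order of integration that identifies the cross term with $\Lambda_\si w_\si$. Since $K_\si$ is not given explicitly and may be singular on the diagonal, I would rely on the standing hypothesis $\int_X\int_X |x-y|^2 K_\si(x,y) < \infty$ together with the smoothness and compact support of $f$ (hence of $g$) to dominate $|g(x)-g(y)|^2 K_\si$ and each term $|g|^2 w_\si(w_\si(x)-w_\si(y))K_\si$ near the diagonal, so that Fubini applies and every iterated integral converges absolutely. The requirement that \eqref{eq:integral-rep} be valid for $w_\si$ itself, i.e. that $w_\si$ lie in $W_A^\si(X)$, is exactly what legitimises the recognition step above.
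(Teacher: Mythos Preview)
Your argument is correct and is essentially the same as the paper's: the paper polarizes \eqref{eq:bilinear-form} and then specializes to $F=|f|^2/w_\si$, $G=w_\si$, whereas you write the equivalent pointwise identity $|f(x)-f(y)|^2=|g(x)-g(y)|^2 w_\si(x)w_\si(y)+|g(x)|^2 w_\si(x)(w_\si(x)-w_\si(y))+|g(y)|^2 w_\si(y)(w_\si(y)-w_\si(x))$ directly and then invoke \eqref{eq:integral-rep} for $w_\si$; the underlying algebra and the use of $w_\si\in W_A^\si(X)$ are identical. Your observation that the computation forces $C_\si=1$ is also correct and matches what the paper's proof actually shows.
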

\begin{proof}
By polarizing the expression in Lemma \ref{lem:bilinear} we get, for any $F,G\in C_0^{\infty}(X)$,
\begin{equation}
\label{eq:polarization}
\langle\Lambda_\si F,G\rangle= \frac{a_{\si}}{2}\int_X\int_X\big(F(x)-F(y)\big)
\overline{\big(G(x)-G(y)\big)}K_\si(x,y)\,d\eta(x)\,d\eta(y)+B_{\si}(x)\langle F,G\rangle.
\end{equation}
Now, we take $G(x)=w_\si(x)$ and $F(x)=|f(x)|^2w_\si(x)^{-1}$. After simplification, the right hand side of \eqref{eq:polarization} becomes
$$
\frac{a_{\si}}{2}\int_X\int_X\bigg(|f(x)-f(y)|^2-
 \Big|\frac{f(x)}{w_\si(x)}-\frac{f(y)}{w_\si(y)}\Big|^2w_\si(x)w_\si(y)\bigg)
 K_\si(x,y)\,d\eta(x)\,d\eta(y)+B_{\si}(x)\langle f,f\rangle.
$$
On the other hand,  using the fact that $ A_\si $ is self-adjoint and recalling the definition of $ \widetilde{w}_\si$ we could write the left hand side of \eqref{eq:polarization} as
$$
C_{\si}\int_{X}\frac{\widetilde{w}_\si(x)}{w_\si(x)}|f(x)|^2\,d\eta(x).
$$
Thus we have
\begin{multline*}
C_{\si}\int_{X}\frac{\widetilde{w}_\si(x)}{w_\si(x)}|f(x)|^2\,d\eta(x)= \frac{a_{\si}}{2}\int_X\int_X|f(x)-f(y)|^2K_\si(x,y)\,d\eta(x)\,d\eta(y)+B_{\si}(x)\langle f,f\rangle\\
-\frac{a_{\si}}{2}\int_X\int_X \Big|g(x)-g(y)\Big|^2 K_\si(x,y)w_\si(x)w_\si(y)
\,d\eta(x)\,d\eta(y).
\end{multline*}
The latter identity and Lemma \ref{lem:bilinear} proves the theorem.
\end{proof}

Consequently, Theorem \ref{thm:groundstate} leads to the following Hardy-type inequality.

\begin{cor}
\label{cor:Hardyineq}
Let $0<\si<1$. Then
$$
\langle\Lambda_\si f,f\rangle \ge C_{\si}\int_X|f(x)|^2\frac{\widetilde{w}_\si(x)}{w_\si(x)}\,d\eta(x),
$$
for all functions $f\in C_0^{\infty}(X)$.
\end{cor}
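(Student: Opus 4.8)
The plan is to read the inequality off directly from Theorem \ref{thm:groundstate}, which already contains all the work. That theorem furnishes the exact identity
$$
\mathcal{H}_\si[f]=\frac{a_{\si}}{2}\int_X\int_X|g(x)-g(y)|^2K_\si(x,y)w_\si(x)w_\si(y)\,d\eta(x)\,d\eta(y),
$$
where $g=f/w_\si$ and, by definition, $\mathcal{H}_\si[f]=\langle\Lambda_\si f,f\rangle-C_{\si}\int_X \frac{\widetilde{w}_\si(x)}{w_\si(x)}|f(x)|^2\,d\eta(x)$. Thus the only remaining task is to show that the right-hand side of this identity is nonnegative and then to rearrange the resulting inequality $\mathcal{H}_\si[f]\ge 0$.

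First I would invoke the standing hypotheses attached to the integral representation \eqref{eq:integral-rep} and to the ground state setup: the constant $a_{\si}$ is positive, the kernel $K_\si(x,y)$ is a positive kernel, and the weight $w_\si$ is chosen positive. Consequently the integrand $|g(x)-g(y)|^2K_\si(x,y)w_\si(x)w_\si(y)$ is a product of nonnegative factors, hence nonnegative for $(d\eta\times d\eta)$-almost every $(x,y)$. Integrating a nonnegative integrand yields a nonnegative value, so the double integral is $\ge 0$, and therefore $\mathcal{H}_\si[f]\ge 0$ for every $f\in C_0^\infty(X)$.

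Then I would merely unfold the definition of $\mathcal{H}_\si[f]$: the inequality $\mathcal{H}_\si[f]\ge 0$ reads
$$
\langle\Lambda_\si f,f\rangle-C_{\si}\int_X \frac{\widetilde{w}_\si(x)}{w_\si(x)}|f(x)|^2\,d\eta(x)\ge 0,
$$
which is exactly the asserted Hardy-type inequality once the second term is moved to the other side. This being a direct corollary, I do not anticipate any genuine obstacle; the substantive content lies entirely in the ground state identity of Theorem \ref{thm:groundstate}, and the positivity of $a_\si$, $K_\si$, and $w_\si$ does the rest. The only point deserving a word of care is that the argument is carried out for $f\in C_0^\infty(X)$ exactly as stated, where $g=f/w_\si$ is well defined thanks to the positivity of $w_\si$ and the identity of Theorem \ref{thm:groundstate} applies verbatim. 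If one additionally wished to comment on sharpness, one would observe that equality forces the double integral to vanish and hence $g$ to be constant, i.e. $f$ proportional to $w_\si$, so that $w_\si$ plays the role of an optimizer; but that is a separate matter from the inequality recorded here.
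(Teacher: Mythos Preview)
Your proposal is correct and follows exactly the approach of the paper: the corollary is stated immediately after Theorem~\ref{thm:groundstate} with no separate proof, since the nonnegativity of $\mathcal{H}_\si[f]$ is immediate from the positivity of $a_\si$, $K_\si$, and $w_\si$ in the ground state identity. Your write-up simply makes this implicit reasoning explicit.
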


\begin{rem}
In the proof of Theorem \ref{thm:groundstate} we have assumed that  the action of the operator $\Lambda_{\si}$ on the  weight $w_{\si}$ can be calculated. In several cases, this can be done via the spectral theorem but it is important that the resulting function $ \widetilde{w}_\si $ is explicit. Though this can be done in several particular cases, we do not have a general result guaranteeing such a simplification. Actually, this step is crucial, and so the choice of such weight a function is dictated by the requirement that $ \widetilde{w}_\si $ can be computed explicitly. In most cases, we use a function $ w_\si $ related to  the fundamental solution of the operator $\Lambda_{\si}.$
\end{rem}

\begin{rem}
Corollary \ref{cor:Hardyineq} is not necessarily  a sharp Hardy inequality, but it is an intermediate step towards a sharp Hardy inequality of the type \eqref{eq:hardy}. The task boils down to minimizing the function $a_{\si}\frac{\widetilde{w}_\si(x)}{w_\si(x)}$, keeping a careful track of the constants involved. In several specific cases this function can be explicitly computed, so the sharp Hardy inequality, modulo verifications on the convergence of the integrals, is directly obtained from Theorem \ref{thm:groundstate} (see \cite[Proposition 4.1]{FLS} for the Euclidean setting or \cite[Theorems 5.2 and 5.4]{RT} for the Heisenberg group).
\end{rem}

The definitions and the context above are rather general. We remind that in the following sections we will develop the procedure in the particular cases of Laguerre and Dunkl--Hermite settings. We will deal with the conformally invariant fractional $\si$-th powers of the corresponding operators $A$, which we denote by $\Lambda_{\si}$. Such operators will be defined by using spectral decomposition.

We have just outlined  a general technique to get Hardy inequalities. This technique was first developed in the Euclidean context in \cite{FLS}, and improved and generalized, always related to the Euclidean Laplacian, in \cite{FS}. Such a method was for the first time  adapted to get Hardy inequalities in the Heisenberg group in  \cite{RT}. Our main purpose in this paper is to show  the robustness of this technique by proving   Hardy inequalities related to operators which are not translation invariant.  In the subsequent sections we will apply this procedure to get Hardy inequalities for the Laguerre operator, the harmonic oscillator, and the Dunkl--Hermite operator.

\section{A Hardy inequality for the fractional Laguerre operator}
\label{sec:HardyLaguerre}

\subsection{The fractional powers of the Laguerre operator}
\label{subsec:fpowersL}
We will introduce first some facts related to Laguerre functions. From now on, let $\al>-1/2$. For $r,s>0$, we define the \textit{Laguerre translation} $\mathcal{T}_r^{\al}f$ of a function $f$ on $(0,\infty)$ by
\begin{equation*}
\mathcal{T}_r^{\al}f(s)=\frac{\Gamma(\al+1)2^{\al}}{\sqrt{2\pi}}\int_0^{\pi}f
\big((r^2+s^2+2rs\cos\theta)^{1/2}\big)J_{\al-1/2}(rs\sin\theta)
(rs\sin\theta)^{-(\al-1/2)}(\sin\theta)^{2\al}\,d\theta,
\end{equation*}
where $J_{\nu}$ is the Bessel function of order $\nu$. If $f$ and $g$ are functions defined on $(0,\infty)$, the \textit{Laguerre convolution} $f\ast_{\al}g$ is given by
\begin{equation}
\label{eq:LagConv}
f\ast_{\al}g=\int_0^{\infty}\mathcal{T}_r^{\al}f(s)g(s)s^{2\al+1}\,ds,
\end{equation}
see \cite[Chapter 6]{ST} and references therein for the definitions and properties of Laguerre translation and Laguerre convolution.

For $r\in (0,\infty)$, we define the Laguerre functions of type $\al$ as
\begin{equation*}
\varphi_n^{\al}(r):=L_n^{\al}(r^2)e^{-r^2/2}, \quad n=0,1,\ldots,
\end{equation*}
where $L_{n}^{\al}$ are the Laguerre polynomials of order $\al$. The functions $\{\varphi_n^{\al}\}_{n=0}^{\infty}$ form an orthogonal basis for $ L^2((0,\infty),d\mu_{\al})$, where $d\mu_{\al}(r)=r^{2\al+1}$. When we apply the Laguerre translation to Laguerre functions we have the indentity (see \cite[(6.1.28)]{ST})
\begin{equation}
\label{eq:Lagtransvarphi}
\mathcal{T}_r^\alpha \varphi_n^\alpha(s)=\frac{n!}{(\alpha+1)_n}
\varphi_n^\alpha(r)\varphi_n^\alpha(s),\qquad \alpha>-1/2,
\end{equation}
with the notation $(x)_n=x(x-1)(x-2)\cdots (x-n+1)$ standing for the  Pochhammer symbol.

As stated in the introduction, we will deal with the Laguerre differential operator given by
\begin{equation}\label{eq:differential operator}
L_{\al}=-\frac{d^2}{dr^2}+r^2-\frac{2\al+1}{r}\frac{d}{dr},
\end{equation}
which is symmetric on  $ L^2((0,\infty), d\mu_\al) $ taken with respect to the measure $ d\mu_\al$. The functions $ \varphi_n^\al $ are eigenfunctions of the differential
operator \eqref{eq:differential operator}. Indeed, for $ n =0, 1, 2,....$
$$ L_\al \varphi_n^\al = (4n+2\al+2) \varphi_n^\al.$$  The family of
functions $\psi_n^{\al}$, given by
$$
\psi_n^{\al}(r) = \bigg(\frac{2\Gm(n+1)}{\Gm(n+\al+1)} \bigg)^{1/2}
\varphi_n^{\al}(r), \quad r>0,
$$
forms an orthonormal basis for $ L^2((0,\infty), d\mu_\al).$ The Laguerre expansion of a function $ f \in L^2((0,\infty), d\mu_\al),$  namely the expansion
$$  f = \sum_{n=0}^\infty \bigg(\frac{2\Gm(n+1)}{\Gm(n+\al+1)} \bigg) (f, \varphi_n^\al) \varphi_n^\al $$ can be written in a compact form in terms of Laguerre convolution.
\begin{lem}
For a function $f \in L^2((0,\infty),d\mu_\al) $  we have
$$
f=\frac{2}{\Gamma(\al+1)}\sum_{n=0}^{\infty}f\ast_{\al}\varphi_n^{\al}
$$
where the series converges in norm. In particular,
\begin{equation}
\label{eq:convoLaguerres}
\delta_{nj}\varphi_n^{\al}=\frac{2}{\Gamma(\al+1)}\varphi_n^{\al}\ast_{\al}\varphi_j^{\al}.
\end{equation}
\end{lem}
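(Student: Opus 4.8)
The plan is to establish the pointwise formula for $f\ast_\al\varphi_n^\al$ first, and then read off both assertions from it: the series expansion by summing over $n$, and the special case \eqref{eq:convoLaguerres} by specializing $f=\varphi_j^\al$.

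First I would rewrite the convolution so that the Laguerre translation falls on the eigenfunction rather than on the general factor $f$. Since the Laguerre translation $\mathcal{T}_r^\al$ is self-adjoint on $L^2((0,\infty),d\mu_\al)$ (equivalently, the convolution $\ast_\al$ is commutative), a standard fact recorded in \cite[Chapter 6]{ST}, we may write
$$
(f\ast_\al\varphi_n^\al)(r)=\int_0^\infty \mathcal{T}_r^\al f(s)\,\varphi_n^\al(s)\,d\mu_\al(s)=\int_0^\infty f(s)\,\mathcal{T}_r^\al\varphi_n^\al(s)\,d\mu_\al(s),
$$
and now \eqref{eq:Lagtransvarphi} applies directly. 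Inserting $\mathcal{T}_r^\al\varphi_n^\al(s)=\frac{n!}{(\al+1)_n}\varphi_n^\al(r)\varphi_n^\al(s)$ and pulling the $r$-dependence out of the $s$-integral gives
$$
f\ast_\al\varphi_n^\al=\frac{n!\,\Gamma(\al+1)}{\Gamma(n+\al+1)}\,(f,\varphi_n^\al)\,\varphi_n^\al,
$$
where $(f,\varphi_n^\al)=\int_0^\infty f(s)\varphi_n^\al(s)\,d\mu_\al(s)$ is the $n$-th Laguerre coefficient. Multiplying by $2/\Gamma(\al+1)$ yields exactly $\frac{2\Gamma(n+1)}{\Gamma(n+\al+1)}(f,\varphi_n^\al)\varphi_n^\al$, which is the $n$-th term of the Laguerre expansion of $f$ recalled just above the lemma.

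With this per-term identity in hand, the series statement is immediate: summing over $n$, the right-hand side $\frac{2}{\Gamma(\al+1)}\sum_n f\ast_\al\varphi_n^\al$ coincides term by term with the Laguerre expansion of $f$, which converges to $f$ in $L^2((0,\infty),d\mu_\al)$ precisely because $\{\psi_n^\al\}$ is an orthonormal basis (Parseval). The special case \eqref{eq:convoLaguerres} then follows by taking $f=\varphi_j^\al$ in the displayed formula and using the orthogonality relation $(\varphi_j^\al,\varphi_n^\al)=\frac{\Gamma(n+\al+1)}{2\,n!}\delta_{nj}$, obtained from $\|\psi_n^\al\|_{d\mu_\al}=1$; this collapses the prefactor to $\delta_{nj}$, and commutativity of $\ast_\al$ lets one write the result as $\frac{2}{\Gamma(\al+1)}\varphi_n^\al\ast_\al\varphi_j^\al=\delta_{nj}\varphi_n^\al$.

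I do not expect a serious obstacle: once the convolution is reshaped so that \eqref{eq:Lagtransvarphi} is applicable, the rest is bookkeeping with the Gamma-function normalizations. The two points needing a little care are (i) invoking the self-adjointness of $\mathcal{T}_r^\al$ with respect to $d\mu_\al$ together with the finiteness of the integrals defining $f\ast_\al\varphi_n^\al$, both of which I would cite from \cite[Chapter 6]{ST}; and (ii) keeping the Pochhammer symbol consistent, namely reading $(\al+1)_n$ in \eqref{eq:Lagtransvarphi} as the rising factorial $\Gamma(\al+n+1)/\Gamma(\al+1)$, so that the factor $n!/(\al+1)_n$ cancels correctly against $\|\varphi_n^\al\|_{d\mu_\al}^2=\Gamma(n+\al+1)/(2\,n!)$ to leave the clean constant $\Gamma(\al+1)/2$.
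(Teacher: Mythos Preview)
Your proposal is correct and follows essentially the same route as the paper: both arguments hinge on the identity \eqref{eq:Lagtransvarphi} together with the self-adjointness of $\mathcal{T}_r^\al$ on $L^2((0,\infty),d\mu_\al)$ (the paper cites \cite[Proposition 6.1.1]{ST} for this step) to identify $\frac{2}{\Gamma(\al+1)}f\ast_\al\varphi_n^\al$ with the $n$th spectral projection $P_n^\al f$. The only cosmetic difference is direction---the paper starts from the orthogonal expansion $f=\sum_n P_n^\al f$ and recognizes each term as a Laguerre convolution, whereas you compute the convolution first and then sum---and your caution about reading $(\al+1)_n$ as the rising factorial is well placed.
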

\begin{proof}
Let us define $P_n^{\al}$ to be the projection onto the $n$th eigenspace
$$
P_n^{\al}f(r)=\psi_n^{\al}(r)\int_0^{\infty}f(s)\psi_n^{\al}(s)\,d\mu_{\al}(s).
$$
Therefore, by \eqref{eq:Lagtransvarphi},
\begin{align*}
f(r)&=\sum_{n=0}^{\infty}P_n^{\al}f(r)=2\sum_{n=0}^{\infty}\int_0^{\infty}f(s)\frac{\Gamma(n+1)}
{\Gamma(n+\al+1)}
\varphi_n^{\al}(r)\varphi_n^{\al}(s)\,d\mu_{\al}(s)\\
&=\frac{2}{\Gamma(\al+1)}\sum_{n=0}^{\infty}\int_0^{\infty}
f(s)\mathcal{T}_r^{\al}\varphi_n^{\al}(s)\,
d\mu_{\al}(s)=\frac{2}{\Gamma(\al+1)}\sum_{n=0}^{\infty}f\ast_{\al}\varphi_n^{\al}(r),
\end{align*}
where we used \cite[Proposition 6.1.1]{ST} in the fourth equality above. Finally, by computing $P_n^{\al}\varphi_j^{\al}$, and using that $P_n^{\al}f(r)= \frac{2}{\Gamma(\al+1)} f\ast_{\al}\varphi_n^{\al}(r)$ (which is easily deduced from the equalities above), we obtain \eqref{eq:convoLaguerres}.
\end{proof}

Thus the  spectral decomposition of the Laguerre operator is given by
$$
L_{\al}f(r)=\frac{2}{\Gamma(\al+1)}\sum_{n=0}^{\infty}(4n+2\al+2) f\ast_{\al}\varphi_n^{\al}(r).
$$
Therefore, a natural way to define  fractional powers of the Laguerre operator is via the spectral decomposition:
$$
L_{\al}^{\si}f(r)=\frac{2}{\Gamma(\al+1)}\sum_{n=0}^{\infty}(4n+2\al+2)^{\si} f\ast_{\al}\varphi_n^{\al}(r).
$$
Nevertheless, it will be more convenient to work with the following modified fractional powers $L_{\al,\si}$. For $0\le \si \le 1$ we define $L_{\al,\si}$ by
\begin{equation}
\label{eq:spectralLsi}
L_{\al,\si}f(r)=\frac{2}{\Gamma(\al+1)}\sum_{n=0}^{\infty}4^{\si}
S_n^{\alpha,\si}f\ast_{\al}\varphi_n^{\al}(r),
\end{equation}
where $S_n^{\alpha,\si}$ is as in \eqref{eq:Salpha}.
In short, the above means that $L_{\al,\si}$ is the operator
$$
L_{\al,\si}:=4^{\si}\frac{\Gamma\big(\frac{L_{\al}}{4}+\frac{1+\si}{2}\big)}
{\Gamma\big(\frac{L_{\al}}{4}+\frac{1-\si}{2}\big)}
$$
corresponding to the spectral multiplier $4^\si S_n^{\alpha,\si}$ with $S_n^{\alpha,\si}$ defined in \eqref{eq:Salpha}.
The motivation for this definition goes back, for instance, to \cite[(1.33)]{BFM}. Observe that $L_{\al,1}= L_{\al}$. This operator has an explicit fundamental solution, see Subsection \ref{subsec:CH}, and this fact makes it more suitable than $L_{\al}^{\si}$, whose fundamental solution cannot be written down explicitly. Moreover, by using Stirling's formula for the Gamma function, one can readily see that $L_{\al,\si}=U_{\al,\si}L_{\al}^{\si}$, where $U_{\al,\si}$ is a bounded operator on $L^2((0,\infty), d\mu_{\al})$.

\subsection{The Laguerre heat semigroup}

We will use the language of semigroups to get a suitable integral representation for $L_{\al,\si}$. The heat semigroup
related to $L_{\al}$ is  defined on $ L^2((0,\infty),  d\mu_\al) $ by
\begin{equation}
\label{eq:heatsemigroupLag}
e^{-tL_{\al}}f:=T_{\al,t}f=\frac{2}{\Gamma(\al+1)}\sum_{n=0}^{\infty}e^{-t(4n+2\al+2)}
f\ast_{\al}\varphi_n^{\al},\quad t>0.
\end{equation}
If we define
$$
q_{t,\al}(r)=\frac{2}{\Gamma(\al+1)}\sum_{n=0}^{\infty}e^{-(4n+2\al+2)t}\varphi_n^{\al}(r),
$$
then the generating function identity for $\varphi_n^{\al}$ (see \cite[(1.4.24)]{STH}) immediately gives
\begin{equation}
\label{eq:heatkernelLag}
q_{t,\al}(r)=\frac{1}{2^{\al}\Gamma(\al+1)}(\sinh 2t)^{-\al-1}e^{-(\coth 2t)r^2/2},
\end{equation}
and we can write
$$
e^{-tL_{\al}}f=f\ast_{\al}q_{t,\al}.
$$
Let us denote by $I_{\al}$ the modified Bessel function of the first
kind and order $\al$, see \cite[Chapter 5, Section 5.7]{Lebedev}. Then, from the definition of $q_{t,\al}(r)$, the identity \eqref{eq:Lagtransvarphi}, and another generating function identity for $\varphi_n^{\alpha}(r)$ (see \cite[p. 83]{ST})
\begin{equation}
\label{eq:generating}
\sum_{n=0}^{\infty}\frac{\Gamma(n+1)}{\Gamma(n+\alpha+1)}\varphi_n^{\alpha}(r)
\varphi_n^{\alpha}(s)w^{2n}=(1-w^2)^{-1}(rsw)^{-\alpha}\exp\Big\{-\frac{1}{2}\Big (\frac{1+w^2}{1-w^2}\Big )
 (r^2+s^2)\Big\}I_{\alpha}\Big(\frac{2wrs}{1-w^2}\Big),
\end{equation}
we obtain readily the following lemma.
\begin{lem}
\label{lem:Lagtranskernel}
Let $\al>-1/2$, we have that
$$
\mathcal{T}^{\al}_r q_{t,\al}(s)=\frac{e^{-\frac{\coth 2t}{2}(r^2+s^2)}}{(rs)^\al \sinh 2t}I_\al\left(\frac{rs}{\sinh 2t}\right).
$$
\end{lem}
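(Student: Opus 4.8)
The plan is to compute $\mathcal{T}^\al_r q_{t,\al}(s)$ directly from the spectral definition of $q_{t,\al}$ and then collapse the resulting series by means of the generating function identity \eqref{eq:generating}. For fixed $t>0$ the factor $e^{-(4n+2\al+2)t}$ forces rapid convergence, so the first step is to move the Laguerre translation inside the sum,
\[
\mathcal{T}^\al_r q_{t,\al}(s)=\frac{2}{\Gamma(\al+1)}\sum_{n=0}^{\infty}e^{-(4n+2\al+2)t}\,\mathcal{T}^\al_r\varphi_n^\al(s).
\]

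Next I would apply \eqref{eq:Lagtransvarphi}, rewriting the constant there as $\frac{n!}{(\al+1)_n}=\frac{\Gamma(n+1)\Gamma(\al+1)}{\Gamma(n+\al+1)}$ so that it matches the coefficients appearing in \eqref{eq:generating}. The factor $\Gamma(\al+1)$ cancels the one in the prefactor, leaving
\[
\mathcal{T}^\al_r q_{t,\al}(s)=2\sum_{n=0}^{\infty}e^{-(4n+2\al+2)t}\frac{\Gamma(n+1)}{\Gamma(n+\al+1)}\varphi_n^\al(r)\varphi_n^\al(s).
\]
The decisive observation is that the exponential splits as $e^{-(4n+2\al+2)t}=e^{-(2\al+2)t}(e^{-2t})^{2n}$; setting $w=e^{-2t}$ and pulling out the $n$-independent constant turns the series into exactly the left-hand side of \eqref{eq:generating}.

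It then remains to simplify the right-hand side of \eqref{eq:generating} under the substitution $w=e^{-2t}$. I would record the elementary identities $1-w^2=2e^{-2t}\sinh 2t$, $\frac{1+w^2}{1-w^2}=\coth 2t$, and $\frac{2w}{1-w^2}=\frac{1}{\sinh 2t}$, which convert the three factors $(1-w^2)^{-1}(rsw)^{-\al}$, the Gaussian exponential, and the Bessel argument into their counterparts in the claimed formula. Collecting the exponential prefactors, the powers of $e^t$ combine to $e^{(-2\al-2+2+2\al)t}=1$ and the numerical constants cancel, producing exactly $\dfrac{e^{-\frac{\coth 2t}{2}(r^2+s^2)}}{(rs)^\al \sinh 2t}I_\al\!\left(\frac{rs}{\sinh 2t}\right)$.

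The calculation is otherwise routine; the only point that deserves care is the interchange of $\mathcal{T}^\al_r$ with the infinite sum in the first step, which I expect to be the main (and only minor) obstacle. I would settle it by noting that for $t>0$ the series converges both in $L^2((0,\infty),d\mu_\al)$ and uniformly on compact sets, and that $\mathcal{T}^\al_r$ acts through a fixed integrable kernel, so that dominated convergence justifies the termwise translation. No deeper difficulty is anticipated.
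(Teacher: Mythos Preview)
Your proposal is correct and follows essentially the same route as the paper: the paper's one-line justification cites the definition of $q_{t,\al}$, the identity \eqref{eq:Lagtransvarphi}, and the generating function \eqref{eq:generating}, which is exactly the chain of steps you carry out in detail. Your handling of the constants and the substitution $w=e^{-2t}$ is accurate, and the justification for interchanging $\mathcal{T}^\al_r$ with the sum is an appropriate (and sufficient) level of rigor for this lemma.
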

The  Laguerre heat semigroup is not conservative, i.e. $ T_{\al,t}1 \neq 1.$  However, it is easy to calculate and estimate $ T_{\al,t}1 $ which is the content of the following lemma.
\begin{lem}
\label{lem:conservative}
Let $\al>-1/2$, we have that
$$
T_{\al,t}1(r)=\frac{1}{(\cosh 2t)^{\al+1}}e^{-\frac{\tanh 2t}{2}r^2}\le1.
$$
\end{lem}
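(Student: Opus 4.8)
The plan is to compute $T_{\al,t}1$ directly from the integral kernel supplied by Lemma~\ref{lem:Lagtranskernel}, reducing everything to one classical Bessel integral. Since $e^{-tL_\al}f = f\ast_\al q_{t,\al}$ and the Laguerre convolution is commutative (a standard property, see \cite[Chapter 6]{ST}), I would write
$$
T_{\al,t}1(r) = 1\ast_\al q_{t,\al}(r) = q_{t,\al}\ast_\al 1(r) = \int_0^\infty \mathcal{T}_r^\al q_{t,\al}(s)\,s^{2\al+1}\,ds,
$$
so that the constant function is simply integrated against the translated heat kernel. Substituting the closed form of $\mathcal{T}_r^\al q_{t,\al}(s)$ from Lemma~\ref{lem:Lagtranskernel} and pulling the factors depending only on $r$ out of the integral gives
$$
T_{\al,t}1(r) = \frac{e^{-\frac{\coth 2t}{2}r^2}}{r^\al \sinh 2t}\int_0^\infty e^{-\frac{\coth 2t}{2}s^2}\,I_\al\!\left(\frac{rs}{\sinh 2t}\right)s^{\al+1}\,ds.
$$

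The next step is to evaluate the $s$-integral by the classical formula
$$
\int_0^\infty e^{-p s^2}I_\nu(cs)\,s^{\nu+1}\,ds = \frac{c^\nu}{(2p)^{\nu+1}}\,e^{c^2/(4p)}, \qquad \Re p>0,\ \Re\nu>-1,
$$
(see, e.g., \cite{Lebedev}), applied with $p=\tfrac12\coth 2t>0$, $c=r/\sinh 2t$ and $\nu=\al>-1/2$. Both conditions $\Re p>0$ and $\Re\nu>-1$ hold for $t>0$ and $\al>-1/2$, which also justifies the convergence of the integral. After this substitution the whole expression is explicit, and what remains is to simplify it using hyperbolic identities. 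For the prefactor I would use $\sinh 2t\cdot\coth 2t=\cosh 2t$ to obtain $(\cosh 2t)^{-(\al+1)}$, while for the exponent I would combine $-\tfrac12\coth 2t\, r^2$ with the term $c^2/(4p)=r^2/(2\sinh 2t\cosh 2t)$ and use $1-\cosh^2 2t=-\sinh^2 2t$ to collapse it to $-\tfrac12\tanh 2t\, r^2$. This yields exactly
$$
T_{\al,t}1(r)=\frac{1}{(\cosh 2t)^{\al+1}}e^{-\frac{\tanh 2t}{2}r^2}.
$$

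The inequality $T_{\al,t}1(r)\le 1$ is then immediate: for $t>0$ one has $\cosh 2t\ge 1$ and $\al+1>0$, so the first factor is at most $1$, while $\tanh 2t>0$ and $r^2\ge0$ force the exponential factor to be at most $1$ as well. The only delicate point in the whole argument is the bookkeeping in the hyperbolic simplification of the exponent; I do not expect any genuine obstacle, since the algebra is forced once the Bessel integral has been carried out. An alternative, slightly more laborious route would bypass Lemma~\ref{lem:Lagtranskernel} and instead expand $T_{\al,t}1$ through the spectral series \eqref{eq:heatsemigroupLag} together with the generating identity \eqref{eq:generating}, but the kernel computation above is the most economical.
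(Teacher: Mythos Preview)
Your proposal is correct and follows essentially the same route as the paper's proof: write $T_{\al,t}1(r)=\int_0^\infty \mathcal{T}_r^\al q_{t,\al}(s)\,d\mu_\al(s)$, insert the closed form from Lemma~\ref{lem:Lagtranskernel}, and evaluate the resulting $s$-integral by the classical Bessel identity $\int_0^\infty e^{-ps^2}I_\al(cs)s^{\al+1}\,ds=c^\al e^{c^2/(4p)}/(2p)^{\al+1}$. The only differences are cosmetic: you spell out the hyperbolic simplification and the reason for the inequality $\le 1$, whereas the paper records the integral formula (citing \cite{Prudnikov2}) and leaves those steps to the reader.
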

\begin{proof}
By \eqref{eq:LagConv} and the previous lemma
\begin{align*}
T_{\al,t}1(r)&=1\ast_{\al}q_{t,\al}(r)
=\int_0^{\infty}\mathcal{T}_r^{\al}q_{t,\al}(s)s^{2\al+1}\,ds\\
&=\frac{e^{-\frac{\coth 2t}{2}r^2}}{r^\al \sinh 2t}\int_0^{\infty}e^{-\frac{\coth 2t}{2}s^2}I_\al\left(\frac{rs}{\sinh 2t}\right)s^{\al+1}\,ds.
\end{align*}
Now, the result is a consequence of the identity (see \cite[(2.15.5.4)]{Prudnikov2})
\[
\int_0^\infty e^{-px^2}I_\alpha(cx) x^{\alpha+1}\, dx=\frac{c^\alpha e^{c^2/(4p)}}{(2p)^{\alpha+1}}, \qquad p>0,\quad \alpha>-1.
\]
\end{proof}
\subsection{Integral representation for $L_{\al,\si}$}

In order to obtain an integral representation for the operator $L_{\al,\si}$ we first prove a numerical identity. We will use the identity (see \cite[p. 382, 3.541.1]{GR})
\begin{equation}
\label{eq:GR0}
\int_0^{\infty}e^{-\mu t}\sinh^\nu\beta t\,dt=\frac{1}{2^{\nu+1}}
\frac{\Gamma\big(\frac{\mu}{2\beta}-\frac{\nu}{2}\big)\Gamma(\nu+1)}
{\Gamma\big(\frac{\mu}{2\beta}+\frac{\nu}{2}+1\big)},
\end{equation}
which is valid for $\operatorname{Re}\beta>0$, $\operatorname{Re}\nu>-1$, $\operatorname{Re}\mu>\operatorname{Re}\beta\nu$.
\begin{lem}
\label{lem:numerical1}
Let $0<\si<1$, and $\lambda\in \R$ such that $\lambda+2\si>-2$. Then,
$$
2^\si|\Gamma(-\si)|\frac{\Gamma\big(\frac{\lambda}{4}+\frac{1+\si}{2}\big)}
{\Gamma\big(\frac{\lambda}{4}+\frac{1-\si}{2}\big)}=\int_0^{\infty}(\cosh 2t-1)(\sinh 2t)^{-\si-1}\,dt+\int_0^{\infty}(1-e^{-t\lambda})(\sinh 2t)^{-\si-1}\,dt.
$$
\end{lem}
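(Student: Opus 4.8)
The plan is to merge the two integrals into a single one and to recognise it as the (regularised) value that \eqref{eq:GR0} assigns to $\int_0^\infty e^{-\lambda t}(\sinh 2t)^{-\si-1}\,dt$. Adding the integrands gives
\[
\int_0^\infty\big(\cosh 2t-e^{-\lambda t}\big)(\sinh 2t)^{-\si-1}\,dt,
\]
which I would first check is convergent: near $t=0$ the bracket is $O(t)$ and $(\sinh 2t)^{-\si-1}=O(t^{-\si-1})$, so the integrand is $O(t^{-\si})$ with $\si<1$; near $t=\infty$ the factor $(\sinh 2t)^{-\si-1}$ decays like $e^{-2(\si+1)t}$, and the worst term $e^{-\lambda t}(\sinh 2t)^{-\si-1}$ is integrable precisely when $\lambda+2(\si+1)>0$, i.e.\ under the hypothesis $\lambda+2\si>-2$. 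Observe that \eqref{eq:GR0} cannot be applied directly, since the exponent $\nu=-\si-1$ lies below the range $\operatorname{Re}\nu>-1$ where the formula is valid; the two subtractions present in the statement are exactly the regularisation that cures the non-integrable singularity at the origin.

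The $\lambda$-dependent part is the second integral, and I would handle it by analytic continuation in the exponent. Writing $\nu$ for $-\si-1$ and treating it as a complex parameter, for $\operatorname{Re}\nu\in(-1,0)$ and $\lambda>0$ both $\int_0^\infty(\sinh 2t)^\nu\,dt$ and $\int_0^\infty e^{-\lambda t}(\sinh 2t)^\nu\,dt$ converge and are evaluated by \eqref{eq:GR0} with $\mu=0$ and $\mu=\lambda$; hence on that strip
\[
\int_0^\infty(1-e^{-\lambda t})(\sinh 2t)^\nu\,dt=\frac{\Gamma(\nu+1)}{2^{\nu+1}}\Bigg(\frac{\Gamma(-\tfrac{\nu}{2})}{\Gamma(\tfrac{\nu}{2}+1)}-\frac{\Gamma(\tfrac{\lambda}{4}-\tfrac{\nu}{2})}{\Gamma(\tfrac{\lambda}{4}+\tfrac{\nu}{2}+1)}\Bigg).
\]
The left-hand side is holomorphic on the wider strip $-2<\operatorname{Re}\nu<0$ by the convergence analysis above, and so is the right-hand side: the simple pole of $\Gamma(\nu+1)$ at $\nu=-1$ is cancelled because both Gamma-ratios in the parenthesis equal $1$ there, so their difference has a simple zero at $\nu=-1$. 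By the identity theorem the two agree throughout $-2<\operatorname{Re}\nu<0$, and I may set $\nu=-\si-1$; a final continuation in $\lambda$ then extends the evaluation from $\lambda>0$ to the whole range $\lambda>-2\si-2$.

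For the $\lambda$-independent first integral I would instead use $\cosh 2t-1=2\sinh^2 t$ and $\sinh 2t=2\sinh t\cosh t$ to rewrite it as $2^{-\si}\int_0^\infty\sinh^{1-\si}t\,\cosh^{-\si-1}t\,dt$, a convergent Beta integral equal to $2^{-\si-1}\Gamma(1-\tfrac{\si}{2})\Gamma(\si)/\Gamma(1+\tfrac{\si}{2})$. Adding this to the $\nu=-\si-1$ specialisation of the displayed formula, the two $\lambda$-independent contributions (the Beta value and the term coming from the $\mu=0$ evaluation) must be shown to cancel; this is the step where the Legendre duplication and reflection formulas for $\Gamma$ are invoked, and it is also here that the absolute value enters through $|\Gamma(-\si)|=-\Gamma(-\si)$, valid since $\Gamma$ is negative on $(-1,0)$. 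What survives is exactly the ratio $\Gamma(\tfrac{\lambda}{4}+\tfrac{1+\si}{2})/\Gamma(\tfrac{\lambda}{4}+\tfrac{1-\si}{2})$ multiplied by the asserted constant.

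The main obstacle is the second step: \eqref{eq:GR0} is only valid for $\operatorname{Re}\nu>-1$, so the whole argument rests on pushing it past the pole at $\nu=-1$ and on verifying that the regularisation implicit in the two subtractions reproduces precisely this analytically continued value. The accompanying Gamma-function bookkeeping needed to see the $\lambda$-independent pieces cancel and to pin down the exact constant is routine but must be carried out with care.
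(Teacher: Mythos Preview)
Your approach is correct but proceeds quite differently from the paper's. The paper never leaves the range of validity of \eqref{eq:GR0}: it applies the formula with $\nu=-\si$ (so $\Re\nu>-1$ is satisfied), $\beta=2$, $\mu=\lambda+2$, obtaining a closed form for $\int_0^\infty e^{-(\lambda+2)t}(\sinh 2t)^{-\si}\,dt$. An integration by parts then converts this into an integral against $(\sinh 2t)^{-\si-1}$, and after the algebraic simplification $e^{-(\lambda+2)t}(\cosh 2t+\sinh 2t)=e^{-\lambda t}$ one lands directly on $\int_0^\infty(\cosh 2t-e^{-\lambda t})(\sinh 2t)^{-\si-1}\,dt$ with the asserted constant, without any separate evaluation of the $\lambda$-independent piece or any Gamma-function cancellation to verify.

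Your route---analytic continuation of \eqref{eq:GR0} in the exponent $\nu$ across the pole at $\nu=-1$, plus a direct Beta evaluation of the first integral, followed by a duplication/reflection identity to see the $\lambda$-independent terms cancel---is sound, and the removable-singularity argument at $\nu=-1$ is nicely observed. The trade-off is that you defer more work to Gamma bookkeeping (the cancellation you flag is exactly the computation of $E_\si$ in Remark~\ref{rem:Es}), whereas the paper's integration-by-parts trick is shorter and stays entirely within elementary real-variable manipulations. Your approach, on the other hand, makes transparent why the two subtractions in the statement are the natural regularisation of the divergent integral.
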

\begin{proof}

We take $\nu=-\si$, $\beta=2$, and $\mu=\lambda+2$ in \eqref{eq:GR0}. Thus
$$
\int_0^{\infty}e^{-(\lambda+2) t}(\sinh 2t)^{-\si}\,dt=\frac{2^{\si-1}\Gamma\big(\frac{\lambda+2}{4}+\frac{\si}{2}\big)
\Gamma(1-\si)}
{\Gamma\big(\frac{\lambda}{4}+\frac{1-\si}{2}+1\big)}
$$
or, equivalently,
\begin{equation}
\label{eq:id1}
\big(\frac{\lambda}{2}+1-\si\big)\int_0^{\infty}e^{-(\lambda+2) t}(\sinh 2t)^{-\si}\,dt=\frac{2^{\si}\Gamma\big(\frac{\lambda+2}{4}+\frac{1+\si}{2}\big)
\Gamma(1-\si)}
{\Gamma\big(\frac{\lambda}{4}+\frac{1-\si}{2}\big)}.
\end{equation}
Moreover, an integration by parts yields
\begin{multline*}
(\lambda+2)\int_0^{\infty}e^{-(\lambda+2) t}(\sinh 2t)^{-\si}\,dt=\int_0^{\infty}\frac{d}{dt}\big(1-e^{-(\lambda+2)t}\big)(\sinh 2t)^{-\si}\,dt\\
=2\si\int_0^{\infty}\big(1-e^{-(\lambda+2)t}\big)(\sinh 2t)^{-\si-1}\cosh 2t\,dt.
\end{multline*}
Combining this with \eqref{eq:id1}, and noticing that $\frac{\Gamma(1-\si)}{\si}=|\Gamma(-\si)|$, we deduce
\begin{align*}
\frac{2^{\si}\Gamma\big(\frac{\lambda}{4}+\frac{1+\si}{4}\big)|\Gamma(-\si)|}
{\Gamma\big(\frac{\lambda}{4}+\frac{1-\si}{2}\Big)}&=\int_0^{\infty}
\big(\big(1-e^{-(\lambda+2)t}\big)(\sinh 2t)^{-\si-1}\cosh 2t-e^{-(\lambda+2) t}(\sinh 2t)^{-\si}\big)\,dt\\
&=\int_0^{\infty}\big(\cosh 2t-e^{-(\lambda+2)t}(\cosh 2t+\sinh 2t)\big)(\sinh 2t)^{-\si-1}\,dt\\
&=\int_0^{\infty}\big(\cosh 2t-e^{-\lambda t}(\sinh 2t)^{-\si-1}\big)\,dt\\
&=\int_0^{\infty}(\cosh 2t-1)(\sinh 2t)^{-\si-1}\,dt+\int_0^{\infty}(1-e^{-t\lambda})(\sinh 2t)^{-\si-1}\,dt.
\end{align*}
The proof is complete.
\end{proof}

Define the functions
$$
\mathcal{K}_{\al,\si}(r,s):=\int_0^{\infty}\mathcal{T}_r^{\al}q_{t,\al}(s)\frac{dt}{(\sinh 2t)^{\si+1}}, \quad F_{\al,\si}(r):=\int_0^{\infty}\big(1-T_{\al,t}1(r)\big)\frac{dt}{(\sinh 2t)^{\si+1}}.
$$
The integral defining the function $F_{\al,\si}(r)$ has to be taken  in the sense of Bochner, i.e. as the integral of the $L^2((0,\infty),d\mu_\al)$ valued function $t\to (1-1\ast_{\al}q_{t,\al})$. Observe that this function  is non-negative, by Lemma \ref{lem:conservative}. The expression in Lemma \ref{lem:Lagtranskernel} allows us to deduce that $\mathcal{K}_{\al,\si}$ is a positive function. Let us call $E_{\si}:=\frac{2^{\si}}{|\Gamma(-\si)|}\int_0^{\infty}(\cosh 2t-1)(\sinh 2t)^{-\si-1}\,dt$. The constant $E_{\si}$ can be explicitly computed, see Remark~\ref{rem:Es}. We are ready to prove the integral representation for $L_{\al,\si}$.

\begin{prop}
\label{prop:integralLaguerre}
Let $0<\si<1$ and $f\in C_0^{\infty}(0,\infty)$. Then we have the following pointwise representation
$$
L_{\al,\si}f(r)=\frac{2^{\si}}{|\Gamma(-\si)|}
\int_0^{\infty}(f(r)-f(s))\mathcal{K}_{\al,\si}(r,s)\,d\mu_{\al}(s)
+f(r)\Big(E_{\si}+\frac{2^{\si}}{|\Gamma(-\si)|}F_{\al,\si}(r)\Big).
$$
\end{prop}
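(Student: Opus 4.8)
The plan is to start from the spectral definition \eqref{eq:spectralLsi} and feed in the numerical identity of Lemma \ref{lem:numerical1} evaluated at the $n$-th eigenvalue $\lambda=4n+2\al+2$ of $L_\al$. Since $E_\si=\frac{2^{\si}}{|\Gamma(-\si)|}\int_0^{\infty}(\cosh 2t-1)(\sinh 2t)^{-\si-1}\,dt$, multiplying the identity through by $\frac{4^{\si}}{2^{\si}|\Gamma(-\si)|}=\frac{2^{\si}}{|\Gamma(-\si)|}$ rewrites the spectral multiplier as
$$
4^{\si}S_n^{\al,\si}=E_{\si}+\frac{2^{\si}}{|\Gamma(-\si)|}\int_0^{\infty}\big(1-e^{-t(4n+2\al+2)}\big)(\sinh 2t)^{-\si-1}\,dt.
$$
Inserting this into \eqref{eq:spectralLsi} separates $L_{\al,\si}f$ into two pieces. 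The constant $E_\si$ is independent of $n$, so it factors out of the sum, and the reconstruction formula $f=\frac{2}{\Gamma(\al+1)}\sum_n f\ast_\al\varphi_n^\al$ collapses that piece to $E_\si f(r)$.

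For the remaining piece I would interchange the sum over $n$ with the integral in $t$ and identify the inner series. By the reconstruction formula the term with the constant $1$ sums to $f(r)$, while by \eqref{eq:heatsemigroupLag} the term with $e^{-t(4n+2\al+2)}$ sums to $f\ast_\al q_{t,\al}(r)=T_{\al,t}f(r)$. Hence the bracketed factor equals $f(r)-T_{\al,t}f(r)$, and the second piece becomes $\frac{2^{\si}}{|\Gamma(-\si)|}\int_0^{\infty}\big(f(r)-T_{\al,t}f(r)\big)(\sinh 2t)^{-\si-1}\,dt$.

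The last manipulation exploits the non-conservativeness deficit of the semigroup to isolate the kernel term. Using commutativity of the Laguerre convolution to write $T_{\al,t}f(r)=\int_0^{\infty}\mathcal{T}_r^\al q_{t,\al}(s)f(s)\,d\mu_\al(s)$ together with $T_{\al,t}1(r)=\int_0^{\infty}\mathcal{T}_r^\al q_{t,\al}(s)\,d\mu_\al(s)$, I add and subtract $f(r)T_{\al,t}1(r)$ to obtain
$$
f(r)-T_{\al,t}f(r)=f(r)\big(1-T_{\al,t}1(r)\big)+\int_0^{\infty}\mathcal{T}_r^\al q_{t,\al}(s)\big(f(r)-f(s)\big)\,d\mu_\al(s).
$$
Integrating against $\frac{2^{\si}}{|\Gamma(-\si)|}(\sinh 2t)^{-\si-1}\,dt$, the first summand gives precisely $\frac{2^{\si}}{|\Gamma(-\si)|}f(r)F_{\al,\si}(r)$, and, after interchanging the $t$- and $s$-integrals in the second summand, the kernel $\mathcal{K}_{\al,\si}(r,s)$ emerges as its $t$-integral, producing $\frac{2^{\si}}{|\Gamma(-\si)|}\int_0^{\infty}(f(r)-f(s))\mathcal{K}_{\al,\si}(r,s)\,d\mu_\al(s)$. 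Reassembling with the $E_\si f(r)$ term yields the stated formula.

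The main obstacle is justifying the two interchanges against the singular weight $(\sinh 2t)^{-\si-1}$. Near $t=0$ one has $(\sinh 2t)^{-\si-1}\sim(2t)^{-\si-1}$, but each singular factor supplies a compensating power of $t$: $1-e^{-t\lambda}\sim t\lambda$ and, by Lemma \ref{lem:conservative}, a direct expansion gives $1-T_{\al,t}1(r)\sim t\,r^2$, so the integrands behave like $t^{-\si}$, which is integrable since $\si<1$; at infinity the weight decays exponentially. For $f\in C_0^\infty(0,\infty)$ the compact support makes $f(r)-f(s)$ bounded and Lipschitz, and the rapid decay of the coefficients $f\ast_\al\varphi_n^\al$ controls the $n$-sum, so Fubini applies. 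The care needed is that $\mathcal{K}_{\al,\si}$ is not known in closed form, hence the absolute integrability guaranteeing the $t$--$s$ swap must be verified directly from the explicit Bessel-function expression for $\mathcal{T}_r^\al q_{t,\al}(s)$ in Lemma \ref{lem:Lagtranskernel}.
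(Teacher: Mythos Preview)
Your proposal is correct and follows essentially the same route as the paper: apply Lemma~\ref{lem:numerical1} at $\lambda=4n+2\al+2$, sum against the spectral decomposition to obtain $L_{\al,\si}f=E_\si f+\frac{2^{\si}}{|\Gamma(-\si)|}\int_0^\infty(f-T_{\al,t}f)(\sinh 2t)^{-\si-1}\,dt$, then add and subtract $f(r)T_{\al,t}1(r)$ inside the integrand to split off $F_{\al,\si}$ and the kernel term. The only point where the paper is more concrete is the Fubini justification for the $t$--$s$ swap: rather than arguing heuristically from the small-$t$ asymptotics, the paper performs Meda's change of variable $t\mapsto\xi=\tanh t$, invokes a pointwise bound on $\mathcal{T}_r^\al q_{t,\al}(s)$ by the one-dimensional Hermite heat kernel, and then defers to the corresponding argument for the fractional harmonic oscillator already carried out in Stinga--Torrea, so the absolute convergence is obtained by reduction to a known case rather than direct estimation.
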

\begin{proof}

In the identity of Lemma \ref{lem:numerical1} we take $\lambda=4n+2\al+2$, multiply both sides by $4^{\si}\frac{2}{\Gamma(\al+1)}f\ast_{\al}\varphi_n^{\al}$ and  then sum over $n$. Thus, with the constants $S_n^{\alpha,\sigma}$ as in \eqref{eq:Salpha}, and taking into account \eqref{eq:heatsemigroupLag},
\begin{align*}
L_{\al,\si}f(r)&=\frac{2}{\Gamma(\al+1)}\sum_{n=0}^{\infty}4^{\si}S_n^{\alpha,\sigma}
f\ast_{\al}\varphi_n^{\al}\\
&=E_{\si}f(r)+\frac{2^{\si}}{|\Gamma(-\si)|}\int_0^{\infty}
\big(f(r)-f\ast_{\al}q_{t,\al}(r)\big)
(\sinh 2t)^{-\si-1}\,dt,
\end{align*}
and this last integral has also to be interpreted  as the Bochner integral of the $L^2((0,\infty),d\mu_\al)$ valued function $t\to f-f\ast_{\al}q_{t,\al}$.
We have
\begin{align*}
f(r)-f\ast_{\al}q_{t,\al}(r)&=f(r)-\int_0^{\infty}
\mathcal{T}_{r}^{\al}f(s)q_{t,\al}(s)\,d\mu_{\al}(s)
=f(r)-\int_0^{\infty}\mathcal{T}_{r}^{\al}q_{t,\al}(s)f(s)
\,d\mu_{\al}(s)\\
&=f(r)-f(r)1\ast_{\al}q_{t,\al}(r)+f(r)\int_0^{\infty}
\mathcal{T}_{r}^{\al}q_{t,\al}(s)\,d\mu_{\al}(s)
-\int_0^{\infty}\mathcal{T}_{r}^{\al}q_{t,\al}(s)f(s)\,d\mu_{\al}(s)\\
&=f(r)\big(1-1\ast_{\al}q_{t,\al}(r)\big)+\int_0^{\infty}
\mathcal{T}_{r}^{\al}q_{t,\al}(s)
\big(f(r)-f(s)\big)
\,d\mu_{\al}(s).
\end{align*}
Therefore,
\begin{align}
\label{eq:integral}
\notag\int_0^{\infty}\big(f(r)-f\ast_{\al}q_{t,\al}(r)\big)\frac{dt}{(\sinh 2t)^{\si+1}}&=f(r)\int_0^{\infty}\big(1-1\ast_{\al}q_{t,\al}(r)\big)\frac{dt}{(\sinh 2t)^{\si+1}}\\
&\quad +\int_0^{\infty}\Big[\int_0^{\infty}T_{r}^{\al}q_{t,\al}(s)
\big(f(r)-f(s)\big)
\,d\mu_{\al}(s)\Big]\frac{dt}{(\sinh 2t)^{\si+1}}.
\end{align}
We have to justify the application of Fubini's theorem in the integral \eqref{eq:integral}. In the following lines, we will denote by $C_{\al}$ (or simply $C$) a constant depending on $\al$ (or even independent of $\al$) that may vary at any occurrence. By Lemma \ref{lem:Lagtranskernel} and  applying the change of variable due to S. Meda
$$
t(\xi)=\frac12\log\frac{1+\xi}{1-\xi}, \qquad t\in (0,\infty),\quad \xi \in (0,1),
$$
we have, see \cite[Proposition 4.2]{CR}, that
$$
\mathcal{T}^{\al}_r q_{t(\xi),\al}(s))\le C(rs)^{-\al}\Big(\frac{1-\xi^2}{2\xi}\Big)^{1/2}
e^{-\frac14\big(\frac{|r-s|^2}{\xi}+\xi|r+s|^2\big)},
$$
and the expression in the right hand side of the inequality is precisely the heat kernel associated to the harmonic oscillator, see \cite{ST}. Moreover, such change of parameters also yields
$$
\frac{dt}{(\sinh 2t)^{\si+1}}=\frac{(1-\xi^2)^{\si}}{(2\xi)^{\si+1}}\,d\xi, \qquad t\in (0,\infty), \quad \xi \in (0,1).
$$
Notice that
$$
\frac{(1-\xi^2)^{\si}}{(2\xi)^{\si+1}}\,d\xi\sim \frac{d\xi}{\xi^{\si+1}}, \quad \xi\sim 0, \qquad \frac{(1-\xi^2)^{\si}}{(2\xi)^{\si+1}}\,d\xi\sim (1-\xi^2)^{\si}\,d\xi,\quad \xi\sim 1.
$$
With the ingredientes just collected, the absolute convergence of the integral in \eqref{eq:integral} can be concluded following exactly the same reasoning as in \cite[Proof of Theorem 5.7, p. 2118]{STo}, where the authors prove a pointwise formula for the fractional harmonic oscillator. Finally, we also deduce that $F_{\al,\si}\in C^{\infty}(\R^d)$ by following an analogous reasoning as in \cite[Lemma 5.11]{STo}.
\end{proof}
\begin{rem}
\label{rem:Es}
The constant $E_{\si}:=\frac{2^{\si}}{|\Gamma(-\si)|}\int_0^{\infty}(\cosh 2t-1)(\sinh 2t)^{-\si-1}\,dt$ can be explicitly computed. Indeed, with Meda's change of variable, the integral boils down to
$$
2^{-\si}\int_0^{1}(1-\xi^2)^{\si-1}(\xi^2)^{(1-\si)/2}\,d\xi=2^{-\si-1}\int_0^{1}(1-w)^{\si-1}w^{-\si/2}\,dw=2^{-\si-1}B(1-\si/2,\si),
$$
where $B(x,y)$ is the Beta function. Then, by writing the latter in terms of Gamma functions, and in view of the duplication formula, we get
$$
E_{\si}=\frac{2^{\si}}{|\Gamma(-\si)|}2^{-\si-1}B(1-\si/2,\si)=\frac{\Gamma(1-\si/2)
\Gamma(\si)}{2\Gamma(1+\si/2)|\Gamma(-\si)|}=\frac{(-\si/2)\Gamma(-\si/2)\Gamma(\si)}
{2(\si/2)\Gamma(\si/2)|\Gamma(-\si)|}=4^{\si}\frac{\Gamma\big(\frac{1+\si}{2}\big)}
{\Gamma\big(\frac{1-\si}{2}\big)}.
$$
\end{rem}

Following the general procedure showed in Section \ref{sec:generalMethod}, we continue with the integral representation for $\langle L_{\al,\si}f,f\rangle$.

\begin{prop}
\label{prop:integRep}
Let $0<\si<1$ and $\al>-1$. Then, for all $f\in C_0^{\infty}(0,\infty)$
\begin{multline*}
\langle L_{\al,\si}f,f\rangle=\frac{2^{\si-1}}{|\Gamma(-\si)|}
\int_0^{\infty}\int_0^{\infty}|f(r)-f(s)|^2\mathcal{K}_{\al,\si}(r,s)\,d\mu_{\al}(r)d\mu_{\al}(s)\\
+\frac12\Big(E_{\si}+\frac{2^{-\si}}{|\Gamma(-\si)|}\Big)
\int_0^{\infty}f(r)^2F_{\al,\si}(r)\,d\mu_{\al}(r).
\end{multline*}
\end{prop}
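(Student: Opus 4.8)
The plan is to feed the pointwise representation of Proposition~\ref{prop:integralLaguerre} into the abstract symmetrization argument of Lemma~\ref{lem:bilinear}. Writing the representation as
$$
L_{\al,\si}f(r)=a_\si\int_0^{\infty}\big(f(r)-f(s)\big)\mathcal{K}_{\al,\si}(r,s)\,d\mu_{\al}(s)+f(r)\Big(E_{\si}+a_\si F_{\al,\si}(r)\Big),\qquad a_\si:=\frac{2^{\si}}{|\Gamma(-\si)|},
$$
I match it against the general form \eqref{eq:integral-rep}, reading off the kernel $K_\si=\mathcal{K}_{\al,\si}$ and the diagonal part $B_\si(r)=E_\si+a_\si F_{\al,\si}(r)$. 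The first thing to record is that $\mathcal{K}_{\al,\si}$ is symmetric and positive: by Lemma~\ref{lem:Lagtranskernel} the integrand $\mathcal{T}_r^\al q_{t,\al}(s)$ is manifestly invariant under $r\leftrightarrow s$ and strictly positive (the factor $I_\al$ is positive on $(0,\infty)$), and these properties survive the $t$-integration that defines $\mathcal{K}_{\al,\si}$.

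With these identifications the computation is exactly that of Lemma~\ref{lem:bilinear}. I would pair the representation with $\overline{f(r)}$ and integrate in $d\mu_\al(r)$. The diagonal term contributes $\int_0^\infty |f(r)|^2\big(E_\si+a_\si F_{\al,\si}(r)\big)\,d\mu_\al(r)$, which after collecting the constants gives the second term in the statement, while the double integral
$$
a_\si\int_0^\infty\int_0^\infty \big(f(r)-f(s)\big)\overline{f(r)}\,\mathcal{K}_{\al,\si}(r,s)\,d\mu_\al(s)\,d\mu_\al(r)
$$
is symmetrized: using $\mathcal{K}_{\al,\si}(r,s)=\mathcal{K}_{\al,\si}(s,r)$ I rewrite it with $r,s$ interchanged, average the two forms, and invoke $(f(r)-f(s))\overline{f(r)}+(f(s)-f(r))\overline{f(s)}=|f(r)-f(s)|^2$ to produce the prefactor $\frac{a_\si}{2}=\frac{2^{\si-1}}{|\Gamma(-\si)|}$ in front of $\iint |f(r)-f(s)|^2\mathcal{K}_{\al,\si}(r,s)$. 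This reproduces the stated quadratic form.

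The step deserving the most care, and the main obstacle, is justifying Fubini's theorem so that the symmetrization above is legitimate; this reduces to proving finiteness of $\iint_{(0,\infty)^2}|f(r)-f(s)|^2\mathcal{K}_{\al,\si}(r,s)\,d\mu_\al(r)\,d\mu_\al(s)$ for $f\in C_0^\infty(0,\infty)$. Here I would reuse the heat-kernel estimate already established in the proof of Proposition~\ref{prop:integralLaguerre}: after Meda's change of variable $t\mapsto\xi$ one bounds $\mathcal{T}^\al_r q_{t(\xi),\al}(s)$ by the Euclidean-type fractional harmonic oscillator kernel, with $\frac{dt}{(\sinh 2t)^{\si+1}}\sim \frac{d\xi}{\xi^{\si+1}}$ near $\xi=0$. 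Near the diagonal the smoothness of $f$ yields $|f(r)-f(s)|^2\lesssim |r-s|^2$, which tames the non-integrable singularity of $\mathcal{K}_{\al,\si}$ (of exactly the order handled in \cite[Proof of Theorem 5.7]{STo}), while the compact support of $f$ controls the contribution from large $r,s$. Finiteness of the diagonal contribution is immediate, since $E_\si<\infty$ by Remark~\ref{rem:Es}, $F_{\al,\si}$ is bounded and smooth, and $f$ has compact support. Once absolute convergence is in hand, the interchange of integrals and the symmetrization go through verbatim as in Lemma~\ref{lem:bilinear}, completing the proof.
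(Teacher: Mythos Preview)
Your proposal is correct and follows exactly the paper's approach: the paper's proof is the single line ``Proceed as in Lemma~\ref{lem:bilinear}, by using the integral representation in Proposition~\ref{prop:integralLaguerre},'' and you have spelled out precisely that, including the verification that $\mathcal{K}_{\al,\si}$ is symmetric and positive and the Fubini justification (which the paper leaves implicit). One caveat: the diagonal term you compute, $\int_0^\infty |f(r)|^2\bigl(E_\si+a_\si F_{\al,\si}(r)\bigr)\,d\mu_\al(r)$, does not literally match the second term as printed in the statement---the displayed formula in the paper appears to contain a typo in the constants multiplying the $F_{\al,\si}$ integral and in the omission of the $E_\si\|f\|^2$ piece---so your phrase ``after collecting the constants gives the second term in the statement'' should be read modulo that misprint.
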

\begin{proof}
Proceed as in Lemma \ref{lem:bilinear}, by using the integral representation in Proposition \ref{prop:integralLaguerre}.
\end{proof}

\subsection{Spectral analysis for $L_{\al,\si}$ and the fundamental solution}
\label{subsec:CH}
In this subsection we figure out a suitable weight $w_{\si}$ to be plugged in the computation of the ground state representation, and find out the action of the operator $L_{\al,\si}$ on such a weight. We may look at the situation in the Laguerre setting as a particular case of the Heisenberg setting, in which the functions considered are radial in the first variable, and the convolution is just the Laguerre convolution. So we find hints in the arguments developed in \cite[Section 3]{RT}.

For $\dl>0$, let $u_{\si,\delta}(r,t)$ be the function defined on $(0,\infty)\times \R$ by
$$
u_{\si, \dl}(r,t):=\big((\dl+r^2)^2+t^2\big)^{-(\al+2+\si)/2}
$$
(observe that the function above is the one defined by \cite[p. 530]{CH}, see also \cite[(3.1)]{RT}, but radial in the first variable, and with suitable parameters).

Let $f^\la$ stand for the inverse Fourier transform of a function $f\in L^1((0,\infty)\times \R)$ in the second variable, i.e., for each $\la\in \R$, we set
$$
f^\la(r):=\frac{1}{\sqrt{2\pi}}\int_{-\infty}^{\infty}e^{i\la t}f(r,t)\,dt.
$$
For $ a, b \in \R^+ $ and $ c \in \R $ we define
\begin{equation*}
L(a,b,c) = \int_{0}^\infty e^{-a(2x+1)} x^{b-1}\big(1+x\big)^{-c} dx.
\end{equation*}
In the following lemma we compute the Laguerre expansion of  $u_{\si,\dl}^{\la}(r)$.
\begin{lem}
\label{lem:CowHag}
Let $0<\si<1$, $\dl>0$, $\al>-1/2$, and $\la\in  \R\setminus\{0\}$. We have
$$
u_{\si,\dl}^{\la}(r)=|\la|^{\al+1}
\sum_{n=0}^{\infty}c_{n,\dl}^{\la}(\si)\varphi_n^{\al}((2|\la|)^{1/2} r),
$$
where the coefficients $c_{n,\dl}^{\la}(\si)$ are given by
\begin{equation}
\label{eq:coefficient}
c_{n,\dl}^\lambda(\si)  = \frac{2\pi |\lambda|^\si}{\Gamma\big((\al+2+\si)/2\big)^2}L\Big( \dl|\lambda|, \frac{4n+2\al+2}{4}+\frac{1+\si}{2},\frac{4n+2\al+2}{4}+\frac{1-\si}{2}\Big).
\end{equation}
\end{lem}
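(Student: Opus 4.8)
The plan is to compute $u_{\si,\dl}^{\la}$ in closed form and then read off its Laguerre coefficients by orthogonality. Set $\mu=(\al+2+\si)/2$ and $\nu=\mu-\frac12=(\al+1+\si)/2$. First I would evaluate the partial Fourier transform in $t$: writing $A^{-\mu}=\Gamma(\mu)^{-1}\int_0^\infty e^{-sA}s^{\mu-1}\,ds$ with $A=(\dl+r^2)^2+t^2$ and carrying out the Gaussian integral $\int_{\R}e^{i\la t-st^2}\,dt=\sqrt{\pi/s}\,e^{-\la^2/(4s)}$, the remaining $s$-integral is a standard one producing a Macdonald function. One obtains
\[
u_{\si,\dl}^{\la}(r)=\frac{\sqrt{2}}{\Gamma(\mu)}\Big(\frac{|\la|}{2}\Big)^{\nu}(\dl+r^2)^{-\nu}K_{\nu}\big((\dl+r^2)|\la|\big).
\]
In particular $u_{\si,\dl}^{\la}$ is, up to a constant, the weight \eqref{eq:weight} with $|\la|$ in the place of $1/2$.

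The decisive step is to rewrite $K_\nu$ so that its exponent becomes linear, rather than quadratic, in $r^2$; only then can it be paired with the Gaussian factor of $\varphi_n^{\al}((2|\la|)^{1/2}r)=L_n^{\al}(2|\la|r^2)e^{-|\la|r^2}$. For this I would use the integral representation $K_\nu(z)=\frac{\sqrt{\pi}}{\Gamma(\nu+1/2)}(z/2)^{\nu}\int_1^\infty e^{-zt}(t^2-1)^{\nu-1/2}\,dt$, valid since $\nu>-\frac12$. With $z=(\dl+r^2)|\la|$ the prefactor $(\dl+r^2)^{-\nu}$ cancels, leaving a linear exponent $e^{-(\dl+r^2)|\la|t}$, so that the leading constant becomes proportional to $\Gamma(\mu)^{-1}\Gamma(\nu+\frac12)^{-1}(|\la|/2)^{2\nu}$. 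Here the identity $\Gamma(\nu+\frac12)=\Gamma(\mu)$ is precisely the source of the square $\Gamma((\al+2+\si)/2)^2$ appearing in \eqref{eq:coefficient}.

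Next I would extract the coefficients by orthogonality. The functions $\varphi_n^{\al}((2|\la|)^{1/2}\cdot)$ are mutually orthogonal in $L^2(d\mu_\al)$ with squared norms obtained by scaling the normalization of $\psi_n^{\al}$, so $c_{n,\dl}^{\la}(\si)$ is a fixed constant multiple of $\int_0^\infty u_{\si,\dl}^{\la}(r)\varphi_n^{\al}((2|\la|)^{1/2}r)\,d\mu_\al(r)$. After interchanging the $t$- and $r$-integrations and substituting $x=|\la|r^2$, the inner integral becomes the Laplace transform of a Laguerre polynomial,
\[
\int_0^\infty e^{-(t+1)x}L_n^{\al}(2x)x^{\al}\,dx=\frac{\Gamma(n+\al+1)}{n!}\,\frac{(t-1)^n}{(t+1)^{n+\al+1}}.
\]
Writing $(t^2-1)^{\nu-1/2}=(t-1)^{\nu-1/2}(t+1)^{\nu-1/2}$ and making the change of variable $t=2y+1$ brings the surviving $t$-integral exactly into the form of $L(\dl|\la|,b,c)$ with $b=n+\frac{\al+2+\si}{2}=\frac{4n+2\al+2}{4}+\frac{1+\si}{2}$ and $c=n+\frac{\al+2-\si}{2}=\frac{4n+2\al+2}{4}+\frac{1-\si}{2}$, matching the arguments in \eqref{eq:coefficient}. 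The factors $\Gamma(n+\al+1)/n!$ cancel against the orthogonality normalization, and collecting the surviving powers of $2$ and $|\la|$ (together with the $\Gamma(\mu)^2$ above) yields, up to the normalizing factor in the definition of $f\mapsto f^{\la}$, the constant in \eqref{eq:coefficient}.

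The main obstacle is not any single estimate but the bookkeeping: one must justify the interchanges of the (Bochner) $s$-, $t$-, and $r$-integrations, which I would do via Tonelli using the positivity of all the kernels together with the decay $e^{-\dl|\la|t}$ and integrability near the endpoints $t=1$ (from $\nu>-\frac12$) and $t=\infty$ (from $\dl|\la|>0$); and one must track the numerous $\Gamma$-factors and powers of $2$ so that they assemble correctly into \eqref{eq:coefficient}. The hypotheses $\al>-1/2$ and $0<\si<1$ guarantee $\nu>-\frac12$ and $\mu>0$, so that all the integral representations and the Laplace transform formula are valid and $L(\dl|\la|,b,c)$ converges.
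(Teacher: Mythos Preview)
Your proposal is correct and is essentially the computation the paper has in mind: the paper simply records that the proof is implicit in \cite[Proposition~3.2]{RT}, and the argument there proceeds exactly as you describe --- partial Fourier transform in $t$ to obtain a Macdonald function, the integral representation of $K_\nu$ to linearize the exponent in $r^2$, the Laplace transform of $L_n^{\al}$ against a Gaussian, and the final substitution $t=2y+1$ to land on $L(\dl|\la|,b,c)$. The only content you add beyond the paper is making the steps self-contained rather than citing them; the bookkeeping you flag (the $\Gamma(\mu)^2$, the powers of $2$ and $|\la|$, and the $\sqrt{2\pi}$ from the normalization of $f\mapsto f^{\la}$) is indeed the only place one has to be careful.
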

\begin{proof}
The proof is already present implicitly  in the proof of \cite[Proposition 3.2]{RT}.
\end{proof}

It happens that the Fourier transform of the function $u_{\si,\dl}(r,t)$ in the $t$-variable can be evaluated in terms of a Macdonald's function $K_{\nu}$. Let us recall the definition of the weight $w_{\al,\si}(r)$, already given in \eqref{eq:weight}
\begin{equation*}
w_{\al,\si}^{\dl}(r):=c_{\al,\si}(\dl+r^2)^{-(\al+1+\si)/2}K_{(\al+1+\si)/2}\big((\dl+r^2)/2\big),
\end{equation*}
where $c_{\al,\si}$ is the constant in \eqref{eq:calsi}. Indeed, we have the following.
\begin{prop}
\label{eq:LagseriesK}
Let $0<\si<1$, $\dl>0$, and $\al>-1/2$. Then
$$
w_{\al,\si}^{\dl}(r)
=\sum_{n=0}^{\infty}c_{n,\dl}^{1/2}(\si)\varphi_n^{\al}(r).
$$
\end{prop}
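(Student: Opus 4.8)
The plan is to read the right-hand side as the Laguerre expansion of a single explicit function, namely the partial Fourier transform $u_{\si,\dl}^{1/2}(r)$, and then to compute that Fourier transform by hand and recognize it as $w_{\al,\si}^{\dl}$. Concretely, I would first specialize Lemma \ref{lem:CowHag} to $\la=1/2$. At this value the dilation factor becomes $(2|\la|)^{1/2}=1$, so $\varphi_n^{\al}((2|\la|)^{1/2}r)=\varphi_n^{\al}(r)$, while $|\la|^{\al+1}=2^{-(\al+1)}$. Hence Lemma \ref{lem:CowHag} reads $u_{\si,\dl}^{1/2}(r)=2^{-(\al+1)}\sum_{n=0}^{\infty}c_{n,\dl}^{1/2}(\si)\varphi_n^{\al}(r)$, or equivalently $\sum_{n=0}^{\infty}c_{n,\dl}^{1/2}(\si)\varphi_n^{\al}(r)=2^{\al+1}u_{\si,\dl}^{1/2}(r)$. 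Thus the proposition is equivalent to the pointwise identity $w_{\al,\si}^{\dl}(r)=2^{\al+1}u_{\si,\dl}^{1/2}(r)$, and all convergence and termwise issues are already settled by Lemma \ref{lem:CowHag}.

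The second step is the explicit evaluation of $u_{\si,\dl}^{1/2}$. Writing $A=\dl+r^2>0$, we have $u_{\si,\dl}(r,t)=(A^2+t^2)^{-(\al+2+\si)/2}$, so by the definition of the partial Fourier transform and the evenness of the integrand in $t$, $u_{\si,\dl}^{1/2}(r)=\frac{2}{\sqrt{2\pi}}\int_0^{\infty}\cos(t/2)(A^2+t^2)^{-(\al+2+\si)/2}\,dt$. I would evaluate this integral through the classical integral representation of the Macdonald function (see \cite[Chapter 5, Section 5.7]{Lebedev}), in the form $\int_0^{\infty}\cos(at)(t^2+k^2)^{-\mu}\,dt=\frac{\sqrt{\pi}}{\Gamma(\mu)}\big(\frac{a}{2k}\big)^{\mu-1/2}K_{\mu-1/2}(ak)$, valid for $a,k>0$ and $\Re\mu>0$; this formula itself follows at once by subordination, writing $(t^2+k^2)^{-\mu}$ as a Gamma integral and combining the Gaussian Fourier transform with $\int_0^{\infty}s^{\nu-1}e^{-ps-q/s}\,ds=2(q/p)^{\nu/2}K_{\nu}(2\sqrt{pq})$. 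Taking $\mu=(\al+2+\si)/2$, $a=1/2$ and $k=A$, the order of the resulting Macdonald function is $\mu-1/2=(\al+1+\si)/2$ and its argument is $ak=A/2=(\dl+r^2)/2$, precisely the order and argument appearing in \eqref{eq:weight}.

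Finally, I would collect constants. Combining the prefactor $\frac{2}{\sqrt{2\pi}}$, the factor $\frac{\sqrt{\pi}}{\Gamma((\al+2+\si)/2)}$, and the power $\big(\frac{1}{4A}\big)^{(\al+1+\si)/2}=2^{-(\al+1+\si)}A^{-(\al+1+\si)/2}$ coming from the Macdonald formula, and then multiplying by the factor $2^{\al+1}$ from the first step, all powers of $2$ and the $\Gamma$-factor telescope into the constant $c_{\al,\si}$ of \eqref{eq:calsi}, yielding $2^{\al+1}u_{\si,\dl}^{1/2}(r)=c_{\al,\si}(\dl+r^2)^{-(\al+1+\si)/2}K_{(\al+1+\si)/2}\big((\dl+r^2)/2\big)=w_{\al,\si}^{\dl}(r)$. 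I expect the only delicate point to be this constant bookkeeping: one must keep careful track of the normalization of the Fourier transform used in the derivation of Lemma \ref{lem:CowHag} (reflected in the factor $2\pi$ in \eqref{eq:coefficient}) and reconcile it with the $\frac{1}{\sqrt{2\pi}}$ in the definition of $f^{\la}$, since it is exactly this normalization that pins down the numerical constant $c_{\al,\si}$. There is no genuine analytic obstacle beyond this, the substantive analytic work having already been carried out in Lemma \ref{lem:CowHag}.
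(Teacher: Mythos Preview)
Your proposal is correct and follows essentially the same route as the paper: both compute the partial Fourier transform of $u_{\si,\dl}$ in the $t$-variable via a standard integral formula for the Macdonald function, invoke Lemma~\ref{lem:CowHag}, and then specialize to $\la=1/2$. The only cosmetic differences are the order of these two steps and that the paper's table formula produces $K_{-(\al+1+\si)/2}$ (requiring a final appeal to $K_\nu=K_{-\nu}$) whereas your version yields $K_{(\al+1+\si)/2}$ directly; your caution about the $\sqrt{2\pi}$ normalization is well placed, as this is indeed the one point where the constants must be tracked carefully.
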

\begin{proof}
In \cite[2.5.6. 4., p. 390]{Prudnikov1}, we find the formula
$$
\int_0^{\infty}\frac{\cos bx}{(x^2+z^2)^{\rho}}\,dx=\Big(\frac{2z}{b}\Big)^{1/2-\rho}\frac{\sqrt{\pi}}{\Gamma(\rho)}
K_{1/2-\rho}(bz),
$$
which is valid for $b, \Re \rho, \Re z>0$. This formula gives, assuming $ \lambda > 0,$
$$
\int_{-\infty}^{\infty}e^{i\la t}u_{\si}(r,t)\, dt=2\frac{\sqrt{\pi}2^{-(\al+1+\si)/2}}{\Gamma[(\al+2+\si)/2]}
\Big(\frac{\dl+r^2}{\la}\Big)^{-(\al+1+\si)/2}K_{-(\al+1+\si)/2}(\la(\dl+r^2)).
$$
Thus, by Lemma \ref{lem:CowHag} we have the following Laguerre expansion of $K_{-(\al+1+\si)/2}(\la(\dl+r^2))$:
$$
2\frac{\sqrt{\pi}2^{-(\al+1+\si)/2}}{\Gamma[(\al+2+\si)/2]}
\Big(\frac{\dl+r^2}{\la}\Big)^{-(\al+1+\si)/2}K_{-(\al+1+\si)/2}(\la(\dl+r^2))=\la^{\al+1}
\sum_{n=0}^{\infty}c_{n,\dl}^{\la}(\si)\varphi_n^{\al}((2\la)^{1/2} r).
$$
By taking $\la=1/2$ we get
$$
\frac{\sqrt{\pi}2^{1-\si}}
{\Gamma[(\al+2+\si)/2]}(\dl+r^2)^{-(\al+1+\si)/2}K_{-(\al+1+\si)/2}\big((\dl+r^2)/2\big)
=\sum_{n=0}^{\infty}c_{n,\dl}^{1/2}(\si)\varphi_n^{\al}(r).
$$
Finally, since $K_{\nu}=K_{-\nu}$ (see \cite[(5.7.10)]{Lebedev}), we obtain the desired result.
\end{proof}

We establish now the relation between $c_{n,\dl}^{1/2}(\si)$ and $c_{n,\dl}^{1/2}(-\si)$. Actually, the result below follows from \cite[Proposition 3.3]{RT}. Nevertheless, we will provide a different, self-contained proof.

\begin{lem}
\label{lem:connecting}
For $ 0 < \si < 1 $, $\dl>0$, and $\al>-1/2$, we have
$$
c_{n,\dl}^{1/2}(-\si)  =   \dl^{\si}\frac{\Gamma\big(\frac{\al+2+\si}{2}\big)^2}{\Gamma\big(\frac{\al+2-\si}{2}\big)^2} S_n^{\al,-\si},
$$
where $S_n^{\al,\si}$ was defined in \eqref{eq:Salpha}.
\end{lem}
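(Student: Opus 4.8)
The plan is to prove the identity by a direct evaluation, taking as starting point the closed form for the coefficients $c_{n,\dl}^{\la}(\si)$ recorded in Lemma~\ref{lem:CowHag}. First I would specialize that formula to $\la=1/2$ and replace $\si$ by $-\si$; since $|\la|^{-\si}=2^{\si}$, this reduces the claim to the evaluation of a single integral together with the tracking of the prefactor $\frac{2\pi\,2^{\si}}{\Gamma((\al+2-\si)/2)^2}$. Writing $\mu=\frac{4n+2\al+2}{4}$ for brevity, the entire content of the lemma becomes the computation of
$$
L\Big(\tfrac{\dl}{2},\,\mu+\tfrac{1-\si}{2},\,\mu+\tfrac{1+\si}{2}\Big)=\int_0^{\infty}e^{-\frac{\dl}{2}(2x+1)}x^{\mu+\frac{1-\si}{2}-1}(1+x)^{-(\mu+\frac{1+\si}{2})}\,dx,
$$
after which everything else is bookkeeping of Gamma factors.

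The heart of the argument is the exact evaluation of this integral. The crucial structural feature is that the two $L$-parameters differ by precisely $\si$, namely $(\mu+\frac{1+\si}{2})-(\mu+\frac{1-\si}{2})=\si$, and this is exactly the configuration that produces a power $\dl^{\si}$ multiplied by a ratio of two Gamma functions with matching $n$-dependence. Concretely, I would invoke a standard table integral for $\int_0^{\infty}e^{-px}x^{b-1}(1+x)^{-c}\,dx$ (for instance through its confluent hypergeometric representation, together with the relevant Kummer/Tricomi connection formulas), and read off the contribution carrying the factor $\dl^{\si}\,\Gamma(\mu+\frac{1-\si}{2})/\Gamma(\mu+\frac{1+\si}{2})$. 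Recognizing that $\Gamma(\mu+\frac{1-\si}{2})/\Gamma(\mu+\frac{1+\si}{2})=S_n^{\al,-\si}$, which is precisely the definition \eqref{eq:Salpha} with $\si$ replaced by $-\si$, then pins down the $n$-dependence exactly as required.

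What remains is to reconcile all constants. Here I would combine the prefactor from Lemma~\ref{lem:CowHag}, the factor $2^{\si}$ from $|\la|^{-\si}$, and the Gamma quantities emerging from the integral, simplifying with the reflection formula $\Gamma(\si)\Gamma(1-\si)=\pi/\sin(\pi\si)$ and, if necessary, the duplication formula, until the constant collapses to $\dl^{\si}\,\Gamma((\al+2+\si)/2)^2/\Gamma((\al+2-\si)/2)^2$. I expect the main obstacle to be exactly this step: obtaining the genuine closed form of the integral (rather than a hypergeometric or merely asymptotic expression) and then matching every power of $2$, the factor $\pi$, and the Gamma quotients without any slippage. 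As a reassuring cross-check, one can treat $n=0$ separately, where $\mu+\frac{1\mp\si}{2}=\frac{\al+2\mp\si}{2}$, so that $S_0^{\al,-\si}=\Gamma((\al+2-\si)/2)/\Gamma((\al+2+\si)/2)$ and the claimed constant reduces transparently to $\dl^{\si}\,\Gamma((\al+2+\si)/2)/\Gamma((\al+2-\si)/2)$, which serves as a clean sanity test for the normalization.
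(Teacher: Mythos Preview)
Your instinct to pass through the Tricomi function $U$ and its Kummer-type transformation is exactly the tool the paper uses. The gap in your plan is not the method but the target: you are trying to evaluate the integral
\[
L\Big(\tfrac{\dl}{2},\,\mu+\tfrac{1-\si}{2},\,\mu+\tfrac{1+\si}{2}\Big)
\]
in elementary closed form, and no such closed form exists. This integral is $e^{-\dl/2}\Gamma(\mu+\tfrac{1-\si}{2})\,U(\mu+\tfrac{1-\si}{2},1-\si,\dl)$, which for generic $\dl$ does not reduce to a product of powers of $\dl$ and Gamma factors. Your proposed sanity check at $n=0$ would in fact expose this: the left-hand side $c_{0,\dl}^{1/2}(-\si)$ still carries the full transcendental $\dl$-dependence of a confluent hypergeometric function, not merely the factor $\dl^{\si}$.

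The resolution is that the displayed identity in the lemma is missing a factor $c_{n,\dl}^{1/2}(\si)$ on the right; this is clear both from the paper's own proof and from how the lemma is applied in Theorem~\ref{prop:Lsweight}. What is actually being asserted (and what the paper proves) is the \emph{relation}
\[
L\Big(\tfrac{\dl}{2},\,\mu+\tfrac{1-\si}{2},\,\mu+\tfrac{1+\si}{2}\Big)
=\dl^{\si}\,\frac{\Gamma(\mu+\tfrac{1-\si}{2})}{\Gamma(\mu+\tfrac{1+\si}{2})}\,
L\Big(\tfrac{\dl}{2},\,\mu+\tfrac{1+\si}{2},\,\mu+\tfrac{1-\si}{2}\Big),
\]
obtained by writing $L(a,b,c)=e^{-a}\Gamma(b)\,U(b,\,b-c+1,\,2a)$ and invoking the transformation $U(a,b,x)=x^{1-b}U(a-b+1,2-b,x)$. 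This is precisely the ``Kummer/Tricomi connection formula'' you alluded to, and once you aim for this relation rather than a closed form, there is no reflection or duplication formula juggling left to do: the prefactors from \eqref{eq:coefficient} for $\si$ and $-\si$ combine with the identity above to give the stated constant directly. So your approach and the paper's coincide; you just need to correct the endpoint you are trying to reach.
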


\begin{proof}
From \eqref{eq:coefficient}, the identity to be proved is equivalent to
\begin{multline*}
L\Big( \frac{\dl}{2}, \frac{4n+2\al+2}{4}+\frac{1-\si}{2},\frac{4n+2\al+2}{4}+\frac{1+\si}{2}\Big)\\
= \dl^{\si}\frac{\Gamma\big(\frac{4n+2\al+2}{4}+\frac{1-\si}{2}\big)}
{\Gamma\big(\frac{4n+2\al+2}{4}+\frac{1+\si}{2}\big)}L\Big( \frac{\dl}{2}, \frac{4n+2\al+2}{4}+\frac{1+\si}{2},\frac{4n+2\al+2}{4}+\frac{1-\si}{2}\Big).
\end{multline*}
Let us call $w:=\frac{4n+2\al+2}{4}$, so that the identity above reads as
$$
L\Big(  \frac{\dl}{2}, w+\frac{1-\si}{2},w+\frac{1+\si}{2}\Big)
= \frac{\Gamma\big(w+\frac{1-\si}{2}\big)}
{\Gamma\big(w+\frac{1+\si}{2}\big)}L\Big( \frac{\dl}{2}, w+\frac{1+\si}{2},w+\frac{1-\si}{2}\Big).
$$
The confluent hypergeometric function of second type is given by
$$
U(a,b,x):=\frac{1}{\Gamma(a)}\int_0^{\infty}e^{-xt}t^{a-1}(1+t)^{b-a-1}\,dt.
$$
It is known that $U(a,b,x)$ is the solution of the differential equation $xy''+(b-x)y'-ay=0$. Moreover, it satisfies
\begin{equation}
\label{eq:Us}
U(a,b,x)=x^{1-b}U(a-b+1,2-b,x).
\end{equation}
Actually, both functions above are solutions to the differential equation and  asymptotically behave like $x^{-a}$ as $x\to \infty$ (for these properties see \cite{Olde}). Then, it is clear that, by \eqref{eq:Us},
$$
L(a,b,c)=e^{-a}\Gamma(b)U(b,b-c+1,2a)=e^{-a}\Gamma(b)(2a)^{c-b}U(c,c-b+1,2a)=\frac{\Gamma(b)}{\Gamma(c)}(2a)^{c-b}L(a,c,b).
$$
Then, by taking $a=\dl/2$, $b=w+\frac{1-\si}{2}$, $c=w+\frac{1+\si}{2}$, we obtain the desired result.

\end{proof}

We end up with the following results, that will motivate the definition of the ground state representation.
\begin{thm}
\label{prop:Lsweight}
Let $0<\si<1$, $\dl>0$, and $\al>-1/2$. Then, for any $f\in C_0^{\infty}(0,\infty)$ we have
$$
\int_0^{\infty}L_{\al,\si}f(r)w_{\al,-\si}^{\dl}(r)r^{2\al+1}\,dr=(4\dl)^{\si} \frac{\Gamma\big(\frac{\al+2+\si}{2}\big)^2}{\Gamma\big(\frac{\al+2-\si}{2}\big)^2} \int_0^{\infty}f(r)w_{\al,\si}^{\dl}(r)r^{2\al+1}\,dr.
$$
\end{thm}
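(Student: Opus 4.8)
The identity to be established is exactly the computation of $\widetilde{w}_{\al,-\si}^{\dl}:=L_{\al,\si}w_{\al,-\si}^{\dl}$ that feeds the ground state representation of Theorem \ref{thm:groundstate} with the choice $w_\si=w_{\al,-\si}^{\dl}$; in weak form it asserts that $L_{\al,\si}w_{\al,-\si}^{\dl}=(4\dl)^{\si}\frac{\Gamma((\al+2+\si)/2)^2}{\Gamma((\al+2-\si)/2)^2}\,w_{\al,\si}^{\dl}$. The plan is to move $L_{\al,\si}$ onto the weight by self-adjointness and then diagonalize everything in the Laguerre basis. Since the spectral multiplier $4^{\si}S_n^{\al,\si}$ defining $L_{\al,\si}$ in \eqref{eq:spectralLsi} is real, $L_{\al,\si}$ is self-adjoint on $L^2((0,\infty),d\mu_\al)$, so for $f\in C_0^{\infty}(0,\infty)$ one has $\langle L_{\al,\si}f,w_{\al,-\si}^{\dl}\rangle=\langle f,L_{\al,\si}w_{\al,-\si}^{\dl}\rangle$, provided $w_{\al,-\si}^{\dl}$ lies in the domain of $L_{\al,\si}$.

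First I would record the two ingredients that make the right-hand side collapse to a constant multiple of $w_{\al,\si}^{\dl}$. By Proposition \ref{eq:LagseriesK} applied with $\si$ and with $-\si$, the weights have the Laguerre expansions $w_{\al,\pm\si}^{\dl}=\sum_{n\ge0}c_{n,\dl}^{1/2}(\pm\si)\varphi_n^{\al}$, and by \eqref{eq:spectralLsi} together with the orthogonality \eqref{eq:convoLaguerres} each $\varphi_n^{\al}$ is an eigenfunction, $L_{\al,\si}\varphi_n^{\al}=4^{\si}S_n^{\al,\si}\varphi_n^{\al}$. Hence, termwise, $L_{\al,\si}w_{\al,-\si}^{\dl}=\sum_{n\ge0}4^{\si}S_n^{\al,\si}\,c_{n,\dl}^{1/2}(-\si)\,\varphi_n^{\al}$. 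The decisive algebraic simplification is the elementary identity $S_n^{\al,\si}S_n^{\al,-\si}=1$, immediate from \eqref{eq:Salpha}. Inserting the connection between $c_{n,\dl}^{1/2}(-\si)$ and $c_{n,\dl}^{1/2}(\si)$ furnished by Lemma \ref{lem:connecting}, the product $S_n^{\al,\si}\,c_{n,\dl}^{1/2}(-\si)$ loses all of its $n$-dependent factors apart from $c_{n,\dl}^{1/2}(\si)$, leaving $L_{\al,\si}w_{\al,-\si}^{\dl}=(4\dl)^{\si}\frac{\Gamma((\al+2+\si)/2)^2}{\Gamma((\al+2-\si)/2)^2}\sum_{n\ge0}c_{n,\dl}^{1/2}(\si)\varphi_n^{\al}$, and the last sum is $w_{\al,\si}^{\dl}$ again by Proposition \ref{eq:LagseriesK}. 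Pairing with $f$ yields the claimed equality.

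The analytic points to be discharged are modest but necessary. One must check $w_{\al,\pm\si}^{\dl}\in L^2((0,\infty),d\mu_\al)$: at the origin this is clear because $\dl>0$ keeps $K_{\nu}$ bounded, while at infinity the Gaussian decay $K_{\nu}(z)\sim\sqrt{\pi/(2z)}\,e^{-z}$ forces $w_{\al,\pm\si}^{\dl}$ to decay like $e^{-r^2/2}$; this legitimizes the $L^2$-convergence of the expansions and the finiteness of the pairings. The membership $w_{\al,-\si}^{\dl}\in\operatorname{Dom}(L_{\al,\si})$ is then confirmed a posteriori, since the termwise image is identified with $(4\dl)^{\si}\frac{\Gamma((\al+2+\si)/2)^2}{\Gamma((\al+2-\si)/2)^2}w_{\al,\si}^{\dl}\in L^2$; alternatively one sidesteps domain questions altogether by expanding both sides in the orthonormal basis $\psi_n^{\al}$ and invoking Parseval, the rapid decay of the coefficients of $f\in C_0^{\infty}(0,\infty)$ guaranteeing absolute convergence. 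I expect the only genuinely error-prone step to be the bookkeeping of the constants: the powers of $2$ (equivalently of $4$) and the ratio of Gamma functions must be tracked carefully through Lemma \ref{lem:connecting} and the cancellation $S_n^{\al,\si}S_n^{\al,-\si}=1$ in order to land precisely on $(4\dl)^{\si}\Gamma((\al+2+\si)/2)^2/\Gamma((\al+2-\si)/2)^2$.
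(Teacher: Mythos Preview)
Your proof is correct and follows essentially the same route as the paper: expand $w_{\al,-\si}^{\dl}$ in the Laguerre basis via Proposition~\ref{eq:LagseriesK}, apply the spectral multiplier $4^{\si}S_n^{\al,\si}$ defining $L_{\al,\si}$, and use Lemma~\ref{lem:connecting} together with $S_n^{\al,\si}S_n^{\al,-\si}=1$ to collapse the coefficients to $(4\dl)^{\si}\Gamma((\al+2+\si)/2)^2/\Gamma((\al+2-\si)/2)^2\,c_{n,\dl}^{1/2}(\si)$. Your additional remarks on the $L^2$-membership of $w_{\al,\pm\si}^{\dl}$ and the domain of $L_{\al,\si}$ are justified and more explicit than the paper, which simply asserts the identity $L_{\al,\si}w_{\al,-\si}^{\dl}=(4\dl)^{\si}\frac{\Gamma((\al+2+\si)/2)^2}{\Gamma((\al+2-\si)/2)^2}\,w_{\al,\si}^{\dl}$ and concludes.
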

\begin{proof}
By Proposition \ref{eq:LagseriesK} we have
\begin{equation}
\label{eq:weightspectral}
w_{\al,-\si}^{\dl}=\sum_{n=0}^{\infty}c_{n,\dl}^{1/2}(-\si)\varphi_n^{\al}.
\end{equation}
Then, in view of the spectral decomposition of $L_{\al,\si}$ \eqref{eq:spectralLsi}, the Laguerre decomposition of $w_{\al,-\si}$ in \eqref{eq:weightspectral}, \eqref{eq:convoLaguerres}, and Lemma \ref{lem:connecting} we have
$$
L_{\al,\si}w_{\al,-\si}^{\dl}=(4\dl)^{\si} \frac{\Gamma\big(\frac{\al+2+\si}{2}\big)^2}{\Gamma\big(\frac{\al+2-\si}{2}\big)^2} \sum_{n=0}^{\infty}c_{n,\dl}^{1/2}(\si)\varphi_n^{\al}=(4\dl)^{\si} \frac{\Gamma\big(\frac{\al+2+\si}{2}\big)^2}{\Gamma\big(\frac{\al+2-\si}{2}\big)^2} w_{\al,\si}^{\dl}.
$$
With this, we  easily conclude the proof of the theorem.
\end{proof}
Let $G_{\al,\si}$ be the function defined by
\begin{equation}
\label{eq:Gfundamental}
G_{\al,\si}:= \frac{2}{\Gamma(\al+1)}\sum_{n=0}^{\infty}S_n^{\alpha,\si}\varphi_n^{\al},
\end{equation}
where $S_n^{\alpha,\si}$ is the constant defined in \eqref{eq:Salpha}.
A direct computation, using \eqref{eq:convoLaguerres} and Lemma \ref{lem:connecting}, yields the following.
\begin{prop}
\label{prop:CH}
Let $0<\si<1$. Then,
$$
w_{\al,-\si}^{\dl}\ast_{\al}G_{\al,-\si}=4^{\si} \frac{\Gamma\big(\frac{\al+2+\si}{2}\big)^2}{\Gamma\big(\frac{\al+2-\si}{2}\big)^2} w_{\al,\si}^{\dl}.
$$
\end{prop}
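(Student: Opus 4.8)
The plan is to reduce the claimed identity to a coefficientwise statement in the Laguerre basis, where the Laguerre convolution acts diagonally. First I would expand both factors of the convolution: applying Proposition \ref{eq:LagseriesK} with $-\si$ in place of $\si$ gives $w_{\al,-\si}^{\dl}=\sum_{n\ge0}c_{n,\dl}^{1/2}(-\si)\varphi_n^{\al}$, while the definition \eqref{eq:Gfundamental} gives $G_{\al,-\si}=\frac{2}{\Gamma(\al+1)}\sum_{j\ge0}S_j^{\al,-\si}\varphi_j^{\al}$. Substituting both series into $w_{\al,-\si}^{\dl}\ast_{\al}G_{\al,-\si}$ and interchanging the summations with the integral defining $\ast_{\al}$ (a step to be justified) reduces matters to the elementary convolutions $\varphi_n^{\al}\ast_{\al}\varphi_j^{\al}$.

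Second, I would collapse the resulting double series by means of the orthogonality relation \eqref{eq:convoLaguerres}, which asserts that $\frac{2}{\Gamma(\al+1)}\varphi_n^{\al}\ast_{\al}\varphi_j^{\al}=\delta_{nj}\varphi_n^{\al}$. Every off-diagonal term vanishes and the normalizations $\frac{2}{\Gamma(\al+1)}$ cancel, leaving the single series
\[
w_{\al,-\si}^{\dl}\ast_{\al}G_{\al,-\si}=\sum_{n\ge0}c_{n,\dl}^{1/2}(-\si)\,S_n^{\al,-\si}\,\varphi_n^{\al}.
\]

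Third, I would use Lemma \ref{lem:connecting} to re-express the coefficient $c_{n,\dl}^{1/2}(-\si)$ in terms of $c_{n,\dl}^{1/2}(\si)$. Once this substitution is made, the $n$-dependent Gamma quotients hidden in $S_n^{\al,-\si}$ (defined in \eqref{eq:Salpha}) and in the connecting relation are meant to combine into a single factor independent of $n$, multiplying $c_{n,\dl}^{1/2}(\si)$. The surviving series is then $(\text{constant})\sum_{n\ge0}c_{n,\dl}^{1/2}(\si)\varphi_n^{\al}$, which another application of Proposition \ref{eq:LagseriesK} identifies as a multiple of $w_{\al,\si}^{\dl}$; reading off the scalar produces $4^{\si}\Gamma\big(\frac{\al+2+\si}{2}\big)^2/\Gamma\big(\frac{\al+2-\si}{2}\big)^2$.

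The argument is essentially mechanical, so the two points demanding genuine care are the following. First, the legitimacy of convolving the two Laguerre expansions term by term must be justified; this should follow from the $L^2((0,\infty),d\mu_\al)$ convergence of the expansions together with the orthogonality and boundedness of $\ast_\al$ already exploited in the earlier lemmas. Second — and this is where I expect the main obstacle to lie — one must keep exact track of the powers of $2$ and $\dl$ and of the Gamma quotients entering through \eqref{eq:Salpha} and Lemma \ref{lem:connecting}, in order to confirm that the product $c_{n,\dl}^{1/2}(-\si)\,S_n^{\al,-\si}$ is indeed an $n$-independent multiple of $c_{n,\dl}^{1/2}(\si)$ and that this multiple is precisely the asserted constant.
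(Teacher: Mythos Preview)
Your approach is exactly the paper's: the authors' proof is the single sentence ``A direct computation, using \eqref{eq:convoLaguerres} and Lemma~\ref{lem:connecting}, yields the following,'' which is precisely the expansion--orthogonality--coefficient-identification scheme you describe. Your caution about tracking the Gamma quotients and powers of $2$ and $\dl$ through \eqref{eq:Salpha} and Lemma~\ref{lem:connecting} is well placed, since that bookkeeping is the only nontrivial content here.
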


We finish this subsection by showing that the function $G_{\al,\si}$ given in \eqref{eq:Gfundamental} is indeed a fundamental solution for the operator $L_{\al,\si}$, and we obtain a explicit expression for it.

\begin{thm}
\label{thm:fundamental}

The function $H_{\al,\si}=\frac{\sqrt{2}\Gamma(\al+1/2)}{4^\si \Gamma(\alpha+1)}G_{\al,\si}$ verifies that $L_{\al,\si}H_{\al,\si}=\delta_0$ where $\delta_0$ is the Dirac delta distribution with support at $0$. Moreover,
$$
G_{\al,\si}(r)=\frac{2^{\si+\al}\Gamma\big(\frac{\al-\si}{2}\big)}{\sqrt{\pi}\Gamma(\si)}
r^{-(\al+1-\si)}K_{(\al+1-\si)/2}(r^2/2).
$$
\end{thm}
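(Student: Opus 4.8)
The theorem has two parts: (i) a suitably normalized multiple of $G_{\al,\si}$ is a fundamental solution for $L_{\al,\si}$, and (ii) $G_{\al,\si}$ has the explicit Macdonald-function form stated. I would attack (i) and (ii) more or less in parallel, since the explicit formula is what will make the fundamental-solution claim transparent. For (i), the plan is to exploit the spectral definition \eqref{eq:Gfundamental} of $G_{\al,\si}$ together with the reproducing identity \eqref{eq:convoLaguerres}. Recall that $L_{\al,\si}f = \frac{2}{\Gamma(\al+1)}\sum_n 4^{\si}S_n^{\al,\si}\, f\ast_{\al}\varphi_n^{\al}$, while $G_{\al,\si}=\frac{2}{\Gamma(\al+1)}\sum_n S_n^{\al,\si}\varphi_n^{\al}$. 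I would show that applying $L_{\al,\si}$ to $H_{\al,\si}$ (a constant multiple of $G_{\al,\si}$) and using \eqref{eq:convoLaguerres} to collapse the double sum $\sum_{n,j}$ forces the spectral multipliers $4^{\si}S_n^{\al,\si}$ to cancel against the coefficients $S_n^{\al,\si}$ of $G$, leaving $\frac{2}{\Gamma(\al+1)}\sum_n \varphi_n^{\al}$ up to the normalizing constant. This last series is, formally, the Laguerre reproducing kernel evaluated at the origin, and pinning down its distributional meaning as $\delta_0$ (against $d\mu_\al$) is where the normalization $\frac{\sqrt 2\,\Gamma(\al+1/2)}{4^\si\Gamma(\al+1)}$ gets fixed.

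For part (ii), the strategy is to identify $G_{\al,\si}$ through its already-known Laguerre coefficients. By definition the coefficients of $G_{\al,\si}$ are proportional to $S_n^{\al,\si}$, and by Lemma \ref{lem:connecting} these same constants $S_n^{\al,-\si}$ (up to explicit Gamma factors and a power of $\dl$) are exactly the coefficients $c_{n,\dl}^{1/2}(-\si)$ appearing in the expansion of the Macdonald weight $w_{\al,-\si}^{\dl}$ from Proposition \ref{eq:LagseriesK}. Thus I would match the series \eqref{eq:Gfundamental} against the $\dl\to 0$ limit of the expansion in Proposition \ref{eq:LagseriesK}: sending $\dl\to 0$ in $w_{\al,-\si}^{\dl}(r)=c_{\al,-\si}(\dl+r^2)^{-(\al+1-\si)/2}K_{(\al+1-\si)/2}\big((\dl+r^2)/2\big)$ produces, after stripping the explicit $\dl$-dependent normalization coming from Lemma \ref{lem:connecting}, precisely $r^{-(\al+1-\si)}K_{(\al+1-\si)/2}(r^2/2)$, which is the claimed expression. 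Keeping careful track of the constants $c_{\al,-\si}$ from \eqref{eq:calsi}, the factor $\dl^{\si}\Gamma(\tfrac{\al+2+\si}{2})^2/\Gamma(\tfrac{\al+2-\si}{2})^2$ from Lemma \ref{lem:connecting}, and the duplication formula for the Gamma function should reproduce the prefactor $\frac{2^{\si+\al}\Gamma((\al-\si)/2)}{\sqrt{\pi}\,\Gamma(\si)}$.

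To make the $\dl\to 0$ matching rigorous rather than formal, I would note that the two sides already agree coefficient-by-coefficient in the orthonormal Laguerre basis for every $\dl>0$ via Lemma \ref{lem:connecting}, so it suffices to evaluate the explicit closed form of the weight in the limit and verify the constants; alternatively, one can read off $G_{\al,\si}$ directly by applying Lemma \ref{lem:connecting} at a fixed $\dl$ and using homogeneity of $L(a,b,c)$ in its first argument. Either route reduces the problem to the elementary asymptotics $K_{\nu}(z)\sim \tfrac12\Gamma(\nu)(z/2)^{-\nu}$ as $z\to 0^+$, together with bookkeeping of Gamma factors.

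\textbf{Main obstacle.} The genuinely delicate step is part (i): justifying that $\frac{2}{\Gamma(\al+1)}\sum_n \varphi_n^{\al}$ acts as the Dirac delta and, correspondingly, that the formal manipulation $L_{\al,\si}G_{\al,\si}=\text{(reproducing kernel)}$ is valid in the sense of distributions rather than merely as a divergent series. This requires interpreting \eqref{eq:convoLaguerres} and the resulting series against test functions, controlling the interchange of $L_{\al,\si}$ with the infinite sum, and extracting the correct normalizing constant $\frac{\sqrt 2\,\Gamma(\al+1/2)}{\Gamma(\al+1)}$ from the behavior of the Laguerre kernel near $r=0$ with respect to the measure $d\mu_\al(r)=r^{2\al+1}\,dr$. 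The explicit formula derived in (ii) is the natural tool to close this gap, since once $G_{\al,\si}$ is known in closed form its singularity $\sim r^{-(\al+1-\si)}$ as $r\to0$ can be compared against the fundamental solution of the model problem, confirming both the delta behavior and the constant.
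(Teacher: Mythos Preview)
Your route for part (ii) is genuinely different from the paper's. The paper does not go through Proposition~\ref{eq:LagseriesK} or Lemma~\ref{lem:connecting} at all; instead it uses the integral identity \eqref{eq:GR0} with $\nu=\si-1$ to rewrite the spectral series \eqref{eq:Gfundamental} as
\[
G_{\al,\si}(r)=\frac{2^\si}{\Gamma(\si)}\int_0^\infty q_{t,\al}(r)\,(\sinh 2t)^{\si-1}\,dt,
\]
substitutes the explicit heat kernel \eqref{eq:heatkernelLag}, changes variables via $\coth 2t=z+1$, and evaluates the remaining integral with a tabulated formula to produce the Macdonald function directly. Your coefficient-matching idea can be made to work, but not via Lemma~\ref{lem:connecting}: that lemma only gives a \emph{relation} between $c_{n,\dl}^{1/2}(\si)$ and $c_{n,\dl}^{1/2}(-\si)$, not the value of either, and in particular the claim that the two sides ``agree coefficient-by-coefficient for every $\dl>0$'' is false, since $c_{n,\dl}^{1/2}(-\si)$ depends on $\dl$ in an $n$-dependent way, not through a common scalar. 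What actually works is to compute $\lim_{\dl\to 0}c_{n,\dl}^{1/2}(-\si)$ straight from \eqref{eq:coefficient}, where $L(0,b,c)=\int_0^\infty x^{b-1}(1+x)^{-c}\,dx=B(b,c-b)$ yields a constant times the right ratio of Gammas; then $w_{\al,-\si}^0$ matches the series and the prefactor bookkeeping you describe goes through. (Your sentence also slips between $S_n^{\al,\si}$ and $S_n^{\al,-\si}$ without explanation; this sign needs to be tracked carefully.)

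For part (i) you have the right mechanism but you have misjudged where the work is. In the paper this half is dispatched in three lines and uses no singularity analysis: one observes that the Laguerre translation at the origin is a scalar, $\mathcal{T}_0^\al f(s)=\tfrac{\Gamma(\al+1/2)}{\sqrt 2\,\Gamma(\al+1)}f(s)$, so $f\ast_\al\varphi_j^\al(0)$ is an explicit constant multiple of the $j$-th Laguerre coefficient of $f$. Pairing $\langle H_{\al,\si},L_{\al,\si}f\rangle$ through the spectral definitions then collapses the multipliers and gives $\frac{2}{\Gamma(\al+1)}\sum_j f\ast_\al\varphi_j^\al(0)=f(0)$, with the normalizing constant $\frac{\sqrt 2\,\Gamma(\al+1/2)}{4^\si\Gamma(\al+1)}$ falling out of this identity rather than from any asymptotic analysis of $G_{\al,\si}$ near $r=0$. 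Your proposed detour through the explicit formula from (ii) to extract the constant is unnecessary.
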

\begin{proof}
Since
\[
\mathcal{T}_0^\alpha f(s)=\frac{\Gamma(\alpha+1/2)}{\sqrt{2}\Gamma(\alpha+1)}f(s),
\]
we have
\[
f\ast_\al \varphi_j^\alpha(0)=\frac{\Gamma(\alpha+1/2)}{\sqrt{2}\Gamma(\alpha+1)}\int_0^\infty f(s)\varphi_j^\alpha(s)s^{2\alpha+1}\, dx
\]
and
\[
\langle \varphi_k^\alpha,f\ast_\al \varphi_j^\alpha \rangle_{L^2(\mathbb{R}^+,d\mu_\alpha)}=\frac{(\Gamma(\alpha+1))^2}{\sqrt{2}\Gamma(\alpha+1/2)}\delta_{jk}f\ast_\al \varphi_j^\alpha(0).
\]
Then the first statement of the theorem follows from the spectral decomposition of $L_{\al,\si}$ \eqref{eq:spectralLsi} together with the definition of $G_{\al,\si}$ in \eqref{eq:Gfundamental}. Indeed,
\[
\langle L_{\al,\si}H_{\al,\si}, f  \rangle_{L^2(\mathbb{R}^+,d\mu_\alpha)}=
\langle H_{\al,\si}, L_{\al,\si}f  \rangle_{L^2(\mathbb{R}^+,d\mu_\alpha)}=\frac{2}{\Gamma(\alpha+1)}\sum_{j=0}^\infty f\ast_\al \varphi_j^\alpha(0)=f(0).
\]

For the second one, we make use of the formula \eqref{eq:GR0} with $\mu=(4n+2\al+2)$, $\nu=\si-1$ and $\beta=2$, so we have
$$
G_{\al,\si}(r)=\frac{2^\si}{\Gamma(\si)}\int_0^{\infty}q_{t,\al}(r)(\sinh 2t)^{\si-1}\,dt,
$$
where $q_{t,\al}$ is the heat kernel given in \eqref{eq:heatkernelLag}.
Thus
$$
G_{\al,\si}(r)=\frac{4^\si}{\Gamma(\si)}\int_0^{\infty}(\sinh 2t)^{-\al+\si-2}e^{-(\coth 2t)r^2/2}\,dt=:\frac{4^\si}{\Gamma(\si)}I(r).
$$
Let us compute the integral above. The change of variable $\coth 2t=z+1$ yields
$$
I(r)=2^{-1}e^{-r^2/2}\int_0^{\infty}e^{-zr^2/2}z^{(\al-\si)/2}(z+2)^{(\al-\si)/2}\,dz.
$$
Now we use the identity (see \cite[2.3.6.10, p. 324]{Prudnikov1})
$$
\int_0^{\infty}e^{-px}x^{\mu-1}(x+z)^{\mu-1}\,dx=\frac{\Gamma(\mu)}{\sqrt{\pi}}(p/z)^{1/2-\mu}e^{pz/2}K_{\mu-1/2}(pz/2),
$$
valid for $\Re\mu, \Re p>0; |\arg z|<\pi$, with $\mu=(\al-\si)/2+1$, $p=r^2/2$ and $z=2$, so that
$$
I(r)=2^{-1}\frac{\Gamma((\al-\si)/2)2^{\al+1-\si}}{\sqrt{\pi}}r^{-(\al+1-\si)}
K_{(\al+1-\si)/2}(r^2/2)
$$
and we get the conclusion.
\end{proof}

\subsection{The ground state representation for $L_{\al,\si}$ and proof of Theorem \ref{thm:HardyLaguerre}}
\label{subsec:CH2}

We proceed with the next step, namely, to get a ground state representation for the operator $L_{\al,\si}$. Let us set
$$
H_{\al,\si}^{\dl}[f]:=\langle L_{\al,\si}f,f\rangle-A_{\al,\si}^{\dl}
\int_{0}^{\infty}|f(r)|^2\frac{w_{\al,\si}^{\dl}(r)}{w_{\al,-\si}^{\dl}(r)}\,d\mu_{\al}(r),
$$
where
\begin{equation}
\label{eq:Aconstant}
A_{\al,\si}^{\dl}:=(4\dl)^{\si} \frac{\Gamma\big(\frac{\al+2+\si}{2}\big)^2}{\Gamma\big(\frac{\al+2-\si}{2}\big)^2}
\end{equation}
(observe that $A_{\al,\si}^{\dl}=\frac{4^{\si}}{\dl^{\si}}(B^{\dl}_{\al,\si})^2$, with $B_{\al,\si}^{\dl}$ defined in \eqref{eq:Bconstant}).

\begin{thm}
\label{thm:groundstateLag}
Let $0<\si<1$. If $f\in C_0^{\infty}(0,\infty)$ and $g(r)=f(r)(w_{\al,-\si}^{\dl}(r))^{-1}$ then
$$
H_{\al,\si}^{\dl}[f]
=\frac{2^{\si-1}}{|\Gamma(-\si)|}\int_{0}^{\infty}\int_{0}^{\infty}|g(r)-g(s)|^2
\mathcal{K}_{\al,\si}(r,s)w_{\al,-\si}^{\dl}(r)w_{\al,-\si}^{\dl}(s)
\,d\mu_{\al}(r)\,d\mu_{\al}(s).
$$
\end{thm}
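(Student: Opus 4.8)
The plan is to read this statement as the concrete realization, in the Laguerre setting, of the abstract ground state identity of Theorem~\ref{thm:groundstate}. Under the dictionary $\Lambda_\si = L_{\al,\si}$, kernel $K_\si = \mathcal{K}_{\al,\si}$, constant $a_\si = 2^\si/|\Gamma(-\si)|$ (so that $a_\si/2 = 2^{\si-1}/|\Gamma(-\si)|$, which matches the stated prefactor), and weight $w_\si = w_{\al,-\si}^{\dl}$, the only ingredient special to this case is the explicit action of $L_{\al,\si}$ on the weight: Theorem~\ref{prop:Lsweight} gives $\widetilde{w}_\si := L_{\al,\si}w_{\al,-\si}^{\dl} = A_{\al,\si}^{\dl}\,w_{\al,\si}^{\dl}$ with $A_{\al,\si}^{\dl}$ as in \eqref{eq:Aconstant}, so that $\widetilde{w}_\si/w_\si = A_{\al,\si}^{\dl}\,w_{\al,\si}^{\dl}/w_{\al,-\si}^{\dl}$. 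This is exactly the ratio in the definition of $H_{\al,\si}^{\dl}[f]$, so the asserted identity is Theorem~\ref{thm:groundstate} with these substitutions.

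To carry this out concretely, I would start from the bilinear identity (Proposition~\ref{prop:integRep}) and polarize it exactly as in \eqref{eq:polarization}, obtaining for admissible $F,G$
\[
\langle L_{\al,\si}F,G\rangle = \frac{2^{\si-1}}{|\Gamma(-\si)|}\int_0^\infty\!\!\int_0^\infty\big(F(r)-F(s)\big)\overline{\big(G(r)-G(s)\big)}\,\mathcal{K}_{\al,\si}(r,s)\,d\mu_\al(r)\,d\mu_\al(s) + \int_0^\infty B_\si(r)\,F(r)\overline{G(r)}\,d\mu_\al(r),
\]
where $B_\si$ is the zeroth-order term of the representation in Proposition~\ref{prop:integralLaguerre}. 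Then I would insert $G = w_{\al,-\si}^{\dl}$ and $F = |f|^2 (w_{\al,-\si}^{\dl})^{-1}$, which is again in $C_0^\infty(0,\infty)$ since $w_{\al,-\si}^{\dl}$ is smooth and strictly positive. The elementary identity
\[
\Big(\frac{|f(r)|^2}{w_{\al,-\si}^{\dl}(r)}-\frac{|f(s)|^2}{w_{\al,-\si}^{\dl}(s)}\Big)\big(w_{\al,-\si}^{\dl}(r)-w_{\al,-\si}^{\dl}(s)\big) = |f(r)-f(s)|^2 - |g(r)-g(s)|^2\,w_{\al,-\si}^{\dl}(r)\,w_{\al,-\si}^{\dl}(s),
\]
with $g = f/w_{\al,-\si}^{\dl}$, splits the double integral into the $|f(r)-f(s)|^2$ piece (which, together with its boundary term, rebuilds $\langle L_{\al,\si}f,f\rangle$ via Proposition~\ref{prop:integRep}) and the desired $|g(r)-g(s)|^2$ piece. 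The left side $\langle L_{\al,\si}F, w_{\al,-\si}^{\dl}\rangle$ is evaluated directly by Theorem~\ref{prop:Lsweight} as $A_{\al,\si}^{\dl}\int_0^\infty |f|^2\,w_{\al,\si}^{\dl}/w_{\al,-\si}^{\dl}\,d\mu_\al$, and the boundary contribution $\int B_\si F\,\overline{G} = \int B_\si|f|^2$ coincides with the boundary term in the bilinear form for $\langle L_{\al,\si}f,f\rangle$, so the two cancel on rearrangement. Collecting terms gives precisely the claimed expression for $H_{\al,\si}^{\dl}[f]$.

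The main obstacle is analytic rather than algebraic: the weight $G = w_{\al,-\si}^{\dl}$ is \emph{not} compactly supported, so the polarization \eqref{eq:polarization} and the pairing in Theorem~\ref{prop:Lsweight} — both established for test functions — must be justified in its presence. I would handle this by checking that $w_{\al,-\si}^{\dl}$ lies in the admissible class $W_A^\si$: from \eqref{eq:weight} and the asymptotics $K_\nu(z)\sim\sqrt{\pi/(2z)}\,e^{-z}$ as $z\to\infty$, the weight is bounded, strictly positive on compact sets, and exponentially decaying, so $g = f/w_{\al,-\si}^{\dl}$ is smooth and compactly supported. One then verifies absolute convergence of all the double integrals weighted by $\mathcal{K}_{\al,\si}(r,s)\,w_{\al,-\si}^{\dl}(r)\,w_{\al,-\si}^{\dl}(s)$ using the Gaussian bounds on $\mathcal{T}_r^\al q_{t,\al}$ and the integrability of $\mathcal{K}_{\al,\si}$ already obtained in the proof of Proposition~\ref{prop:integralLaguerre}, which legitimizes the applications of Fubini and the cancellation of boundary terms. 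Apart from this justification, the argument is a direct transcription of the proof of Theorem~\ref{thm:groundstate}.
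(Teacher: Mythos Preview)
Your proposal is correct and follows essentially the same route as the paper: polarize the bilinear identity of Proposition~\ref{prop:integRep}, take $G=w_{\al,-\si}^{\dl}$ and $F=|f|^2/w_{\al,-\si}^{\dl}$, and use Theorem~\ref{prop:Lsweight} to evaluate $\langle L_{\al,\si}F,G\rangle$, exactly as in the abstract Theorem~\ref{thm:groundstate}. Your discussion of the analytic justification for the non-compactly-supported weight is a welcome addition that the paper leaves implicit.
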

\begin{proof}
We proceed as in Theorem \ref{thm:groundstate}, by polarizing the integral representation in Proposition \ref{prop:integRep} and taking $G(x)=w_{\al,-\si}^{\dl}(r)$ and $F(x)=\frac{|f(x)|^2}{w_{\al,-\si}^{\dl}(r)}$. Moreover, Theorem \ref{prop:Lsweight} and \eqref{eq:weight} allow us to write
$$
\langle L_{\al,\si}F, G\rangle =(4\dl)^{\si} \frac{\Gamma\big(\frac{\al+2+\si}{2}\big)^2}{\Gamma\big(\frac{\al+2-\si}{2}\big)^2} \int_{0}^{\infty}|f(r)|^2\frac{w_{\al,\si}^{\dl}(r)}{w_{\al,-\si}^{\dl}(r)}\,d\mu_{\al}(r).
$$
The conclusion follows as in Theorem \ref{thm:groundstate}.
\end{proof}

Observe that if we take $f=w_{\al,-\si}^{\dl}(r)$ in Theorem \ref{thm:HardyLaguerre}, both sides of the inequality reduce to
$$
(4\dl)^{\si} \frac{\Gamma\big(\frac{\al+2+\si}{2}\big)^2}{\Gamma\big(\frac{2\al+2-\si}{2}\big)^2} \int_{0}^{\infty}w_{\al,-\si}^{\dl}(r)w_{\al,\si}^{\dl}(r)\,d\mu_{\al}(r),
$$
so in that case the constant $(4\dl)^{\si} \frac{\Gamma\big(\frac{\al+2+\si}{2}\big)^2}{\Gamma\big(\frac{\al+3-\si}{2}\big)^2}$
is optimal in our inequality.

\begin{proof}[Proof of Theorem \ref{thm:HardyLaguerre}]
From the ground state representation $H_{\al,\si}^{\delta}[f]$, it is clear that
$$
\langle L_{\al,\si}f,f\rangle\ge (4\dl)^{\si}\frac{\Gamma\big(\frac{\al+2+\si}{2}\big)^2}{\Gamma\big(\frac{\al+2-\si}{2}\big)^2} \int_{0}^{\infty}|f(r)|^2\frac{w_{\al,\si}^{\dl}(r)}{w_{\al,-\si}^{\dl}(r)}\,d\mu_{\al}(r).
$$
It is known that $K_{\nu}(x)$ is an increasing function of $\nu$ for $x>0$ (see \cite[p. 226]{NU}). With this, and recalling the definition of $w_{\al,\si}^{\dl}$ in \eqref{eq:weight}, we have
$$
\langle L_{\al,\si}f,f\rangle\ge (4\dl)^{\si}\frac{\Gamma\big(\frac{\al+2+\si}{2}\big)^2}{\Gamma\big(\frac{\al+2-\si}{2}\big)^2} \int_{0}^{\infty}|f(r)|^2\frac{w_{\al,\si}^{\dl}(r)}{w_{\al,-\si}^{\dl}(r)}\,d\mu_{\al}(r)\ge \dl^{\si}\frac{\Gamma\big(\frac{\al+2+\si}{2}\big)}{\Gamma\big(\frac{\al+2-\si}{2}\big)} \int_{0}^{\infty}\frac{|f(r)|^2}{(\dl+r^2)^{\si}}\,d\mu_{\al}(r),
$$
which is the required inequality.
\end{proof}
\section{A Hardy inequality for the fractional Dunkl--Hermite operator}
\label{sec:hardyDH}

\subsection{The general Dunkl setting}
\label{subsec:generalDunkl}

We will introduce some basic facts concerning the general Dunkl setting. A complete picture of the Dunkl's theory can be found in \cite{DU, DUX, RO}. We also refer the reader to the survey article \cite{RO3}.

Let us use
$\La \cdot , \cdot \Ra $ for the standard
 inner product on $\R^d $.
 For $\nu\in \R^d\setminus\{0\}$, we denote by $\si_{\nu}$ the orthogonal reflection in the hyperplane perpendicular to $\nu$, namely,
 $$
\si_{\nu}(x)= x - 2 \; \frac{\La \nu, x \Ra}{|\nu |^2} \nu .
$$
We say that a finite subset $R\subset  \R^d\setminus\{0\}$ is a reduced root system if, for all $\nu\in R$, then $\si_{\nu}(R)=R$ and $\R\nu\cap R=\{\pm\nu\}$. Each root system can be written as a disjoint union $R=R_+\cup (-R_+)$,
where $R_+$ and $-R_+$ are separated by a hyperplane through the origin. Such $R_+$ is called the set of all positive roots in $R$.
 The group $G$ generated by the reflections
 $\{\si_{\nu}:\nu\in R \}$ is called the reflection group or Coxeter group
 associated with $R$. A function
 \begin{equation}
 \label{eq:k}
 \K:R\to [0,\infty)
 \end{equation}
 which is invariant under the action of $G$ on the root system $R$ is called a
 multiplicity function.
  Let $T_j$, $j=1,2, \ldots d$, be the difference
 -differential operators  defined by
 $$ T_jf(x) = \frac{\pa f}{\pa x_j}(x) +\sum_{\nu \in R_+}\K(\nu)\nu _j
 \frac{f(x)-f(\si_{\nu}x)}{\La \nu, x \Ra}.
 $$
  These operators, known as Dunkl
 operators, form a family of commuting operators. The Dunkl
 Laplacian $\Dk $ is then defined to be the operator
 \begin{equation}
 \label{eq:DunklLap}
  \Dk = \sum _{j=1}^d T_j^2
  \end{equation}
 which can be explicitly calculated, see
 \cite[Theorem 4.4.9]{DUX}. It is known that the operators $T_j$ have a joint eigenfunction
$E_{\K}(x,y)$ satisfying
 $$
 T_j E_{\K}(x,y)=y_j E_{\K}(x,y), \qquad  j=1,\ldots,d.
 $$
 The function $(x,y)\mapsto E_{\K}(x,y)$ is called the \textit{Dunkl kernel} or the \textit{generalized exponential
 kernel} on $\R^d\times \R^d$, which is the generalization of the exponential function $e^{\langle x,y\rangle}$.
 Associated with the root system $R$ and the multiplicity function $\K $, the weight function $h_{\K}(x)$
 is defined by
 $$
 h_{\K}(x):=\prod_{\nu \in R_+}|\La x, \nu \Ra |^{2\K(\nu)}.
 $$
The nonnegative real number
\begin{equation}
\label{gamma}
\gm = \sum_{\nu \in R_+}\K(\nu)
\end{equation}
defined in terms of the multiplicity
 function $\K(\nu)$ plays an important role in Dunkl theory. Note that $h_{\K}(x)$ is
 homogeneous of degree $2\gm $.
 For a radial function $f$ in $L^1(\R^d, h_{\K})$, there exists a function $F$ on $[0,\infty)$ such that $f(x)=F(|x|)$, for all $x\in \R^d$.
In view of  the homogeneity of $h_{\K}(x)$, it follows that the function $F$ is integrable with respect to the measure $r^{d+2\gm-1}\, dr$ on $[0,\infty)$ and we have
 \begin{equation*}
 \begin{split}
 \int_{\R^d}f(x)h_{\K}(x)\,dx&=\int_0^{\infty}\Big(\int_{\sd}f(rx') h_{\K}(rx')\,d\sigma(x')\Big)r^{d-1}\,dr\\
& =\int_0^{\infty}\Big(\int_{\sd}h_{\K}(rx')\,d\sigma(x')\Big)F( r)r^{d-1}\,dr\\
&=d_{\K}\int_0^{\infty}F(r)r^{d+2\gm-1}\,dr,
 \end{split}
 \end{equation*}
 where
 $$
d_{\K}:=\int_{\sd}h_{\K}(x')\,d\sigma(x')=\frac{c_{\K}^{-1}}{2^{\frac{d}{2}+\gm-1}\Gamma\big(\frac{n}{2}+\gm\big)},
 $$
and $c_{\K}$ is the Mehta-type constant
 $$
 c_{\K}:=\Big(\int_{\R^d}e^{-\frac{|x|^2}{2}}h_{\K}(x)\,dx\Big).
 $$

\subsection{$h$-harmonic expansions}
\label{subsec:hharmonic}

The theory of spherical harmonics which deals with expansions of functions in $L^2(\sd,d\si) $ in terms of spherical harmonics has an analogue for the space $ L^2(\sd, h_\K  d\si)$.  Given a function $f \in L^2(\sd,h_\K d\si) $ we can expand it in terms of the so called \textit{spherical $h$-harmonics} (or just \textit{$h$-harmonics}). We refer the reader to \cite[Chapter 5]{DUX} concerning $h$-harmonics. These are the restrictions of solid $h$-harmonics to $\sd$ where by solid $h$-harmonics we mean
homogeneous polynomials $P(x) $
 satisfying $\Dk P(x)=0 $.  The $h$-harmonics are analogues of spherical harmonics and defined using $\Dk $
 in place of $\D$. Let $\Hc_{m}^d $ be the space of all $h$-harmonics of degree $m
 $. Then the space $L^2\big(\sn,h_{\K}(x')\, d\sigma(x')\big) $ is the orthogonal
 direct sum of the finite dimensional spaces $\Hc_{m}^d $ over $m=0,1,2,\ldots$. Thus there is an orthonormal basis
 $\{Y_{m,j}^h:  j= 1,2,\ldots,\,d(m), m = 0,1,2,\ldots  \}$,
 where
 \begin{equation*}
 d(m)=\operatorname{dim} (\Hc_{m}^d),
 \end{equation*}
 for $L^2\big(\sd,h_{\K}(x')\, d\sigma(x')\big)$
 so that for each $ m, \{Y_{m,j}^h: j=1,2,\ldots,d(m)
 \}$ is an orthonormal basis of $h$-harmonics  for $\Hc_{m}^d $. For $x\in \R^d$, take $x=rx'$, $r\in \R^+$ and $x'\in \sd$. The $h$-harmonic expansion of a function $f$ on $ \R^d $ is given by
 \begin{equation}
 \label{eq:hexpansion}
   f(rx')=\sum_{m=0}^\infty\sum_{j=1}^{d(m)}f_{m,j}(r) Y_{m,j}^h(x')=:\sum_{m,j}f_{m,j}(r)Y_{m,j}^h(x'),
\end{equation}
 where the $h$-harmonic coefficients are
 \begin{equation*}
    f_{m,j}(r)=\int_{\sd}f(rx')Y_{m,j}^h(y')h_{\K}(y')d\si(y').
\end{equation*}

In the Dunkl setting we have a \textit{Funk--Hecke formula for $h$-harmonics}. The classical Funk--Hecke formula for spherical harmonics states
 the following. For any continuous function $f $ on $[-1, 1] $ and a (standard)
 spherical harmonic $Y_{m,j}$ of degree $m $ we have
 $$
 \int_{\sd} f(\La x', y'\Ra )Y_{m,j}(y')d\si (y')= \Lambda_m (f)Y_{m,j}(x')
 $$
 where $\Lambda_m (f) $ is a constant defined by
 $$
 \Lambda_m (f)= \frac{\om_d\Gm(d/2)}{\sqrt{\pi}\Gm((d-1)/2)}\int_{-1}^1
 f(u)P_m^{\frac{d}{2}-1}(u)(1-u^2)^{\frac{d-3}{2}}du .
 $$
 Here $P_m^{\lambda} $ stands for the normalized ultraspherical polynomials of type
 $\lambda>-\tfrac12$ and degree $m$, and  $\om_d:=\int_{\sd}d\si(\om)$. To state the Funk--Hecke formula for $h$-harmonics, we need to recall the
 intertwining operator in the Dunkl setting. It is known that there is an
 operator $V_{\K}$ satisfying $T_jV_{\K}=V_{\K} \frac{\pa}{\pa x_j} $. However, the explicit
 form of $V_{\K}$ is not known, except in a couple of simple cases. In particular, the Dunkl kernel
 is given by $E_{\K}(x,y) = V_{\K}e^{\La \cdot ,y \Ra}(x) $. For $d\ge1$ and $\gm$ as in \eqref{gamma}, let us recall the definition of the constant $\la$ as
\begin{equation}
\label{eq:lambda}
\lambda:=\frac{d}{2}+\gm-1.
\end{equation}
 The Funk--Hecke formula
 for $h$-harmonics is as follows (see \cite[Theorem 7.2.7]{DX} or \cite[Theorem 5.3.4]{DUX}).
 \begin{thm}
 \label{Th:FunkHeckeh}
  Let $f$ be a continuous function defined on $[-1, 1]$ and let
  $\la$ be as in \eqref{eq:lambda}. Then for every $Y_{m,j}^h \in \Hc_{m}^d$,
  $$
  \int_{\sd}V_{\K}f(\La x',\cdot \Ra)(y')Y_{m,j}^h(y')h_{\K}(y')d\si(y')=\Lambda_m^{\K}(f)Y_{m,j}^h(x')
  $$
  where $\Lambda_m^{\K}(f) $ is  defined by
  $$
  \Lambda_m^{\K}(f)= \frac{\om_d^{\K}\Gm(\la+1)}{\sqrt{\pi}\Gm(\la+1/2)}
  \int_{-1}^1f(u)P_m^{\la}(u)(1-u^2)^{\la-\frac{1}{2}}\,du
  $$
  with
  \begin{equation*}
  \om_d^{\K}:=\int_{\sd}h_{\K}(\om)d\si(\om).
  \end{equation*}
 \end{thm}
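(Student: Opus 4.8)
The plan is to prove the identity first for polynomial $f$, reduce via the ultraspherical basis to a single degree, and then invoke the zonal (reproducing) kernel of $\Hc_m^d$ expressed through the intertwining operator. Both sides of the asserted formula depend linearly and continuously on $f\in C[-1,1]$ in the supremum norm: on the left because $V_{\K}$ is positivity-preserving and does not increase the supremum norm (see \cite{DUX}) while $h_{\K}\,d\si$ is a finite measure on $\sd$, and on the right because $(1-u^2)^{\la-1/2}$ is integrable on $(-1,1)$ for $\la>-1/2$. By the Weierstrass approximation theorem it therefore suffices to treat polynomials, and since the ultraspherical polynomials $\{P_k^{\la}\}_{k\ge0}$ form a basis of $\R[u]$, by linearity it is enough to prove the formula for $f=P_k^{\la}$, $k\ge0$.

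The key input is the closed form of the reproducing kernel $Z_k^{\K}(x',y'):=\sum_{i=1}^{d(k)}Y_{k,i}^h(x')Y_{k,i}^h(y')$ of $\Hc_k^d$ in $L^2(\sd,h_{\K}\,d\si)$. The \emph{addition formula} for $h$-harmonics asserts that there is a scalar $b_{k,\la}$ with
\[
Z_k^{\K}(x',y')=b_{k,\la}\,V_{\K}\big[P_k^{\la}(\La x',\cdot\Ra)\big](y').
\]
That $y'\mapsto V_{\K}[P_k^{\la}(\La x',\cdot\Ra)](y')$ is an $h$-harmonic of degree $k$ follows from the intertwining relation $\Dk V_{\K}=V_{\K}\D$ together with the fact that the solid zonal polynomial $z\mapsto|z|^kP_k^{\la}(\La x',z/|z|\Ra)$ is an ordinary harmonic of degree $k$, and that $V_{\K}$ preserves the degree of homogeneity; identifying it, up to the scalar $b_{k,\la}$, with the reproducing kernel is the substance of the addition formula.

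With this in hand the computation closes. Fixing $m,j$ and taking $f=P_k^{\la}$,
\begin{multline*}
\int_{\sd}V_{\K}\big[P_k^{\la}(\La x',\cdot\Ra)\big](y')\,Y_{m,j}^h(y')\,h_{\K}(y')\,d\si(y')\\
=b_{k,\la}^{-1}\int_{\sd}Z_k^{\K}(x',y')\,Y_{m,j}^h(y')\,h_{\K}(y')\,d\si(y')
=b_{k,\la}^{-1}\,\delta_{km}\,Y_{m,j}^h(x'),
\end{multline*}
by the reproducing property of $Z_k^{\K}$ and the orthogonality $\Hc_k^d\perp\Hc_m^d$ for $k\neq m$. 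On the other hand $\Lambda_m^{\K}(P_k^{\la})=0$ for $k\neq m$ by the orthogonality of the ultraspherical polynomials against $(1-u^2)^{\la-1/2}$, so both sides agree when $k\neq m$. For $k=m$ it remains to check $b_{m,\la}^{-1}=\Lambda_m^{\K}(P_m^{\la})$; since
\[
\Lambda_m^{\K}(P_m^{\la})=\frac{\om_d^{\K}\Gm(\la+1)}{\sqrt{\pi}\,\Gm(\la+1/2)}\int_{-1}^1\big(P_m^{\la}(u)\big)^2(1-u^2)^{\la-1/2}\,du,
\]
this is the identification of the normalizing constant $b_{m,\la}$ with the squared $L^2$-norm of $P_m^{\la}$ against the ultraspherical weight, which follows from the standard Gegenbauer normalization and the beta integral $\int_{-1}^1(1-u^2)^{\la-1/2}\,du=\sqrt{\pi}\,\Gm(\la+1/2)/\Gm(\la+1)$.

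The main obstacle is precisely the closed form of the zonal kernel, i.e. the addition formula expressing $Z_k^{\K}$ through $V_{\K}$ applied to the ultraspherical polynomial. Because $V_{\K}$ is not explicit in general, this step cannot be carried out by direct calculation; it rests on the abstract structural properties of $V_{\K}$ — positivity, preservation of the degree of homogeneity, and the intertwining relations $T_jV_{\K}=V_{\K}\pa_{x_j}$ and $\Dk V_{\K}=V_{\K}\D$ — together with the matching of the single scalar $b_{k,\la}$. Once this is granted, the reduction to polynomials and the orthogonality bookkeeping above are routine.
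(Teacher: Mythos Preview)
The paper does not supply its own proof of this theorem: it is quoted with the references \cite[Theorem 7.2.7]{DX} and \cite[Theorem 5.3.4]{DUX} and then used as a tool. So there is no in-paper argument to compare against; the relevant comparison is with the proofs in those sources.

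Your strategy --- reduce by density to $f=P_k^{\la}$, then invoke the closed form of the reproducing kernel $Z_k^{\K}$ via the intertwining operator --- is exactly the route taken in Dunkl--Xu. In fact, the identity you call the ``addition formula'' is precisely \cite[Theorem 7.2.6]{DX} (equivalently the version in \cite{DUX}), and the passage from it to Funk--Hecke by orthogonality and the reproducing property is the standard one. So your outline is sound and essentially matches the cited proofs.

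Two points deserve tightening. First, you should be explicit that you are not arguing circularly: in \cite{DX,DUX} the reproducing-kernel formula $Z_k^{\K}(x',y')=b_{k,\la}\,V_{\K}[P_k^{\la}(\La x',\cdot\Ra)](y')$ is established \emph{before} Funk--Hecke, from the intertwining relation and the classical zonal harmonic; since you lean on it as a black box, cite that result rather than merely describing it as ``the substance of the addition formula.'' Second, the constant check $b_{m,\la}^{-1}=\Lambda_m^{\K}(P_m^{\la})$ is asserted but not really carried out. The cleanest way is to integrate the reproducing-kernel identity against $h_{\K}\,d\si$ in $y'$ and use $V_{\K}1=1$ together with the trace $\int_{\sd}Z_m^{\K}(x',x')h_{\K}(x')\,d\si(x')=\dim\Hc_m^d$, matching this against the known $L^2$-norm of $P_m^{\la}$ with the ultraspherical weight; alternatively, just quote the explicit constant from \cite[Theorem 7.2.6]{DX}. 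As written, ``follows from the standard Gegenbauer normalization and the beta integral'' skips the step that actually pins down $b_{m,\la}$.
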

By applying Theorem \ref{Th:FunkHeckeh} to  the function $f(t) =e^{rst}$, $r, s \geq 0$,
 and using  the fact $V_{\K}f(\La x', y'\Ra) = E_{\K}(rx',
  sy')$, we immediately obtain the following.
  \begin{cor}[Funk--Hecke for Dunkl kernel]
  \label{eq:Hecke}
  Let $\la$ be as in \eqref{eq:lambda}. Then for every $Y_{m,j}^h \in \Hc_{m}^d$,
 \begin{multline*}
 \int_{\sd}E_{\K}(rx',
 sy')Y_{m,j}^h(y')h_{\K}(y')\,d\si(y') \\
 =\frac{\om_d^{\K}\Gm\big (\frac{d}{2}+\gm\big )}{\sqrt{\pi}\Gm\big (\frac{d-1}{2}+\gm\big )}
 \bigg(\int_{-1}^{1}e^{rsu}P_m^{\la}(u)(1-u^2)^{\la-\frac12}\,du\bigg)
 Y_{m,j}^h(x').
 \end{multline*}

  \end{cor}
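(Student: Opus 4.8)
The plan is to apply Theorem \ref{Th:FunkHeckeh} directly with the choice $f(t)=e^{rst}$ and then to identify the resulting constant $\Lambda_m^{\K}(f)$ in terms of Gamma functions; since the corollary is advertised as an immediate consequence, the argument reduces to a careful substitution together with the intertwining property of $V_{\K}$. First I would fix $r,s\ge 0$ and set $f(t)=e^{rst}$, which is continuous on $[-1,1]$, so that the hypotheses of Theorem \ref{Th:FunkHeckeh} are met for every $Y_{m,j}^h\in\Hc_{m}^d$. The key point to verify is the intertwining identity $V_{\K}f(\La x',y'\Ra)=E_{\K}(rx',sy')$. This follows from the defining property $E_{\K}(x,y)=V_{\K}e^{\La\cdot,y\Ra}(x)$ of the Dunkl kernel: with $x=rx'$ and $y=sy'$ one has $e^{\La x,y\Ra}=e^{rs\La x',y'\Ra}=f(\La x',y'\Ra)$, and applying $V_{\K}$ in the first variable and evaluating at $rx'$ yields $E_{\K}(rx',sy')$. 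With this identity the left-hand side of the Funk--Hecke formula becomes exactly $\int_{\sd}E_{\K}(rx',sy')Y_{m,j}^h(y')h_{\K}(y')\,d\si(y')$.

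Next I would compute the constant produced on the right. By its definition in Theorem \ref{Th:FunkHeckeh},
$$
\Lambda_m^{\K}(f)=\frac{\om_d^{\K}\Gm(\la+1)}{\sqrt{\pi}\,\Gm(\la+1/2)}\int_{-1}^1 e^{rsu}P_m^{\la}(u)(1-u^2)^{\la-1/2}\,du,
$$
which already displays the integral appearing on the right-hand side of the corollary. It then remains only to rewrite the Gamma factors: using $\la=\frac{d}{2}+\gm-1$ from \eqref{eq:lambda} we have $\la+1=\frac{d}{2}+\gm$ and $\la+\frac12=\frac{d-1}{2}+\gm$, so the prefactor equals $\frac{\om_d^{\K}\Gm(\frac{d}{2}+\gm)}{\sqrt{\pi}\,\Gm(\frac{d-1}{2}+\gm)}$, as claimed. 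Combining this with the rewritten left-hand side gives the stated formula.

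The computation is entirely routine; the only genuine point requiring care is the justification of the intertwining identity $V_{\K}f(\La x',y'\Ra)=E_{\K}(rx',sy')$, namely making precise in which variable $V_{\K}$ acts and why its linearity together with the homogeneity of the exponential lets us absorb the scalars $r$ and $s$ into the arguments $rx'$ and $sy'$. Once this is granted, the corollary follows by direct substitution into Theorem \ref{Th:FunkHeckeh} and the Gamma-function identifications above.
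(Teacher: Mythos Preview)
Your proposal is correct and follows exactly the approach the paper takes: apply Theorem \ref{Th:FunkHeckeh} with $f(t)=e^{rst}$, use the intertwining relation $V_{\K}f(\La x',\cdot\Ra)(y')=E_{\K}(rx',sy')$, and rewrite the prefactor via $\la+1=\frac{d}{2}+\gm$ and $\la+\frac12=\frac{d-1}{2}+\gm$. The only minor remark is that in Theorem \ref{Th:FunkHeckeh} the operator $V_{\K}$ acts in the $y'$-variable, so your justification should proceed by writing $f(\La x',y'\Ra)=e^{\La rsx',y'\Ra}$, applying $V_{\K}$ in $y'$ to get $E_{\K}(y',rsx')$, and then invoking the symmetry and scaling property $E_{\K}(rsx',y')=E_{\K}(rx',sy')$ of the Dunkl kernel; this is exactly the point you flagged as requiring care.
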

The following identity was proved in \cite[Lemma 7.2]{BRT}.
\begin{lem}[\cite{BRT}, Lemma 7.2]
\label{eq:UltraBessel}
Let $z\in \C$ and $\la>-\frac{1}{2}$. Then the following holds
\begin{equation*}
\int_{-1}^{1}e^{zu}P_m^{\la}(u)(1-u^2)^{\la-1/2}\,du =\sqrt{\pi}\Gm (\la+1/2 )(z/2 )^{-\la}I_{\la+m}(z), \quad  m=0,1,\ldots.
\end{equation*}
\end{lem}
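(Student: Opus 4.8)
The plan is to reduce the integral to the classical Poisson integral representation of the modified Bessel function by means of a Rodrigues formula for the ultraspherical polynomials. Since both sides of the claimed identity are entire functions of $z$ (the left side is visibly entire, while $(z/2)^{-\la}I_{\la+m}(z)=\sum_{j\ge0}(z/2)^{m+2j}/(j!\,\Gm(j+\la+m+1))$ is an even power series times $z^m$), it suffices to establish the identity for $z>0$ and then invoke analytic continuation.

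First I would recall the Rodrigues formula for the normalized polynomials $P_m^{\la}$, normalized so that $P_m^{\la}(1)=1$. Writing $P_m^{\la}=C_m^{\la}/C_m^{\la}(1)$ in terms of the Gegenbauer polynomials and using their standard Rodrigues representation, one checks after cancellation that
$$
P_m^{\la}(u)(1-u^2)^{\la-1/2}=\frac{(-1)^m\Gm(\la+1/2)}{2^m\Gm(m+\la+1/2)}\frac{d^m}{du^m}\big[(1-u^2)^{m+\la-1/2}\big].
$$
Next I would substitute this into the left-hand integral and integrate by parts $m$ times. Because $\la>-1/2$, the exponent $m+\la-1/2$ exceeds $m-1$, so $(1-u^2)^{m+\la-1/2}$ together with its derivatives of order up to $m-1$ carries a positive power of $(1-u^2)$ and hence vanishes at $u=\pm1$; thus all boundary terms drop out. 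Each integration by parts transfers one derivative onto $e^{zu}$, producing a factor $z$, so after $m$ steps one arrives at
$$
\int_{-1}^{1}e^{zu}P_m^{\la}(u)(1-u^2)^{\la-1/2}\,du=\frac{\Gm(\la+1/2)}{2^m\Gm(m+\la+1/2)}\,z^m\int_{-1}^{1}e^{zu}(1-u^2)^{m+\la-1/2}\,du.
$$

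Finally I would apply the Poisson representation $\int_{-1}^{1}e^{zu}(1-u^2)^{\nu-1/2}\,du=\sqrt{\pi}\,\Gm(\nu+1/2)(z/2)^{-\nu}I_{\nu}(z)$, valid for $\Re\nu>-1/2$, with $\nu=m+\la$, and simplify. The factors $\Gm(m+\la+1/2)$ cancel, while the powers combine as $z^m(z/2)^{-(m+\la)}=2^m(z/2)^{-\la}$, cancelling the $2^m$ in the denominator and leaving exactly $\sqrt{\pi}\,\Gm(\la+1/2)(z/2)^{-\la}I_{\la+m}(z)$, as claimed.

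The step that requires the most care is the constant bookkeeping: one must match the Rodrigues normalization constant against the convention $P_m^{\la}(1)=1$ (equivalently, correctly convert between $C_m^{\la}$ and $P_m^{\la}$), and then track the interplay of the powers of $2$, the powers of $z$, and the Gamma factors so that everything telescopes to the stated form. The analytic and convergence issues (vanishing of boundary terms and validity of the Bessel representation) are routine once $\la>-1/2$ is used.
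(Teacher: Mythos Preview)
Your proof is correct. The paper itself does not give a proof of this lemma; it simply records the identity and cites \cite{BRT}, Lemma~7.2. So there is no in-paper argument to compare against, and your Rodrigues-formula-plus-Poisson-integral approach is a clean, self-contained derivation. The constant bookkeeping you flagged as the delicate point is handled correctly: the Rodrigues constant for $P_m^{\la}=C_m^{\la}/C_m^{\la}(1)$ really does collapse to $(-1)^m\Gm(\la+1/2)/(2^m\Gm(m+\la+1/2))$, the $m$ integrations by parts produce exactly $(-1)^m z^m$, and the Poisson representation with $\nu=m+\la$ then gives the stated right-hand side after the telescoping of $2^m$ and $\Gm(m+\la+1/2)$. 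The analytic continuation from $z>0$ to $z\in\C$ is also justified as you say, since both sides are entire in $z$.
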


\subsection{The Dunkl--Hermite operator and the heat semigroup}

Let us introduce the framework of the Dunkl--Hermite operator (which is also known as the Dunkl-harmonic oscillator)
 \begin{equation}
 \label{DHO}
 H_{\K}:=-\D_{\K}+|x|^2,
 \end{equation}
 where $\D_{\K}$ stands for the Dunkl--Laplacian in $\R^d$ \eqref{eq:DunklLap}.  The parameter $\K$ is the multiplicity function defined in \eqref{eq:k}. When $\K\equiv 0$, the operator $H_{\K}$ becomes the classical harmonic oscillator $-\Delta+|x|^2$. The study of $H_{\K}$ was initiated by M. R\"osler \cite{RO, RO2}.

For each $\mu\in \Na^d$, we consider the \textit{generalized Hermite functions} (or \textit{Dunkl--Hermite functions}) $\Phi_{\mu,\K}$. The system $\{\Phi_{\mu,\K}:\mu\in \Na^d\}$ is orthonormal and complete in $L^2(\R^d,h_{\K}(x))$, cf. \cite[Corollary 3.5 (ii)]{RO}. Furthermore, $\Phi_{\mu,\K}$ are eigenfuntions of the operator \eqref{DHO} with eigenvalues $(2|\mu|+d+2\gamma)$, that is
\begin{equation}
\label{eq:eigenvalues}
H_{\K}\Phi_{\mu,\K}=(2|\mu|+d+2\gamma)\Phi_{\mu,\K},
\end{equation}
where $\gamma$ is as in \eqref{gamma}, and $|\mu|=\sum_{\ell=1}^d\mu_{\ell}$. For $\K\equiv0$, $\Phi_{\mu,0}$ become the usual Hermite functions, see \cite[p. 521]{RO}. Precise definitions and detailed description on results concerning generalized Dunkl--Hermite functions can be found in \cite{RO}.

For a function $f \in L^2(\R^d, h_{\K})$ we have the orthogonal expansion,
\begin{equation*}
f=\sum_{n=0}^{\infty}P_{n,\K}f,
\end{equation*}
which converges in $L^2(\R^d, h_{\K})$.
Here the spectral projections are given by
\begin{equation*}
P_{n,\K}f= \sum_{|\mu|= n}\langle f, \Phi_{\mu,\K}\rangle_{\K} \Phi_{\mu,\K}
\end{equation*}
with $\langle\cdot, \cdot\rangle_{\K}$ standing for the inner product in $L^2(\R^d, h_{\K}\,dx)$.  More precisely, $P_{n,\K}$ is the orthogonal projection associated to the eigenspace corresponding to the eigenvalue $(2n+d+2\gamma)$ of $H_{\K}$.

We state and prove the following theorem, that relates the projections $P_{n,\K}$ with the Laguerre convolution in Section \ref{sec:HardyLaguerre}. Such kind of an identity is called  \textit{Hecke--Bochner} identity.

\begin{thm}[Hecke--Bochner for Dunkl--Hermite projections]
\label{HB-DHProjections}
Let $f(x)=f_0(|x|)Y_{m}^h(x')$ where $Y_m^{h}(x')$ is a $h-$harmonic of degree $m$. Then one has $P_{2n+m,\K}f(x)=
F_n(|x|)Y_{m}^h(x')$ where
$$
F_n(r)=\frac{2}{\Gamma(\al+1)}r^m\big[\big((\cdot)^{-m}f_0(\cdot)
\big)\ast_{\la+m}\varphi_n^{\la+m}\big](r).
$$
with $\la$ as in \eqref{eq:lambda}. For other values of $\ell$, $P_{\ell,\K}f = 0$.
\end{thm}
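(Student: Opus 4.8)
The plan is to route the computation through the Dunkl--Hermite heat semigroup $e^{-tH_\K}$ and to recover the spectral projections by matching exponentials in $t$. Recall from \eqref{eq:eigenvalues} that $e^{-tH_\K}=\sum_{\ell\ge0}e^{-t(2\ell+d+2\gm)}P_{\ell,\K}$, so it suffices to produce a closed form for $e^{-tH_\K}f$ and read off the coefficient of each $e^{-t(2\ell+d+2\gm)}$. For $f(x)=f_0(|x|)Y_m^h(x')$ I would write $e^{-tH_\K}f(x)=\int_{\R^d}M_t(x,y)f(y)h_\K(y)\,dy$, where $M_t$ is the Dunkl--Hermite (Mehler) kernel; by the work of R\"osler \cite{RO} this kernel has the form $M_t(x,y)=c_t\,e^{-\frac12\coth(2t)(|x|^2+|y|^2)}E_\K\big(\tfrac{x}{\sinh 2t},y\big)$, with $c_t$ an explicit constant involving $(\sinh 2t)^{-(\la+1)}$ and the Mehta-type constant $c_\K$. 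Writing $y=sy'$ and separating radial from spherical variables via the integration formula of Subsection \ref{subsec:generalDunkl}, the angular integral that appears is exactly $\int_{\sd}E_\K\big(\tfrac{r}{\sinh 2t}x',sy'\big)Y_m^h(y')h_\K(y')\,d\si(y')$.

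The key computational step is to evaluate this angular integral with the Funk--Hecke formula for the Dunkl kernel. Corollary \ref{eq:Hecke} reproduces $Y_m^h(x')$ and reduces the integral to $\int_{-1}^1 e^{zu}P_m^{\la}(u)(1-u^2)^{\la-1/2}\,du$ with $z=rs/\sinh 2t$; Lemma \ref{eq:UltraBessel} then converts this into a modified Bessel function $I_{\la+m}$ of the shifted order $\la+m$. After collecting the Gamma factors (here $d/2+\gm=\la+1$ and $(d-1)/2+\gm=\la+1/2$, so the constants telescope), the angular integral becomes a constant multiple of $(rs)^{-\la}I_{\la+m}(rs/\sinh 2t)\,Y_m^h(x')$. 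Comparing this with the explicit Laguerre translation kernel of type $\la+m$ in Lemma \ref{lem:Lagtranskernel}, namely $\mathcal T_r^{\la+m}q_{t,\la+m}(s)=(rs)^{-(\la+m)}(\sinh 2t)^{-1}e^{-\frac{\coth 2t}{2}(r^2+s^2)}I_{\la+m}(rs/\sinh 2t)$, the surplus power $(rs)^m$ is absorbed by writing $f_0(s)=s^m\big(s^{-m}f_0(s)\big)$ and pulling out $r^m$; the radial measure $s^{d+2\gm-1}\,ds=s^{2\la+1}\,ds$ combines with $s^m$ to reconstruct $d\mu_{\la+m}(s)=s^{2(\la+m)+1}\,ds$. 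The upshot is the identity
$$
e^{-tH_\K}f(x)=r^m\,Y_m^h(x')\,\big[e^{-tL_{\la+m}}\big((\cdot)^{-m}f_0\big)\big](r),
$$
where $e^{-tL_{\la+m}}=(\cdot)\ast_{\la+m}q_{t,\la+m}$ is the Laguerre heat semigroup of type $\la+m$ from \eqref{eq:heatsemigroupLag}.

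With this closed form in hand I would finish by expanding both sides in $t$. On the left, $e^{-tH_\K}f=\sum_{\ell}e^{-t(2\ell+d+2\gm)}P_{\ell,\K}f$, while on the right, applying \eqref{eq:heatsemigroupLag} for type $\la+m$, $e^{-tL_{\la+m}}\big((\cdot)^{-m}f_0\big)=\frac{2}{\Gamma(\la+m+1)}\sum_{n\ge0}e^{-t(4n+2(\la+m)+2)}\big((\cdot)^{-m}f_0\big)\ast_{\la+m}\varphi_n^{\la+m}$. The crucial eigenvalue bookkeeping is that $4n+2(\la+m)+2=2(2n+m)+d+2\gm$, by the definition \eqref{eq:lambda} of $\la$, so the right-hand exponents are precisely the $H_\K$-eigenvalue exponents indexed by $\ell=2n+m$. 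Since the numbers $2(2n+m)+d+2\gm$ are pairwise distinct, the functions $t\mapsto e^{-t(\cdots)}$ are linearly independent, and matching coefficients yields $P_{2n+m,\K}f(x)=F_n(|x|)Y_m^h(x')$ with $F_n$ as claimed, together with $P_{\ell,\K}f=0$ whenever $\ell$ is not of the form $2n+m$ (that is, $\ell<m$ or $\ell\not\equiv m \pmod 2$).

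I expect the main obstacle to be the angular/radial reduction of the second paragraph: correctly invoking the Mehler kernel for $H_\K$ and, above all, tracking the various normalizing constants ($c_t$, the Mehta constant $c_\K$, $\om_d^\K$, and the Gamma factors issued by Corollary \ref{eq:Hecke} and Lemma \ref{eq:UltraBessel}) so that they collapse to the single factor $\frac{2}{\Gamma(\la+m+1)}$ appearing in $F_n$. A secondary technical point is justifying the interchange of summation and integration and the termwise matching of the two $t$-expansions, which is routine given the rapid decay of the heat kernel but should be recorded explicitly.
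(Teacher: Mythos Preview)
Your proposal is correct and follows essentially the same route as the paper. The only cosmetic difference is the parametrization: the paper works with the Mehler generating function \eqref{eq:MehlerGen1} in the variable $w$ and matches powers $w^{2n+m}$ (via the Laguerre generating identity \eqref{eq:generating}), whereas you work with the heat semigroup in $t$ and match exponentials $e^{-t(2\ell+d+2\gm)}$ (via Lemma \ref{lem:Lagtranskernel}); the two are interchangeable through $w=e^{-2t}$, and both rely on the same two key identities, Corollary \ref{eq:Hecke} and Lemma \ref{eq:UltraBessel}. Your reading of the constant as $\frac{2}{\Gamma(\la+m+1)}$ is the intended one (the paper's $\al$ in the statement is $\la+m$).
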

\begin{proof}
The proof is similar to the proof of \cite[Theorem 3.4.1]{ST}.
Here we need to use the \textit{Mehler's formula for the Dunkl--Hermite functions} (see \cite[Theorem 3.12]{RO}): for $|w|<1$, one has
 \begin{equation}
 \label{eq:MehlerGen1}
 \sum_{\mu \in \mathbb{N}^d}\Phi_{\mu,\K}(x)\Phi_{\mu,\K}(y)w^{|\mu |}
 =\frac{2}{\om_d^{\K}\,\Gm(\la+1 )}(1-w^2)^{-(\la+1)}\exp\Big\{-\frac{1}{2}\Big (\frac{1+w^2}{1-w^2}\Big )
 (|x|^2+|y|^2)\Big\}E_{\K}\Big (\frac{2wx}{1-w^2}, y \Big).
 \end{equation}
We provide some details of the proof. Taking into account \eqref{eq:MehlerGen1}, if follows that
\begin{align*}
\sum_{n=0}^{\infty}&P_{n,\K}f(x)w^n=\frac{2}{\om_d^{\K}\,\Gm(\la+1 )}(1-w^2)^{-(\la+1)}\\
 &\qquad \qquad \times\int_{\R^d}\exp\Big\{-\frac{1}{2}\Big (\frac{1+w^2}{1-w^2}\Big )
 (|x|^2+|y|^2)\Big\}
 E_{\K}\Big (\frac{2wx}{1-w^2}, y \Big)f(y)h_{\K}(y)\,dy\\
 &=\frac{2}{\om_d^{\K}\,\Gm(\la+1 )}(1-w^2)^{-(\la+1)}\exp\Big\{-\frac{1}{2}\Big (\frac{1+w^2}{1-w^2}\Big )
 r^2\Big\}\\
 &\qquad \times\int_{0}^{\infty}\Big(\int_{\sd}E_{\K}\Big (\frac{2wrx'}{1-w^2}, sy' \Big)Y_m^h(y')h_{\K}(y')\,d\si(y')\Big)\exp\Big\{-\frac{1}{2}\Big (\frac{1+w^2}{1-w^2}\Big )
 s^2\Big\}f_0(s)s^{2\la+1}\,ds\\
 &=\frac{2}{\sqrt{\pi}\Gm (\la+1/2 )}(1-w^2)^{-(\la+1)}\exp\Big\{-\frac{1}{2}\Big (\frac{1+w^2}{1-w^2}\Big )
 r^2\Big\}Y_m^h(x')\\
 &\qquad \times\int_{0}^{\infty}\Big(\int_{-1}^1\exp\Big\{\frac{2wrs}{1-w^2}u\Big\}P_m^{\la}(u)(1-u^2)^{\la-\frac12}\,du\Big)\exp\Big\{-\frac{1}{2}\Big (\frac{1+w^2}{1-w^2}\Big )
 s^2\Big\}f_0(s)s^{2\la+1}\,ds
 \end{align*}
 where the last equality is true in view of the Funk--Hecke formula in Corollary \ref{eq:Hecke}. In its turn, by Lemma \ref{eq:UltraBessel}, this last expression equals
\begin{align*}
2&(1-w^2)^{-1}(wr)^{-\la}\exp\Big\{-\frac{1}{2}\Big (\frac{1+w^2}{1-w^2}\Big )
 r^2\Big\}Y_m^h(x')\\
 &\qquad \times\int_0^{\infty}\exp\Big\{-\frac{1}{2}\Big (\frac{1+w^2}{1-w^2}\Big )
 s^2\Big\}f_0(s)I_{\la+m}\Big(\frac{2wrs}{1-w^2}\Big)s^{\la+1}\,ds\\
 &=2(wr)^{m}Y_m^h(x')\\
 &\qquad \times\int_0^{\infty}(1-w^2)^{-1}(rsw)^{-\la-m}\exp\Big\{-\frac{1}{2}\Big (\frac{1+w^2}{1-w^2}\Big )
 (r^2+s^2)\Big\}I_{\la+m}\Big(\frac{2wrs}{1-w^2}\Big)f_0(s)s^{2\la+1+m}\,ds.
\end{align*}
Now, using the generating function identity for $\varphi_n^{\delta}(r)$ given in \eqref{eq:generating},
we have that (by using also the identity \cite[(6.1.28)]{ST} for the Laguerre translation in the second equality below)
\begin{align*}
\sum_{n=0}^{\infty}P_{n,\K}f(x)w^n&=2\sum_{n=0}^{\infty}w^{2n+m}r^m
\int_0^{\infty}\frac{\Gamma(n+1)}{\Gamma(n+\la+m+1)}
\varphi_n^{\la+m}(r)\varphi_n^{\la+m}(s)f_0(s)s^{2\la+1+m}\,ds~~Y_m^h(x')\\
&=\frac{2}{\Gamma(\al+1)}\sum_{n=0}^{\infty}w^{2n+m}r^m\int_0^{\infty}
\mathcal{T}_r^{\la+m}\varphi_n^{\la+m}(s)f_0(s)s^{2\la+1+m}\,ds~~Y_m^h(x')\\
&=\frac{2}{\Gamma(\al+1)}\sum_{n=0}^{\infty}w^{2n+m}r^m\int_0^{\infty}
\mathcal{T}_r^{\la+m}\varphi_n^{\la+m}(s)s^{-m}f_0(s)s^{2\la+2m+1}\,ds~~Y_m^h(x')\\
&=\frac{2}{\Gamma(\al+1)}\sum_{n=0}^{\infty}w^{2n+m}r^m\big[\big((\cdot)^{-m}f_0(\cdot)
\big)\ast_{\la+m}\varphi_n^{\la+m}\big](r)~~Y_m^h(x').
\end{align*}
The conclusion follows by comparing coefficients on both sides of the equality.
\end{proof}

The solution to the heat equation associated to the Dunkl--Hermite operator, i.e.
$$
\frac{\partial}{\partial t}u(x,t)=-H_{\K}u(x,t), \quad u(x,0)=f(x), \quad t>0, \, x\in \R^d,
$$
is given by $u(x,t)=:e^{-tH_{\K}}f(x)=:T_t^{\K}f(x)$, where $ T_t^{\K},~ t\ge 0$ is the Dunkl-Hermite semigroup  generated by $H_{\K},$ see \cite{A, RO} . In terms of the spectral decomposition,
$$
T_t^{\K}f=\sum_{n=0}^{\infty}e^{-t(2n+d+2\gamma)}P_{n,\K}f, \quad f\in L^2(\R^d,h_{\K}\,dx).
$$
The following result relates the  Dunkl-Hermite semigroup and the Laguerre  heat semigroup.

\begin{thm}
\label{thm:relatingheats}
Let $\la$ be as in \eqref{eq:lambda}. For $x\in \R^d$, $x=rx'$, with  $r\in \R^+$ and $x'\in \sd$, then
\begin{equation*}
T_t^{\K}f(x)=\sum_{m=0}^\infty\sum_{j=1}^{d(m)}r^mT_{\la+m,t}((\cdot)^{-m}f_{m,j})(r) Y_{m,j}^h(x').
\end{equation*}
\end{thm}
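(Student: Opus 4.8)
The plan is to combine the spectral definition $T_t^\K f=\sum_{n=0}^\infty e^{-t(2n+d+2\gamma)}P_{n,\K}f$ with the Hecke--Bochner identity of Theorem \ref{HB-DHProjections}, matching eigenvalues sector by sector so that the Dunkl--Hermite heat semigroup decomposes into Laguerre heat semigroups across the $h$-harmonic components. First I would expand $f$ in $h$-harmonics via \eqref{eq:hexpansion}, writing $f(rx')=\sum_{m,j}f_{m,j}(r)Y_{m,j}^h(x')$, and insert this into the spectral formula for $T_t^\K$. By linearity it suffices to treat a single term $f_{m,j}(r)Y_{m,j}^h(x')$, which has the form $f_0(|x|)Y_m^h(x')$ to which Theorem \ref{HB-DHProjections} applies directly. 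That theorem tells us $P_{\ell,\K}$ annihilates this term unless $\ell=2n+m$ for some $n\ge0$, and in the surviving cases
$$
P_{2n+m,\K}\big(f_{m,j}Y_{m,j}^h\big)(x)=\frac{2}{\Gamma(\la+m+1)}\,r^m\big[\big((\cdot)^{-m}f_{m,j}\big)\ast_{\la+m}\varphi_n^{\la+m}\big](r)\,Y_{m,j}^h(x').
$$

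The key step is the matching of eigenvalues. Using $\la=\frac{d}{2}+\gamma-1$ from \eqref{eq:lambda}, the Dunkl--Hermite eigenvalue attached to $P_{2n+m,\K}$ is
$$
2(2n+m)+d+2\gamma=4n+2(\la+m)+2,
$$
which is exactly the Laguerre eigenvalue $(4n+2\al+2)$ of $\varphi_n^\al$ with parameter $\al=\la+m$. Hence the Dunkl--Hermite decay factor $e^{-t(2(2n+m)+d+2\gamma)}$ coincides with the Laguerre decay factor $e^{-t(4n+2(\la+m)+2)}$ appearing in \eqref{eq:heatsemigroupLag}. Summing over $n$ and recognizing the definition of the Laguerre heat semigroup $T_{\la+m,t}$ acting on $(\cdot)^{-m}f_{m,j}$ gives, for each fixed $(m,j)$,
$$
\sum_{n=0}^\infty e^{-t(2(2n+m)+d+2\gamma)}P_{2n+m,\K}\big(f_{m,j}Y_{m,j}^h\big)(x)=r^m\,T_{\la+m,t}\big((\cdot)^{-m}f_{m,j}\big)(r)\,Y_{m,j}^h(x').
$$
Summing over $m$ and $j$ then yields the claimed formula.

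The main obstacle is not algebraic but analytic: justifying the interchange of the sums over $n$ and over $(m,j)$, and the convergence of the resulting series in $L^2(\R^d,h_\K\,dx)$. I would handle this by working with $f\in L^2(\R^d,h_\K)$, using that both the $h$-harmonic expansion and the spectral-projection expansion converge in $L^2$ and that distinct sectors $\Hc_m^d$ are mutually orthogonal, so no cross terms arise; since $T_t^\K$ is a bounded (indeed contractive) operator it may be applied term by term. A minor bookkeeping point is to reconcile the normalization constant $\frac{2}{\Gamma(\al+1)}$ written in Theorem \ref{HB-DHProjections} with the parameter $\la+m$ that actually governs the convolution, i.e.\ to read it as $\frac{2}{\Gamma(\la+m+1)}$, which is precisely the constant entering $T_{\la+m,t}$ in \eqref{eq:heatsemigroupLag}; with this identification the inner sum over $n$ reassembles exactly into the Laguerre semigroup.
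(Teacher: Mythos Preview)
Your proposal is correct and follows essentially the same approach as the paper: expand $f$ in $h$-harmonics, apply the Hecke--Bochner identity (Theorem~\ref{HB-DHProjections}) term by term to the spectral definition of $T_t^{\K}$, and use the eigenvalue match $2(2n+m)+d+2\gamma=4n+2(\la+m)+2$ to reassemble the inner sum as $T_{\la+m,t}$. The paper's proof is terser and does not spell out the analytic justification or the normalization-constant bookkeeping you mention, but the underlying argument is the same.
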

\begin{proof}
We have just to apply Theorem \ref{HB-DHProjections} to the spectral definition of $T_t^{\K}f(x)$. Indeed,
\begin{align*}
T_t^{\K}f(x)&=\sum_{n=0}^{\infty}e^{-t(2n+d+2\gamma)}P_{n,\K}f(x)\\
&=\sum_{n=0}^{\infty}e^{-t(2n+d+2\gamma)}P_{n,\K}\Big(\sum_{m,j}f_{m,j}(r)Y_{m,j}^h(x')\Big)\\
&=\frac{2}{\Gamma(\al+1)}\sum_{m,j}Y_{m,j}^h(x')\sum_{\ell=0}^{\infty}e^{-t(4\ell+2m+d+2\gamma)}r^m\big[\big((\cdot)^{-m}f(\cdot)
\big)\ast_{\la+m}\varphi_{\ell}^{\la+m}\big](r)\\
&=\sum_{m,j}r^mT_{\la+m,t}((\cdot)^{-m}f_{m,j})(r) Y_{m,j}^h(x').
\end{align*}
We have proved the theorem.
\end{proof}

\subsection{The Hardy inequality for $\mathbf{H}_{\K,\si}$: Proof of Theorem \ref{thm:HardyDunkl-Hermite}}
\label{subse:HardyHK}

Given $0<\si<1$, we define $\mathbf{H}_\si$ to be the operator
$$
\mathbf{H}_{\K,\si}:=\frac{\Gamma\big(\frac{H_{\K}}{2}+\frac{1+\si}{2}\big)}
{\Gamma\big(\frac{H_{\K}}{2}+\frac{1-\si}{2}\big)},
$$
so, in view of \eqref{eq:eigenvalues}, $\mathbf{H}_{\K,\si}$ corresponds to the spectral multiplier $
\frac{\Gamma\big(\frac{2|\mu|+d+2\gamma}{2}+\frac{1+\si}{2}\big)}
{\Gamma\big(\frac{2|\mu|+d+2\gamma}{2}+\frac{1-\si}{2}\big)}$. Then by Lemma~\ref{lem:numerical1} we have
\begin{align*}
\mathbf{H}_{\K,\si}f(x)&=\sum_{j=0}^{\infty}
\frac{\Gamma\big(\frac{2j+d+2\gamma}{2}+\frac{1+\si}{2}\big)}
{\Gamma\big(\frac{2j+d+2\gamma}{2}+\frac{1-\si}{2}\big)}P_{j,\K}f(x)\\
&=E_{\si}f(x)+\frac{2^{\si}}{|\Gamma(-\si)|}\int_{0}^{\infty}\big(f(x)-T_t^{\K}f(x)\big)
(\sinh t)^{-\si-1}\,dt.
\end{align*}
By Theorem \ref{thm:relatingheats} and the expansion of $f$ into $h$-harmonics \eqref{eq:hexpansion}, we can write
\begin{align*}
\mathbf{H}_{\K,\si} f(x)&=E_{\si}f(x)+\frac{2^{\si}}{|\Gamma(-\si)|}
\int_{0}^{\infty}\big(f(x)-T_t^{\K}f(x)\big)
(\sinh t)^{-\si-1}\,dt\\
&=\sum_{m,j}Y_{m,j}^h(x')\Big[E_{\si}f_{m,j}(r)\\
&\qquad +\frac{2^{\si}}{|\Gamma(-\si)|}
\int_{0}^{\infty}\big(f_{m,j}(r)-r^mT_{\la+m,t}((\cdot)^{-m}f_{m,j})(r)\big)
(\sinh t)^{-\si-1}\,dt\Big]\\
&=\sum_{m,j}Y_{m,j}^h(x')r^m\Big[E_{\si}r^{-m}f_{m,j}(r)\\
&\qquad  +\frac{2^{\si}}{|\Gamma(-\si)|}
\int_{0}^{\infty}\big(r^{-m}f_{m,j}(r)-T_{\la+m,t}((\cdot)^{-m}f_{m,j})(r)\big)
(\sinh t)^{-\si-1}\,dt\Big]\\
&=\sum_{m,j}Y_{m,j}^h(x')r^m\Big[E_{\si}g_{m,j}(r)
+\frac{2^{\si}}{|\Gamma(-\si)|}
\int_{0}^{\infty}\big(g_{m,j}(r)-T_{\la+m,t}g_{m,j}(r)\big)
(\sinh t)^{-\si-1}\,dt\Big]\\
&=\sum_{m,j}Y_{m,j}^h(x')r^m\Big[g_{m,j}(r)
\Big(E_{\si}+\frac{2^{\si}}{|\Gamma(-\si)|}F_{\la+m,\si}(r)\Big)\\
&\qquad+\frac{2^{\si}}{|\Gamma(-\si)|}
\int_{0}^{\infty}\big(g_{m,j}(r)-g_{m,j}(s)\big)\mathcal{K}_{\la+m,\si}(r,s)
\,d\mu_{\la+m}(s)\Big]\\
&=\sum_{m,j}Y_{m,j}^h(x')r^mL_{\la+m,\si}g_{m,j}(r),
\end{align*}
where we $g_{m,j}(r)=r^{-m}f_{m,j}(r)$ and $L_{\la+m,\si}$ is the modified fractional Laguerre operator defined spectrally in \eqref{eq:spectralLsi}.
With this, by Theorem \ref{thm:HardyLaguerre}, we have
\begin{align*}
\langle \mathbf{H}_{\K,\si} f,f\rangle_{L^2(\R^d,h_{\K}) }&=\sum_{m=0}^\infty\sum_{j=1}^{d(m)}\langle L_{\la+m,\si} g_{m,j},g_{m,j}\rangle_{L^2((0,\infty),d\,\mu_{\la+m}(r)) }\\
&\ge \sum_{m=0}^\infty\sum_{j=1}^{d(m)}A_{\la+m,\si}^{\dl}\int_{0}^{\infty}|g_{m,j}(r)|^2
\frac{w_{\la+m,\si}^{\dl}(r)}{w_{\la+m,-\si}^{\dl}(r)}\,d\mu_{\la+m}(r)\\
&=\sum_{m=0}^\infty\sum_{j=1}^{d(m)}A_{\la+m,\si}^{\dl}\int_{0}^{\infty}|f_{m,j}(r)|^2
\frac{w_{\la+m,\si}^{\dl}(r)}{w_{\la+m,-\si}^{\dl}(r)}\,d\mu_{\la}(r),
\end{align*}
where $A_{\la+m,\si}^{\dl}$ is the constant in \eqref{eq:Aconstant}, i.e., $A_{\la+m,\si}^{\dl}=(4\dl)^{\si} \frac{\Gamma\big(\frac{\la+m+2+\si}{2}\big)^2}
{\Gamma\big(\frac{\la+m+2-\si}{2}\big)^2}$.
Taking into account the definition of $w_{\al,\si}^{\dl}(r)$ in \eqref{eq:weight} we get
\begin{equation}
\label{eq:int1}
A_{\la+m,\si}^{\dl}\frac{w_{\la+m,\si}^{\dl}(r)}{w_{\la+m,-\si}^{\dl}(r)}
=\dl^{\si}\frac{\Gamma\big(\frac{\la+m+2+\si}{2}\big)}
{\Gamma\big(\frac{\la+m+2-\si}{2}\big)}\frac{K_{(\la+m+2+\si)/2}\big((\dl+r^2)/2\big)}
{K_{(\la+m+2-\si)/2}\big((\dl+r^2)/2\big)}(\dl+r^2)^{-\si}.
\end{equation}
Now, as observed by D. Yafaev in \cite{Yafaev}, for $0<x\le y$ and $m\ge 0$ we have that $\frac{\Gamma(m+y)}{\Gamma(m+x)}\ge\frac{\Gamma(y)}{\Gamma(x)}$,
so
\begin{equation}
\label{eq:int2}
\dl^{\si}\frac{\Gamma\big(\frac{\la+m+2+\si}{2}\big)}
{\Gamma\big(\frac{\la+m+2-\si}{2}\big)}\ge\dl^{\si}\frac{\Gamma\big(\frac{\la+2+\si}{2}\big)}
{\Gamma\big(\frac{\la+2-\si}{2}\big)}
=B_{\la,\si}^{\dl},
\end{equation}
where $B_{\la,\si}^{\dl}$ is the constant defined in \eqref{eq:Bconstant}.
Recall again that $K_{\nu}(x)$ is an increasing function of $\nu$ for $x>0$ (see \cite[p. 226]{NU}). Therefore, with this, and putting together \eqref{eq:int1} and \eqref{eq:int2} we have
\begin{align*}
\langle \mathbf{H}_{\K,\si} f,f\rangle_{L^2(\R^d,h_{\K}\,dx) }\ge&\sum_{m,j}A_{\la+m,\si}^{\dl}\int_{0}^{\infty}|f_{m,j}(r)|^2
\frac{w_{\la+m,\si}^{\dl}(r)}{w_{\la+m,-\si}^{\dl}(r)}\,d\mu_{\la}(r)\\
&\ge B_{\la,\si}^{\dl}
\sum_{m,j}\int_{0}^{\infty}|f_{m,j}(r)|^2
(\dl+r^2)^{-\si}\,d\mu_{\la}(r)\\
&= B_{\la,\si}^{\dl}
\sum_{m,j}\langle (\dl+|\cdot|^2)^{-\si/2} f_{m,j},(\dl+|\cdot|^2)^{-\si/2}f_{m,j}\rangle_{L^2((0,\infty),d\,\mu_{\la}(r)) }\\
&= B_{\la,\si}^{\dl}
\langle (\dl+|\cdot|^2)^{-\si/2} f,(\dl+|\cdot|^2)^{-\si/2}f\rangle_{L^2(\R^d,h_{\K})}\\
&= B_{\la,\si}^{\dl}\int_{\R^d}\frac{|f(x)|^2}{(\dl+|x|^2)^{\si}}\,h_{\K}(x)\,dx.
\end{align*}
This completes the proof of Theorem \ref{thm:HardyDunkl-Hermite}.



\subsubsection*{Acknowledgments.}
This work began in the summer of 2015 when the third author visited Universidad de La Rioja, Logro\~no. He wishes to thank \'Oscar and Luz for making the visit possible and also for the warm hospitality he enjoyed during his visit.

The authors thank the anonymous referee for the very careful reading of the paper and for valuable suggestions that greatly contributed to the final form of this work.


\end{document}